\patchcmd{\@maketitle}{\LARGE \@title}{\LARGE\bfseries\@title}{}{}
\renewcommand{\@seccntformat}[1]{\csname the#1\endcsname.\quad}
\definecolor{darkblue}{rgb}{0,0,.5}
\renewcommand{\leq}{\leqslant}
\renewcommand{\geq}{\geqslant}
\renewcommand{\emptyset}{\varnothing}
\newtheorem{theorem}{Theorem}[section]
\newtheorem{lemma}[theorem]{Lemma}
\newtheorem{corollary}[theorem]{Corollary}
\newtheorem{assumption}[theorem]{Assumption}
\theoremstyle{definition}
\newtheorem{definition}[theorem]{Definition}
\theoremstyle{definition}
\theoremstyle{definition}
\newtheorem{remark}[theorem]{Remark}
\newcounter{step}[algocf]
\newcommand\step[1]{%
    \refstepcounter{step}    
    \vskip 0.25\baselineskip
    \ifx\hfuzz#1\hfuzz
        \noindent\(\triangleright\)~\textbf{Step~\arabic{step}.}%
    \else
        \noindent\(\triangleright\)~\textbf{Step~\arabic{step}}~(\texttt{#1})\textbf{.}%
    \fi
}
\newcommand{\R}{\mathbbm R}
\newcommand{\N}{\mathbbm N}
\newcommand{\Z}{\mathbbm Z}
\DeclareMathOperator{\tr}{tr}
\DeclareMathOperator{\prox}{prox}
\DeclareMathOperator{\proj}{proj}
\DeclareMathOperator{\dom}{dom}
\DeclareMathOperator{\range}{range}
\DeclareMathOperator*{\argmin}{arg\,min}
\newcommand{\bx}{\mathbf{x}}
\newcommand{\by}{\mathbf{y}}
\newcommand{\bu}{\mathbf{u}}
\newcommand{\bw}{\mathbf{w}}
\newcommand{\deltaB}{\delta_K}
\newcommand{\deltaL}{\delta_L}
\newcommand{\vertiii}[1]{{\left\vert\kern-0.25ex\left\vert\kern-0.25ex\left\vert #1 
    \right\vert\kern-0.25ex\right\vert\kern-0.25ex\right\vert}}
\renewcommand{\mu}{\rho}
\begin{document}

\title{A distributed proximal splitting method with linesearch for {locally Lipschitz gradients}}

\author{Felipe Atenas\thanks{School of Mathematics \& Statistics,
                             The University of Melbourne, Australia.
                             E-mail:~\href{href:felipe.atenas@unimelb.edu.au}{felipe.atenas@unimelb.edu.au}}
        \and
        Minh N.\ Dao\thanks{School of Science,
                            RMIT University, Australia
                            E-mail:~\href{href:minh.dao@rmit.edu.au}{minh.dao@rmit.edu.au}}
        \and
        Matthew K.\ Tam\thanks{School of Mathematics \& Statistics,
                               The University of Melbourne, Australia.
                               E-mail:~\href{href:matthew.tam@unimelb.edu.au}{matthew.tam@unimelb.edu.au}}    }

\date{\today}

\maketitle

\begin{abstract} 
In this paper, we propose a distributed first-order algorithm with backtracking linesearch for solving multi-agent minimisation problems, where each agent handles a local objective involving nonsmooth and smooth components. Unlike existing methods that require global Lipschitz continuity and predefined stepsizes, our algorithm adjusts stepsizes using distributed linesearch procedures, making it suitable for problems where global constants are unavailable or difficult to compute. The proposed algorithm is designed within an abstract linesearch framework for a primal-dual proximal-gradient method to solve min-max convex-concave problems, enabling the consensus constraint to be decoupled from the optimisation task. Our theoretical analysis allows for gradients of functions to be locally Lipschitz continuous, relaxing the prevalent assumption of globally Lipschitz continuous gradients. 
\end{abstract}

\paragraph{Keywords:}
Backtracking linesearch,
distributed optimisation,
locally Lipschitz data,
min-max problems,
primal-dual algorithm,
proximal splitting algorithm.

\paragraph{Mathematics Subject Classification (MSC 2020):}
90C25,	
68W15,  
49M27,	
65K05.	

\section{Introduction}

In this work, we consider a network of $n$ agents, connected by a communication graph, working collaboratively to solve  \begin{equation} \label{primal-problem}
    \min_{x \in \R^d} \sum_{i = 1}^n (f_i(x) + h_i(x)),
\end{equation} 
where each agent $i \in \{1, \dots, n\}$ possesses its own components of the objective function $f_i : \R^{d} \to \R \cup \{+\infty\}$ and $h_i : \R^{d} \to \R$. Here, $f_i$ is a proper lower semicontinuous convex function that may represent constraints or nonsmooth components of the problem, while $h_i$ is a differentiable convex function with locally Lipschitz continuous gradient capturing smooth components in the objective. Each agent has the ability to perform local computations using their respective $f_i$ and $h_i$, and the agents communicate via their immediate neighbours in the network to exchange information.

Distributed optimisation problems fitting the form of \eqref{primal-problem} arise in numerous applications including signal processing, statistics and machine learning \cite{mateos2010distributed,ravazzi2015distributed,nedic2017fast,omidshafiei2017deep}. Several methods have been proposed to address problems of this type; however, they typically assume that smooth components in the objective function have globally Lipschitz continuous gradients \cite{shi2015extra,shi2015proximal, li2019decentralized}, leading to convergence guarantees involving stepsize bounds  that depend on the inverse of the (global) Lipschitz modulus \cite{combettes2011proximal, shi2015proximal, li2019decentralized}. In the case such constants are unavailable, for instance, when the problem formulation does not have globally Lipschitz gradients, or when the Lipschitz constant exists but cannot be computed accurately, defining appropriate stepsizes poses a challenge. Consequently, alternative strategies for determining the stepsizes, such as via a backtracking linesearch~\cite{beck2010gradient, goldstein2013adaptive, goldstein2015adaptive, bello2016convergence}, are required.

In order to solve \eqref{primal-problem}, we present linesearch procedures for a distributed first-order algorithm. In the proposed method, each component of the objective function is handled separately, and each iteration utilises a backtracking linesearch involving communication, only allowing each agent to communicate with immediate neighbours and update local variables. For each individual agent, vector operations are performed in a decentralised manner, and the coordination operations to determine the stepsizes use distributed communication with only scalar values. Our approach builds on decentralised proximal-gradient methods, casting problem \eqref{primal-problem} into the more general framework of min-max problems and addressing the consensus constraint in the dual space. Primal-dual methods to solve min-max problems have been extensively studied in the literature, see  \cite{chambolle2011first, he2012convergence, komodakis2015playing, li2021new, malitsky2023first} and references therein.

Let us recall that a \emph{proximal-gradient exact first-order algorithm} (PG-EXTRA) is proposed in \cite{shi2015proximal} to solve \eqref{primal-problem}. This algorithm performs rounds of communications for the iterates in each iteration, using a common constant stepsize among agents. By denoting $N(i)$ the set of agents directly connected to agent $i$ in the network and $W \in \R^{n \times n}$ a matrix that represents the topology of the network and models communication among agents, called a \emph{mixing matrix} (see Definition~\ref{def:mixing-matrix}), the main iteration of PG-EXTRA can be formulated as follows. Given a fixed stepsize $\sigma>0$, for each $k \geq 1$ and $i \in \{1, \dots ,n\}$, compute 

\begin{equation} \label{agent-PGEXTRA}
    \left\{\begin{aligned}
         w^k_i& = w^{k-1}_i + \displaystyle\sum_{j \in N(i)}W_{ij}x^k_j - \dfrac{1}{2}\left(\displaystyle\sum_{j \in N(i)} W_{ij} x_j^{k-1} + x_i^{k-1}  \right) - \sigma\left(\nabla h_i(x^k_i) - \nabla h_i(x^{k-1}_i)\right)\\
         x^{k+1}_i & = \prox_{\sigma  f_i}\big(w^k_i\big).
    \end{aligned}\right.
\end{equation}
This algorithm converges to a solution to \eqref{primal-problem} as long as each $h_i$ has globally Lipschitz continuous gradients with constant $L_i$, and the stepsize $\sigma$ satisfies a bound that depends on the minimum eigenvalue of $W$ and the Lipschitz constants: 
\begin{equation*} 
    0 < \sigma < \dfrac{1 + \lambda_{\min}(W)}{L_h}\text{~~where~~}L_h = \max_{i \in \{1, \dots, n\}} L_i.
\end{equation*} 
However, computing this bound requires the smallest eigenvalue of the mixing matrix $W$ and the global Lipschitz constants of the gradients $\nabla h_i$ for each $i \in \{1, \dots, n\}$, two possibly expensive tasks. Moreover, the latter becomes unviable if at least one of the gradients is only locally Lipschitz continuous.

To overcome the aforementioned challenge {related to the Lipschitz continuity assumptions, assuming that only $\lambda_{\min}(W)$ is available}, we propose a distributed first-order algorithm that uses a linesearch to compute appropriate stepsizes using rounds of peer-to-peer communication. For a sequence of stepsizes $(\tau_k) \subseteq (0, +\infty)$ computed via this linesearch, and a constant $\beta >0$, the proposed algorithm takes the following form: for all $k \geq 1$ and $i \in \{1,\dots,n\}$,
\begin{equation} \label{ADT-i}
    \left\{\begin{aligned}
         u^k_i& = u^{k-1}_i+ \dfrac{\tau_{k-1}}{2}\left( x_i^k - \displaystyle\sum_{j \in N(i)} W_{ij} x_j^k \right)  \\
         \overline{u}^k_i &= u^k_i + \dfrac{\tau_k}{\tau_{k-1}}(u^k_i - u^{k-1}_i)\\
         x^{k+1}_i & = \prox_{\beta\tau_k  f_i}\big(x^k_i - \beta\tau_k \nabla h_i(x^k_i) - \beta\tau_k\overline{u}^k_i\big).
    \end{aligned}\right.
\end{equation} 
The relation between PG-EXTRA and the proposed algorithm is obtained by setting $\tau_k=\tau$, $\beta\tau^2=1$ and $w^k_i = x^k_i - \beta\tau_k \nabla h_i(x^k_i) - \beta\tau_k\overline{u}^k_i$ for each $i = 1, \dots, n$. This change of variables yields~\eqref{agent-PGEXTRA} with $\sigma=\beta\tau$ (see Remark~\ref{remark:PG-E} for further details). As we will see later, the alternative form used in \eqref{ADT-i} will be more convenient than \eqref{agent-PGEXTRA} for the linesearch formulation and analysis.

Our contribution can be summarised as follows. We propose two linesearch procedures for a distributed proximal splitting method to solve the multi-agent minimisation problem \eqref{primal-problem}. In other words, we propose backtracking linesearch routines for PG-EXTRA{, to lift the requirement of knowing the constants of global Lipschitz continuity of gradients, and also allow gradients to be only locally Lipschitz continuous}. Our derivation uses an abstract linesearch framework to define stepsizes for a primal-dual first-order splitting method for solving min-max convex-concave problems with a linear coupling term. We develop a convergence analysis for such splitting method that only requires locally Lipschitz continuous gradients and, in this case, the linesearch estimates the local Lipschitz constant of such gradients. Furthermore, we extend this linesearch to allow approximation of the norm of the linear coupling term, giving more flexibility to the step length in each iteration. 

The paper is organised as follows. Section~\ref{s:prelim} provides the preliminary concepts and materials used in this work. In Section~\ref{s:splitting-LS}, we propose and analyse convergence of with a unifying linesearch framework for a primal-dual algorithm, which is related to the method with linesearch proposed in \cite{malitsky2018first}. Building on this, Section~\ref{s:distributed} presents  two distributed linesearch procedures to define stepsizes for  a  distributed proximal-gradient algorithm with guarantees of convergence to a solution of \eqref{primal-problem}.
Concluding remarks are given in Section~\ref{s:conclusion}.

\section{Preliminaries}
\label{s:prelim}

\subsection{General definitions and notations}

We assume throughout that $\mathcal{U}$ is a finite-dimensional Hilbert space with inner product $\langle \cdot, \cdot \rangle$ and induced norm $\| \cdot \|$. When $\mathcal{U} = \R^d$, the notation $\langle \cdot, \cdot \rangle$ represents the dot product given by $\langle u, v \rangle = u^{\top}v$ for all $u, v \in \mathcal{U}${, where $\top$ denotes the transpose operator of a matrix}. Similarly, when $\mathcal{U} = \R^{n\times d}$, $\langle \cdot, \cdot \rangle$ denotes the trace inner product given by $\langle A, B \rangle = \text{tr}(A^{\top}B)$ for all $A, B \in \mathcal{U}$. {For a matrix $A \in \R^{n \times d}$, the range of $A$ is given by $\range A = \{ y \in \R^n: \exists x \in \R^d, y = Ax \}$.  }For a symmetric matrix $A{\in \R^{n \times n}}$, $\lambda_{\min}(A)$ and $\lambda_{\max}(A)$ denote the smallest and largest eigenvalues of $A$, respectively. {We use $e=(1,\dots,1)^\top$ to denote the vectors of ones of appropriate dimension, and $\oslash$ to denote the Hadamard division of matrices, that is, the element-wise division.}

{Given a linear operator $K: \mathcal{W} \to \mathcal{V}$ between two finite-dimensional Hilbert spaces $\mathcal{W}$ and $\mathcal{V}$ with inner products $\langle \cdot, \cdot\rangle_{\mathcal{W}}$ and $\langle \cdot, \cdot\rangle_{\mathcal{V}}$, respectively, $K^*: \mathcal{V} \to \mathcal{W}$ denotes the adjoint of the operator of $K$, that is, the linear operator that satisfies $$\forall (w,v) \in \mathcal{W}\times \mathcal{V} , \: \langle Kw, v \rangle_{\mathcal{V}} = \langle w, K^*v \rangle_{\mathcal{W}}.$$}

Let $\phi: \mathcal{U}\to \R \cup \{+\infty\}$. The \emph{domain} of $\phi$ is $\dom \phi :=\{x\in \mathcal{U}: \phi(x) <+\infty\}${, and $\overline{\dom \phi}$ denotes its closure}. We say that $\phi$ is \emph{proper} if its domain is nonempty, \emph{lower semicontinuous (lsc)} if, for all $x\in \mathcal{U}$, $\phi(x)\leq \liminf_{y\to x} \phi(y)$, and \emph{convex} if, for all $x, y\in \dom \phi$ and all $\lambda \in (0, 1)$,
\begin{equation*}
\phi(\lambda x +(1 -\lambda)y)\leq \lambda \phi(x) +(1 -\lambda)\phi(y).    
\end{equation*}{The convex conjugate of $\varphi$ is the function $\phi^*: \mathcal{U}\to \R \cup \{+\infty\}$ given by \begin{equation*}
    \phi^*(y) = \sup_{x\in\R^d}\{\langle y,x \rangle - \phi(x)\}.
\end{equation*}For any proper lsc convex function $\phi: \mathcal{U}\to \R \cup \{+\infty\}$, it holds that $\phi = \phi^{**}$. Given $x \in \R^d$, we use $\partial \phi (x)$ to denote the subdifferential of $\phi$ at $x$,  the set of vectors $v \in \R^d$ such that for all $y \in \R^d$, $\phi(y) \geq \phi(x) + \langle v, y - x \rangle.$ The domain of the subdifferential of $\phi$ is the set $\dom \partial \phi = \{x \in \R^d: \partial \phi (x) \neq \emptyset\}$. In particular, for any proper lsc convex function $\phi: \mathcal{U}\to \R \cup \{+\infty\}$, $\dom \partial \phi \neq \emptyset$.}

We say an operator $\Phi: C \subseteq \mathcal{U} \to \mathcal{U}$ is \emph{(globally) Lipschitz continuous} on $C $ with constant $L>0$ if, for all $x,y \in C$,
\begin{equation*}
\|\Phi(x) - \Phi(y)\| \leq L \| x- y\|.    
\end{equation*}
When $C = \mathcal{U}$, we simply say that $\Phi$ is \emph{Lipschitz continuous}.

Functions with Lipschitz gradients provide a powerful ingredient for convergence analysis of gradient-based methods, known as the descent lemma \cite[Lemma 2.64]{bauschke2011convex}. This result states that the first-order approximation error can be bounded by a quadratic term when gradients are Lipschitz continuous. 

\begin{lemma}[Descent lemma] 
\label{descent-lemma}
Suppose $\phi: C \to \R$ is a differentiable function over an open convex set $C \subseteq \mathcal{U}$, such that $\nabla \phi$ is Lipschitz continuous on $C$ with constant $L>0$. Then, for all $x,y \in C$,
    \begin{equation*} 
        |\phi(y) - \phi(x) - \langle \nabla \phi(x), y - x \rangle| \leq \dfrac{L}{2}\| y - x \|^2.
    \end{equation*} 
\end{lemma}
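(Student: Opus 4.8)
The plan is to reduce the two-sided estimate to a one-dimensional calculus argument along the line segment joining $x$ and $y$, using the fundamental theorem of calculus together with the Lipschitz bound on $\nabla\phi$. Fix $x,y \in C$. Since $C$ is open and convex, the whole segment $\{x + t(y-x) : t \in [0,1]\}$ lies in $C$, so the scalar function $g(t) := \phi(x + t(y-x))$ is well defined and differentiable on (an open interval containing) $[0,1]$, with $g'(t) = \langle \nabla\phi(x + t(y-x)), y-x\rangle$ by the chain rule.

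The first key step is to write the first-order remainder as an integral:
\begin{equation*}
\phi(y) - \phi(x) - \langle \nabla\phi(x), y-x\rangle
= g(1) - g(0) - g'(0)
= \int_0^1 \bigl(g'(t) - g'(0)\bigr)\, dt
= \int_0^1 \langle \nabla\phi(x + t(y-x)) - \nabla\phi(x), y-x\rangle\, dt.
\end{equation*}
The second step is to bound the integrand in absolute value: by Cauchy--Schwarz and then the Lipschitz continuity of $\nabla\phi$ on $C$ (applied to the points $x + t(y-x)$ and $x$, both in $C$),
\begin{equation*}
\bigl|\langle \nabla\phi(x + t(y-x)) - \nabla\phi(x), y-x\rangle\bigr|
\leq \|\nabla\phi(x + t(y-x)) - \nabla\phi(x)\|\, \|y-x\|
\leq L\, t\, \|y-x\|^2.
\end{equation*}
The final step is to integrate this bound over $t \in [0,1]$, using $\int_0^1 t\, dt = \tfrac12$, and to apply the triangle inequality for integrals to the displayed integral representation of the remainder; this yields
\begin{equation*}
\bigl|\phi(y) - \phi(x) - \langle \nabla\phi(x), y-x\rangle\bigr|
\leq \int_0^1 L\, t\, \|y-x\|^2\, dt
= \frac{L}{2}\|y-x\|^2,
\end{equation*}
which is the claim.

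The argument is essentially routine; the only points requiring a little care are justifying differentiability of $g$ and the chain-rule formula for $g'$ (which is where openness of $C$ is used, to ensure $\nabla\phi$ exists in a neighbourhood of the segment), and noting that the integrand $t \mapsto g'(t)$ is continuous — hence Riemann integrable — because $\nabla\phi$ is Lipschitz and therefore continuous on $C$. There is no serious obstacle here; since the paper cites \cite[Lemma 2.64]{bauschke2011convex}, one could alternatively simply invoke that reference, but the self-contained one-dimensional computation above is short enough to include directly.
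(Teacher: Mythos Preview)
Your proof is correct and is the standard argument for this result. The paper itself does not give a proof of this lemma at all; it merely states the result and attributes it to \cite[Lemma~2.64]{bauschke2011convex}. So there is nothing to compare against beyond noting that you have supplied a self-contained argument where the paper is content with a citation. Your care about openness and convexity of $C$ (to keep the segment inside the domain and to have $\nabla\phi$ defined along it) and about integrability of $g'$ is appropriate; there are no gaps.
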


We say that an operator $\Phi: C \subseteq \mathcal{U} \to \mathcal{U}$ is \emph{locally Lipschitz continuous} on $C$ if, for all $x \in \mathcal{U}$, there exists an open set $U_x \subseteq C$ containing $x$, such that $\Phi$ restricted to $U_x$ is Lipschitz continuous. Any twice continuously differentiable function is locally Lipschitz, since they have locally bounded Hessians. For locally Lipschitz functions in finite-dimensional spaces,  Lemma~\ref{descent-lemma} is therefore satisfied over bounded sets.

{
One important application in image processing in which local Lipschitz continuity of the gradients rises naturally is the family of Poisson inverse problems \cite{bertero2009image,bauschke2017descent,li2022splitting}. In particular, for this family, an optimisation problem is solved involving the Kullback–Leibler (KL) divergence of $z \in \R^p$ from $y \in \R^p$: \begin{equation} \label{e:KL}
    \text{KL}(z,y) = \sum_{j=1}^p y_j \ln\left(\dfrac{y_j}{z_j}\right) + z_j - y_j . 
\end{equation} Clearly, the gradient of the KL divergence with respect to $z$, \begin{equation*}
    \nabla_z\text{KL}(z,y) = e - y\oslash z,
\end{equation*} is not globally Lipschitz continuous on $(0,+\infty)^p$, but only locally. In Section~\ref{s:numerical}, we explain how a maximum likelihood approach to model the inverse problem prompts the KL divergence to appear.
}

Given a proper lsc convex function $\phi: \mathcal{U} \to \R \cup \{+\infty\}$ and $\tau>{0}$, the \emph{proximity operator} of $\phi$ with stepsize $\tau >0$ at $x\in\mathcal{U}$ is given by
\begin{equation*}
    \prox_{\tau \phi}(x) = \argmin_{y\in\mathcal{U}} \left\{ \phi(y) + \dfrac{1}{2\tau}\|y-x\|^2 \right\}.
\end{equation*}
For a nonempty closed convex set $X \subseteq \mathcal{U}$, the \emph{indicator function} $\iota_X$ of $X$ given by $\iota_X(x) = 0$ if $x \in X$ and $+\infty$ otherwise, and the \emph{projector} onto $X$ is denoted by $\proj_X :=\prox_{\iota_X}$.

The following lemma is concerned with continuity properties of proximity operators. Here, the notation $\overline{X}$ denotes the \emph{closure} of a set $X\subseteq\mathcal{U}$.

\begin{lemma}[\hspace{-0.09ex}{\cite[Corollary~7]{friedlander2023perspective}}] \label{lemma:prox-extension}
    Let $\phi: \mathcal{U} \to \R \cup \{+\infty\}$ be a proper lsc convex function. Then the operator given by $$[0,+\infty)\times\mathcal{U}\to\mathcal{U}\colon(t, x) \mapsto\begin{cases}
            \prox_{t \phi}(x) & \text{if~} t>0, \\
            \proj_{\overline{\dom(\phi)}}(x) & \text{if~} t=0
    \end{cases}$$
    is continuous on its domain. Moreover, it is locally Lipschitz on the interior of its domain.
\end{lemma}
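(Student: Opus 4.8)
The plan is to reduce everything to two elementary properties of the proximity operator viewed as the resolvent of $\partial\phi$: for each fixed $t>0$ the map $x\mapsto\prox_{t\phi}(x)$ is (firmly) nonexpansive, hence $1$-Lipschitz, and the \emph{resolvent identity}
\[
\prox_{t\phi}(x)=\prox_{s\phi}\!\Big(\tfrac{s}{t}\,x+\big(1-\tfrac{s}{t}\big)\prox_{t\phi}(x)\Big)\qquad(s,t>0),
\]
which follows at once from the optimality inclusion $\tfrac1t\big(x-\prox_{t\phi}(x)\big)\in\partial\phi\big(\prox_{t\phi}(x)\big)$ by matching subgradients. I would also record an \emph{a priori bound}: fixing any $(\z,v)\in\gra\partial\phi$ (possible since $\dom\partial\phi\neq\emptyset$) and using monotonicity of $\partial\phi$ on the inclusions $\tfrac1t\big(x-\prox_{t\phi}(x)\big)\in\partial\phi(\prox_{t\phi}(x))$ and $v\in\partial\phi(\z)$ gives
\[
\big\|\prox_{t\phi}(x)-\z\big\|\le\|x-\z\|+t\|v\|,
\]
so $\prox_{t\phi}(x)$ stays bounded as $t\downarrow 0$ and, more generally, locally uniformly in $(t,x)$.

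With these tools, continuity at a point $(t,x)$ with $t>0$ follows from the split
\[
\big\|\prox_{t_n\phi}(x_n)-\prox_{t\phi}(x)\big\|\le\|x_n-x\|+\big\|\prox_{t_n\phi}(x)-\prox_{t\phi}(x)\big\|,
\]
where the first term vanishes by $1$-Lipschitzness in the second argument; for the second term, the resolvent identity rewrites $\prox_{t_n\phi}(x)=\prox_{t\phi}(y_n)$ with $y_n=\tfrac{t}{t_n}x+\big(1-\tfrac{t}{t_n}\big)\prox_{t_n\phi}(x)$, and $y_n\to x$ because $t_n\to t>0$ and $\big(\prox_{t_n\phi}(x)\big)_n$ is bounded by the a priori bound (only finitely many $t_n$ can be $0$, which are discarded), so continuity of $\prox_{t\phi}(\cdot)$ finishes the argument.

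The substantive step is the behaviour as $t\downarrow0$, i.e.\ $\prox_{t\phi}(x)\to\proj_{\overline{\dom\phi}}(x)$. Here I would start from optimality of $\prox_{t\phi}(x)$: for every $y\in\dom\phi$,
\[
\tfrac12\big\|\prox_{t\phi}(x)-x\big\|^2\le\tfrac12\|y-x\|^2+t\big(\phi(y)-\phi(\prox_{t\phi}(x))\big);
\]
bounding $\phi(\prox_{t\phi}(x))\ge\phi(\z)+\langle v,\prox_{t\phi}(x)-\z\rangle$ and invoking the a priori bound shows $t\big(\phi(y)-\phi(\prox_{t\phi}(x))\big)\to0$, whence $\limsup_{t\downarrow0}\big\|\prox_{t\phi}(x)-x\big\|\le\dist(x,\overline{\dom\phi})$. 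Since $\prox_{t\phi}(x)\in\dom\phi$ for $t>0$, any cluster point of $\big(\prox_{t\phi}(x)\big)$ as $t\downarrow0$ lies in $\overline{\dom\phi}$ and attains this distance, hence equals $\proj_{\overline{\dom\phi}}(x)$ by uniqueness of the metric projection onto a nonempty closed convex set; boundedness then upgrades this to convergence. Plugging this into the same triangle split as before (with $\proj_{\overline{\dom\phi}}(x)$ in the role of $\prox_{t\phi}(x)$, and handling any indices with $t_n=0$ directly via $1$-Lipschitzness of $\proj_{\overline{\dom\phi}}$) yields continuity at $t=0$, completing joint continuity.

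For local Lipschitz continuity on the interior $(0,+\infty)\times\mathcal{U}$, I would combine $1$-Lipschitzness in $x$ with the estimate coming from the resolvent identity,
\[
\big\|\prox_{t\phi}(y)-\prox_{s\phi}(y)\big\|\le\frac{|t-s|}{t}\,\big\|y-\prox_{t\phi}(y)\big\|\le\frac{|t-s|}{t}\big(2\|y-\z\|+t\|v\|\big),
\]
the last step being the a priori bound again; on any box $[a,b]\times\{y:\|y-\z\|\le R\}$ with $a>0$ this produces a Lipschitz constant in $(t,x)$ depending only on $a,b,R$. The main obstacle is the $t\downarrow0$ analysis: one must pin the limit of $\prox_{t\phi}(x)$ down to exactly $\proj_{\overline{\dom\phi}}(x)$ rather than to an unspecified point of $\overline{\dom\phi}$, and this is precisely where the a priori bound, the affine minorant of $\phi$ at $\z$, and uniqueness of the projection must be combined; the rest is only triangle inequalities together with the resolvent identity.
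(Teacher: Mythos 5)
Your proof is correct. Note, however, that the paper does not prove this lemma at all: it simply cites \cite[Corollary~7]{friedlander2023perspective}, where the statement is obtained through the machinery of perspective functions and epigraphical analysis. Your argument is a genuinely different, self-contained route: everything is reduced to (a) firm nonexpansiveness of $\prox_{t\phi}$ in $x$, (b) the resolvent identity $\prox_{t\phi}(x)=\prox_{s\phi}\bigl(\tfrac{s}{t}x+(1-\tfrac{s}{t})\prox_{t\phi}(x)\bigr)$, and (c) the a priori bound $\|\prox_{t\phi}(x)-\z\|\leq\|x-\z\|+t\|v\|$ from monotonicity of $\partial\phi$ at a fixed $(\z,v)\in\gra\partial\phi$. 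The checks go through: the resolvent identity does follow from matching the optimality inclusions; the joint continuity at $t>0$ follows from equi-Lipschitzness in $x$ plus the identity; the boundary case uses the correct Moreau-envelope-type inequality $\tfrac12\|\prox_{t\phi}(x)-x\|^2\leq\tfrac12\|y-x\|^2+t(\phi(y)-\phi(\prox_{t\phi}(x)))$ together with the affine minorant of $\phi$ at $\z$, and the uniqueness of the projection pins down the limit $\proj_{\overline{\dom\phi}}(x)$; the local Lipschitz estimate $\|\prox_{t\phi}(y)-\prox_{s\phi}(y)\|\leq\tfrac{|t-s|}{t}\|y-\prox_{t\phi}(y)\|$ on boxes $[a,b]\times\{\|y-\z\|\leq R\}$ with $a>0$ is exactly what is needed on the interior of the domain. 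Two small remarks: your extraction of cluster points as $t\downarrow 0$ uses finite dimensionality, which is consistent with the paper's standing assumption on $\mathcal{U}$ (in a general Hilbert space one would need the classical strong convergence result for $\prox_{t\phi}(x)\to\proj_{\overline{\dom\phi}}(x)$); and the nonemptiness of $\dom\partial\phi$ that you invoke is recorded in the paper's preliminaries, so nothing is missing. What your approach buys is independence from the cited reference at the cost of a page of elementary estimates; what the citation buys the authors is brevity and a slightly stronger packaged statement.
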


\subsection{Distributed optimisation}

We now turn our attention to notation and preliminaries for distributed optimisation. Following~\cite{shi2015extra,shi2015proximal,ryu2022large}, given $n$ (column) vectors $x_1,\dots,x_n \in \R^d$, we denote 
\begin{equation*}
\bx =  \begin{pmatrix}
     x_1^\top \\ \vdots \\ x_n^\top \\
 \end{pmatrix} = \begin{pmatrix}
     x_1 & \cdots & x_n \\
 \end{pmatrix}^\top \in\mathbb{R}^{n\times d} 
 \end{equation*}
For the functions in problem \eqref{primal-problem}, we define $f : \R^{n \times d} \to \R \cup \{+\infty\}$ and $h : \R^{n \times d} \to \R$ as 
\begin{equation*}
     f(\bx) := \sum_{i = 1}^n f_i(x_i) \text{~~and~~} h(\bx) := \sum_{i = 1}^n h_i(x_i).
\end{equation*} 
Under this definition, the proximity operator of $f$ with parameter $\tau$ and the gradient of $h$ are given by
 \begin{equation*}
   \prox_{\tau f}(\bx) = \begin{pmatrix} \prox_{\tau f_1}(x_1)^\top \\ \vdots \\ \prox_{\tau f_n}(x_n)^\top \end{pmatrix}\text{~~and~~}
     \nabla h(\bx) = \begin{pmatrix}
     \nabla h_1(x_1)^\top \\ \vdots \\ \nabla h_n(x_n)^\top \\
 \end{pmatrix}.
 \end{equation*}

In order to model communication between agents through the communication graph $(V,E)$ in~\eqref{primal-problem}, we recall the following definition (see, for example, \cite{shi2015extra}).

\begin{definition}[Mixing matrix] 
\label{def:mixing-matrix}
Given an undirected graph $(V,E)$ with $|V|=n$, a matrix $W \in \R^{n \times n}$ is called a \emph{mixing matrix} if
\begin{enumerate}
\item\label{def:mixing-matrix_nei}
$W_{ij} = 0$ whenever $ij \notin E$,
\item\label{def:mixing-matrix_sym}
$W = W^{\top}$,
\item\label{def:mixing-matrix_cons}
$\ker(I - W) = \R e$, where $e=(1,\dots,1)^\top$,
\item\label{def:mixing-matrix_eig}
$-I \prec W \preceq I$.
\end{enumerate}
\end{definition}

Definition~\ref{def:mixing-matrix}\ref{def:mixing-matrix_nei}--\ref{def:mixing-matrix_sym} ensures that communication is symmetric and only occurs between neighbours, while Definition~\ref{def:mixing-matrix}\ref{def:mixing-matrix_cons} is used to ensure consensus, and Definition~\ref{def:mixing-matrix}\ref{def:mixing-matrix_eig} is related to convergence. Note that, under the notation defined in this section,  \eqref{agent-PGEXTRA} can be combined for all agents $i\in\{1,\dots,n\}$ to write the PG-EXTRA compactly as
\begin{equation}\label{PG-EXTRA}
    \left\{\begin{aligned}
         \bw^k& = \bw^{k-1} + W\bx^k   - \dfrac{1}{2}(W+I)\bx^{k-1} - \tau\left( \nabla h(\bx^k)-\nabla h(\bx^{k-1})\right)  \\
         \bx^{k+1} & = \prox_{\tau  f}\big(\bw^k\big).
    \end{aligned}\right.
\end{equation} 

As discussed in \cite{malitsky2023first}, PG-EXTRA can be understood as the \emph{{Condat--V\~{u}} method}~\cite{condat2013primal,vu2013splitting} applied to a reformulation of \eqref{primal-problem} as a saddle-point problem (see also Section~\ref{s:distributed}). In what follows, we recall the derivation of this reformulation. For each agent $i\in\{1,\dots,n\}$, let $x_i\in\mathbb{R}^d$ denote its copy of the decision variable $x\in\mathbb{R}^d$. Then problem~\eqref{primal-problem} can be expressed as
\begin{equation}\label{primal-problem extended} 
    \min_{x_1,\dots,x_n \in \R^d} \sum_{i = 1}^n f_i(x_i) + h_i(x_i)\text{~~s.t.~~}  x_1 = \dots = x_n.
\end{equation}
Let $U := \bigl((I-W)/2\big)^{1/2}$ so that $\ker(U) = \ker(U^2)=\ker(I-W)=\mathbb{R}e$. Then $x_1 = \dots = x_n $ if and only if $U \bx = 0$, in which case we say $\bx$ is in \emph{consensus}. Hence, problem \eqref{primal-problem extended} becomes
\begin{equation} \label{primal-problem-2}
    \min_{\bx \in \R^{n \times d}} \iota_{\{0\}}(U\bx) + f(\bx) + h(\bx). 
\end{equation}
Since $\iota_{\{0\}}$ is proper lsc convex with $\iota^*_{\{0\}}=0$, we have $\iota_{\{0\}}=\iota_{\{0\}}^{**}=\sup_{\by\in\mathbb{R}^{n\times d}}\langle \cdot ,\by\rangle$. Substituting this into \eqref{primal-problem-2} therefore gives
\begin{equation}\label{eq:extra saddle}  
\min_{\bx \in \R^{n \times d}}\sup_{\by \in \R^{n \times d}}  \langle U\by,\bx \rangle + f(\bx) + h(\bx).
\end{equation} 
Altogether, this shows that the multi-agent minimisation problem \eqref{primal-problem} can be reformulated as a saddle-point problem~\eqref{eq:extra saddle} having a linear coupling term related to consensus among agents. Note that, in this context, $\|U\|$ can be computed from the mixing matrix $W$. The structure of the latter problem~\eqref{eq:extra saddle}  motivates our study of min-max problems in the following section.

\section{Proximal splitting algorithm with linesearch} \label{s:splitting-LS}
Consider the following convex-concave  min-max problem
\begin{equation} \label{MP-problem}
    \min_{u \in \mathcal{U}} \max_{v \in \mathcal{V}} \: \Psi(u,v):=\langle Ku,v\rangle + G(u) - F(v) - H(v),
\end{equation}
where $K: \mathcal{U} \to \mathcal{V}$ is a  (bounded) linear operator between two finite-dimensional Hilbert spaces, $G: \mathcal{U} \to \R \cup \{+\infty\} $ and $F: \mathcal{V} \to \R \cup \{+\infty\}$ are proper lsc convex functions, and $H: \mathcal{V} \to \R$ is convex differentiable with locally Lipschitz continuous gradient. We assume that $\|K\|$ is known and that there exists a \emph{saddle point} for problem \eqref{MP-problem}, that is, there exists a pair $(\hat{u}, \hat{v}) \in \mathcal{U} \times \mathcal{V}$ satisfying
\begin{equation}\label{eq:saddle defn}
\forall (u,v)\in\mathcal{U}\times\mathcal{V},\quad \Psi(\hat{u},v)\leq \Psi(\hat{u},\hat{v})\leq \Psi(u,\hat{v}).
\end{equation}
Conditions for existence of saddle points for convex-concave problems can be found, for example, in \cite{rockafellar1970monotone,rockafellar1971saddle}.  Note that \eqref{eq:saddle defn} can be equivalently expressed as the following pair of inequalities
\begin{equation} \label{def:saddle-point-2}
    \left\{\begin{aligned}
    \forall u \in \mathcal{U}, &&\hat{P}_{(\hat{u},\hat{v})}(u) &:= G(u) - G(\hat{u}) + \langle K^* \hat{v}, u - \hat{u} \rangle\geq0, \\
    \forall v \in \mathcal{V}, &&\hat{D}_{(\hat{u},\hat{v})}(v) &:= (F+H)(v) - (F+H)(\hat{v}) - \langle K \hat{u}, v - \hat{v} \rangle\geq0.
    \end{aligned}\right.
\end{equation}
The two quantities above define the \emph{primal-dual gap}. Given a saddle point $(\hat{u}, \hat{v})$ of problem \eqref{MP-problem}, the \emph{primal-dual gap} is given by \begin{equation} \label{primal-dual-gap}
    \hat{\mathcal{G}}_{(\hat{u},\hat{v})}(u,v) = \hat{P}_{(\hat{u},\hat{v})}(u) + \hat{D}_{(\hat{u},\hat{v})}(v)\geq 0.
\end{equation} 

Assuming $\nabla H$ is globally Lipschitz continuous (with constant $L>0$) and knowledge of~$\|K\|$, the \emph{Condat--V\~u method}~\cite{condat2013primal,vu2013splitting} can be used to solve~\eqref{MP-problem}. Given stepsizes $\tau,\sigma>0$ satisfying $\frac{L\sigma}{2}+\tau\sigma\|K\|^2 \leq 1$, this method takes the form
\begin{equation*} 
    \left\{\begin{aligned}
        u^k & =   \prox_{\tau G}\big(u^{k-1} - \tau K^*v^k\big) \\
        \overline{u}^k & =   2u^k - u^{k-1} \\
        v^{k+1} & =   \prox_{\sigma F}\big( v^k + \sigma \big[K \overline{u}^k - \nabla H(v^k) \big] \big).
    \end{aligned}\right.
\end{equation*}
Note that, in the special case where $H=0$, this method reduces to the \emph{primal-dual hybrid gradient (PDHG) method} from \cite{chambolle2011first}.  To handle the case where useful estimates of both $L$ and $\|K\|$ are unavailable, a version of the {Condat--V\~{u}} method with a backtracking linesearch was developed in \cite[Section~4]{malitsky2018first}. However, this approach is not directly suited to our setting for two reasons: firstly, it still requires global Lipschitz continuity of $H$ (even though the Lipschitz constant is not needed) and, secondly, it can not exploit knowledge of $\|K\|$ even when it is available. In the distributed setting of Section~\ref{s:distributed}, the latter is related to the minimum eigenvalue of the mixing matrix and, as such, {often needs to be known to determine an interval for valid stepsizes that guarantee convergence (see, e.g., \cite{shi2015extra,shi2015proximal,li2019decentralized,alghunaim2020decentralized,malitsky2023first}). When this quantity is unknown, an approximation can be used based on the maximum degree of the network, as explained in \cite[Section~2.4(ii)]{shi2015extra}.}   


Thus, in order to solve problem \eqref{MP-problem} in the setting important for distributed optimisation, in this section, we extend the analysis of the primal-dual algorithm with linesearch introduced in \cite{malitsky2018first} under the assumption that $\nabla H$ is locally Lipschitz (but not necessarily globally Lipschitz) and that $\|K\|$ is known.  Our analysis considers an abstract linesearch procedure satisfying Assumptions~\ref{assumption-LS} and~\ref{assumption-LS-iii} (for use in Section~\ref{s:distributed}).

\subsection{Primal-dual algorithm with abstract linesearch}

 In Algorithm~\ref{a:P-D}, we present our proposed method with an abstract linesearch procedure, denoted \texttt{LS}, which is assumed to satisfy Assumption~\ref{assumption-LS}. The main result of this section (Theorem~\ref{th:abstract-convergence}) shows that the sequences generated by Algorithm~\ref{a:P-D} converge to a saddle point of \eqref{MP-problem}. An implementation of  the abstract linesearch is given in Subroutine~\ref{LS:0}.

\begin{assumption}[Abstract linesearch] \label{assumption-LS}

{Given a set of global parameters $\mathcal{P}$, and the input}
$$(\texttt{param}, \tau_{\text{init}}, \tau, u^-, u, v) \in \mathcal{P} \times  (0,+\infty)\times  (0,+\infty) \times \mathcal{U} \times \mathcal{U} \times \mathcal{V}$$ {the output of the abstract linesearch procedure \texttt{LS},} 
$$(\tau^+, v^+) \in (0,+\infty) \times \mathcal{V},$$ {satisfies} the following conditions.
\begin{enumerate}
\item
$\tau^+ \leq \tau_{\text{init}}$.
        
\item
{for some $\beta>0$ and $\delta_L \in (0,1)$, such that $\beta,\delta_L \in \texttt{param}$, it holds } $$  \tau^+\left( H(v^+) - H(v) - \langle \nabla H(v), v^+ - v \rangle \right) \leq \dfrac{{\delta_L}}{2\beta} \|v^+ - v\|^2.$$
\end{enumerate}
\end{assumption}

The set of parameters \texttt{param} in Assumption~\ref{assumption-LS}, besides including the constants $\beta$ and $\deltaL$ that appear in the descent condition (ii), it may also involve other parameters that define internal operations in \texttt{LS}. We will define a specific set of parameters in Section~\ref{ss:LS-implementation}.

\begin{algorithm}[!htb]
\caption{Primal-dual algorithm with linesearch for problem~\eqref{MP-problem}.\label{a:P-D}}

Choose initial points $u^0 \in \mathcal{U}$, $v^1 \in \mathcal{V}$\;
Choose a set of parameters \texttt{param} containing $\beta >0$ and $ \deltaL \in (0,1)$\;
Choose parameters $\tau_0 >0$, $\gamma \in (0,1),$ and $\deltaB \in (0,1)$ such that $\deltaB + \deltaL < 1$\;
Set $\theta_0=1$\;
\For{$k=1,2,\dots \:$}{

    \step{Dual update} Define\begin{equation*} 
        u^k = \prox_{\tau_{k-1}G}(u^{k-1} - \tau_{k-1}K^* v^k).
    \end{equation*}

     \step{Backtracking linesearch and primal update} Initialise the linesearch by choosing  $\alpha_k \in [1, \sqrt{1 + \gamma\theta_{k-1}}]$, and define 
    \begin{equation} \label{init-tau0}\tau_{k(0)} = \min\left\{\dfrac{\sqrt{\deltaB}}{\sqrt{\beta}\|K\|}, \tau_{k-1}\alpha_k\right\}.
    \end{equation}

    Compute \begin{equation*}
        {(}\tau_k, v^{k+1}{)} = \texttt{LS}(\texttt{param}, \tau_{k(0)}, \tau_{k-1}, u^{k-1}, u^k, v^k),
    \end{equation*} 
    
    where \begin{align*}
    \theta_k &= \tau_{k}\tau_{k-1}^{-1} \\
    \overline{u}^{k}  &= u^k + \theta_k(u^k - u^{k-1}) \\
        v^{k+1} &= \prox_{\beta \tau_{k} F}\big( v^k + \beta \tau_{k} [K\overline{u}^{k} - \nabla H(v^k)]\big).
    \end{align*}
}
\end{algorithm}

 In order to prove convergence of Algorithm~\ref{a:P-D}, motivated by \cite[Theorem 3.4]{malitsky2018first}, we first show that the algorithm satisfies a sufficient descent condition and generates bounded sequences. 

\begin{lemma}[Sufficient decrease and boundedness of iterates] 
\label{prop:bounded}
 Suppose that $F: \mathcal{V} \to \R \cup \{+\infty\}$ and $G: \mathcal{U} \to \R \cup \{+\infty\}$ are proper lsc convex functions, and $H: \mathcal{V} \to \R$ is a differentiable convex function with locally Lipschitz continuous gradient. 
In addition, suppose Assumption~\ref{assumption-LS} holds, and let  $(\hat{u},\hat{v})$ be a saddle point of \eqref{MP-problem}.  Then, for the sequences $(v^k)$, $(\overline{u}^k)$ and $(u^k)$ generated by Algorithm~\ref{a:P-D}, the following hold: 
\begin{enumerate}
\item
The sequence $(\varphi_k)$ defined by \begin{equation} \label{def:varphik}\varphi_k:=\tau_{k-1}(1 + \theta_{k-1}) \hat{P}_{(\hat{u}, \hat{v})}(u^{k-1}) +  \dfrac{1}{2} \| u^k - \hat{u}\|^2   + \dfrac{1}{2\beta} \|v^k - \hat{v} \|^2,\end{equation} satisfies the sufficient decrease condition
\begin{multline}\label{eq:sufficient decrease}
    0\leq \varphi_{k+1} \leq \varphi_k  - \dfrac{1}{2}    \|\overline{u}^k - u^k\|^2  - \dfrac{1-(\deltaB +\deltaL)}{2\beta} \|  v^{k+1} -  v^k\|^2 \\ - \tau_k\hat{D}_{(\hat{u}, \hat{v})}(v^{k+1}) - (1-\gamma)\tau_{k-1}\theta_{k-1}\hat{P}_{(\hat{u}, \hat{v})}(u^{k-1}). 
\end{multline} 

\item
The sequences $(v^k)$, $(\overline{u}^k)$ and $(u^k)$   are bounded, and 

\item
$(\|v^{k+1} - v^k\|)$, $(\|\overline{u}^k - u^k\|)$, $(\tau_{k}\theta_{k}\hat{P}_{(\hat{u}, \hat{v})}(u^{k}))$, and $(\tau_k\hat{D}_{(\hat{u}, \hat{v})}(v^{k+1}))$ converge to $0$.
\end{enumerate}
\end{lemma}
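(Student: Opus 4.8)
The strategy is the standard ``Lyapunov-type'' argument for primal-dual proximal algorithms: derive an inequality from each prox step using the characterisation of the proximity operator (firm nonexpansiveness / the subdifferential inclusion), combine the primal and dual inequalities, use the linesearch descent condition to absorb the $H$-term, and telescope. Throughout I would work with the reformulated optimality conditions \eqref{def:saddle-point-2} so that the primal-dual gap quantities $\hat P_{(\hat u,\hat v)}$ and $\hat D_{(\hat u,\hat v)}$ appear naturally.

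First I would record the two prox-inequalities. For the dual update $u^k = \prox_{\tau_{k-1}G}(u^{k-1}-\tau_{k-1}K^*v^k)$, the optimality condition gives, for every $u\in\mathcal U$,
\begin{equation*}
\langle u^{k-1}-u^k-\tau_{k-1}K^*v^k, u-u^k\rangle \leq \tau_{k-1}\bigl(G(u)-G(u^k)\bigr),
\end{equation*}
and similarly for $v^{k+1}=\prox_{\beta\tau_k F}(v^k+\beta\tau_k[K\overline u^k-\nabla H(v^k)])$, for every $v\in\mathcal V$,
\begin{equation*}
\langle v^k+\beta\tau_k[K\overline u^k-\nabla H(v^k)]-v^{k+1}, v-v^{k+1}\rangle \leq \beta\tau_k\bigl(F(v)-F(v^{k+1})\bigr).
\end{equation*}
Next I would evaluate the first at $u=\hat u$ and the second at $v=\hat v$, use the cosine rule $2\langle a-b,c-b\rangle=\|a-b\|^2+\|c-b\|^2-\|a-c\|^2$ on the inner products with $u^{k-1},u^k,\hat u$ (and with $v^k,v^{k+1},\hat v$), and multiply the dual inequality by $\beta^{-1}$ so the $\tfrac{1}{2\beta}\|v-\hat v\|^2$ terms appear. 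The crucial cross term is the coupling $\langle K^* v^k, \cdot\rangle$ against $\langle K\overline u^k,\cdot\rangle$: using $\overline u^k = u^k+\theta_k(u^k-u^{k-1})$ and the identity $\langle Ka,b\rangle=\langle a,K^*b\rangle$, the terms $\langle K^*(v^{k+1}-\hat v),u^k-\hat u\rangle$ and $\langle K(\overline u^k-\hat u),v^{k+1}-\hat v\rangle$ should be arranged to create a telescoping ``momentum'' term of the form $\theta_k\langle K^*(v^{k+1}-v^k),u^k-u^{k-1}\rangle$ plus a term matching the previous iteration; this is exactly where the choice $\alpha_k\le\sqrt{1+\gamma\theta_{k-1}}$ and the definition of $\tau_{k(0)}$ via $\sqrt{\delta_K}/(\sqrt\beta\|K\|)$ enter, since one bounds the leftover coupling term by Cauchy--Schwarz and $\|K\|$: $|\theta_k\langle K^*(v^{k+1}-v^k),u^k-u^{k-1}\rangle| \le \tfrac12\|\overline u^k-u^k\|^2 + \tfrac{\delta_K}{2\beta}\|v^{k+1}-v^k\|^2$ using $\beta\tau_k^2\|K\|^2\le\delta_K$. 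For the smooth part, I would invoke Assumption~\ref{assumption-LS}(ii) to bound $\tau_k(H(v^{k+1})-H(v^k)-\langle\nabla H(v^k),v^{k+1}-v^k\rangle)$ by $\tfrac{\delta_L}{2\beta}\|v^{k+1}-v^k\|^2$, which together with the $\delta_K$ term produces the coefficient $\tfrac{1-(\delta_K+\delta_L)}{2\beta}$, nonnegative because $\delta_K+\delta_L<1$; convexity of $H$ turns the remaining linear-in-$v$ piece into $-\tau_k(F+H)(v^{k+1})$-type contributions feeding $\hat D$.

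After collecting terms, the inequality should read precisely as \eqref{eq:sufficient decrease} with $\varphi_k$ as in \eqref{def:varphik}; the term $\tau_{k-1}(1+\theta_{k-1})\hat P(u^{k-1})$ in $\varphi_k$ versus $\tau_k(1+\theta_k)\hat P(u^k)$ in $\varphi_{k+1}$ arises from carrying the $\hat P(u^{k-1})$ contribution forward, with the extra $-(1-\gamma)\tau_{k-1}\theta_{k-1}\hat P(u^{k-1})$ left over — here one uses $\theta_k\tau_k = \alpha_k^2\tau_{k-1}\theta_{k-1} \le (1+\gamma\theta_{k-1})\tau_{k-1}\theta_{k-1}$, wait more precisely $\tau_k\theta_k\le(1+\gamma\theta_{k-1})\tau_{k-1}$ is not quite it; the bookkeeping is that $\alpha_k^2\le 1+\gamma\theta_{k-1}$ gives $\tau_k\theta_k = \tau_{k-1}\alpha_k\cdot\theta_k \le$ the needed bound so that the $\hat P$ coefficients match up to the stated slack. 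Nonnegativity of $\varphi_{k+1}$ follows from \eqref{def:saddle-point-2}: $\hat P_{(\hat u,\hat v)}(u^{k})\ge 0$, and $\theta_{k}\ge0$, $\tau_k>0$. For (ii), since every subtracted term on the right of \eqref{eq:sufficient decrease} is nonnegative (using $\hat D\ge0$, $\hat P\ge0$, $\gamma<1$, $\delta_K+\delta_L<1$), $(\varphi_k)$ is nonincreasing and bounded below by $0$, hence convergent; in particular $\tfrac12\|u^k-\hat u\|^2+\tfrac1{2\beta}\|v^k-\hat v\|^2\le\varphi_k\le\varphi_1$, giving boundedness of $(u^k)$ and $(v^k)$, and then boundedness of $(\overline u^k)$ from $\overline u^k=u^k+\theta_k(u^k-u^{k-1})$ together with boundedness of $(\theta_k)$ — the latter because $\tau_k\le\tau_{k(0)}\le\tau_{k-1}\alpha_k\le\tau_{k-1}\sqrt{1+\gamma\theta_{k-1}}$ yields a recursion $\theta_k^2\le 1+\gamma\theta_{k-1}$ whose iterates stay in a bounded interval. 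For (iii), summing \eqref{eq:sufficient decrease} over $k$ and using convergence of $(\varphi_k)$ gives $\sum_k\bigl(\tfrac12\|\overline u^k-u^k\|^2+\tfrac{1-(\delta_K+\delta_L)}{2\beta}\|v^{k+1}-v^k\|^2+\tau_k\hat D(v^{k+1})+(1-\gamma)\tau_{k-1}\theta_{k-1}\hat P(u^{k-1})\bigr)<\infty$, whence each summand, and in particular $\|v^{k+1}-v^k\|$, $\|\overline u^k-u^k\|$, $\tau_k\hat D_{(\hat u,\hat v)}(v^{k+1})$ and $\tau_k\theta_k\hat P_{(\hat u,\hat v)}(u^k)$, tends to $0$.

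The main obstacle I anticipate is the coupling-term bookkeeping in the combined inequality: getting the cross terms involving $K$, $\overline u^k$, $v^k$ and $v^{k+1}$ to reorganise into a genuinely telescoping quantity (so that what is ``borrowed'' at step $k$ is exactly ``repaid'' at step $k+1$) while simultaneously keeping the residual coupling term small enough to be dominated by $\tfrac12\|\overline u^k-u^k\|^2+\tfrac{\delta_K}{2\beta}\|v^{k+1}-v^k\|^2$ via the stepsize cap $\beta\tau_k^2\|K\|^2\le\delta_K$. The secondary delicate point is controlling $(\theta_k)$: one must verify from $\theta_k^2\le 1+\gamma\theta_{k-1}$ (with $\gamma<1$) and $\theta_0=1$ that $(\theta_k)$ is bounded, which follows since the map $t\mapsto\sqrt{1+\gamma t}$ has a unique fixed point $t^\star>1$ and is a contraction-like increasing map, so $\theta_k\le\max\{\theta_0,t^\star\}$ for all $k$. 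Everything else is routine manipulation with the cosine rule and convexity.
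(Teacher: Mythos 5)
Your overall architecture is the right one (it is essentially the paper's, inherited from the Malitsky--Pock linesearch analysis): prox-characterisations for both updates, the cosine rule, Assumption~\ref{assumption-LS}(ii) to absorb the $H$-term, the cap $\beta\tau_k^2\|K\|^2\leq\deltaB$ from \eqref{init-tau0} for the coupling, and telescoping of $(\varphi_k)$; your parts (ii)--(iii), including the bound on $(\theta_k)$ via $\theta_k\leq\alpha_k\leq\sqrt{1+\gamma\theta_{k-1}}$, are also fine. However, there is a genuine gap precisely at the point you flag as the ``main obstacle'', and the concrete steps you propose there would not deliver \eqref{eq:sufficient decrease}. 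First, you use the prox inequality only for $u^k$ and only at the point $\hat u$. This cannot produce $\|u^{k+1}-\hat u\|^2$ (needed for $\varphi_{k+1}$ as defined in \eqref{def:varphik}), nor the combination $(1+\theta_k)\hat P_{(\hat u,\hat v)}(u^k)-\theta_k\hat P_{(\hat u,\hat v)}(u^{k-1})$ that makes the $\hat P$-terms telescope. The paper's proof needs the $G$-prox inequality three times: for $u^{k+1}$ (evaluated at $\hat u$), and for $u^k$ evaluated at the two points $u^{k+1}$ and $u^{k-1}$; it is exactly this combination (see \eqref{G-ineq}--\eqref{eq:all the Gs} and \eqref{eq:adjoints}) that creates the inner products involving $\overline u^k$ which cancel the coupling $\langle K\overline u^k,\cdot\rangle$ coming from \eqref{eq:prox F}. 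Saying the $\hat P(u^{k-1})$ contribution is ``carried forward'' is not a derivation, and the bookkeeping you abandon mid-sentence is in fact the needed one: $\theta_k\leq\alpha_k$ gives $\tau_k\theta_k\leq\tau_{k-1}\alpha_k^2\leq\tau_{k-1}(1+\gamma\theta_{k-1})$, which is exactly how the slack $-(1-\gamma)\tau_{k-1}\theta_{k-1}\hat P_{(\hat u,\hat v)}(u^{k-1})$ appears.

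Second, your one explicit estimate for the residual coupling is misallocated: bounding (what amounts to) $\tau_k\langle K^*(v^{k+1}-v^k),\overline u^k-u^k\rangle$ by $\tfrac12\|\overline u^k-u^k\|^2+\tfrac{\deltaB}{2\beta}\|v^{k+1}-v^k\|^2$ consumes the term $-\tfrac12\|\overline u^k-u^k\|^2$ that must \emph{survive} in \eqref{eq:sufficient decrease} (it is what yields $\|\overline u^k-u^k\|\to0$ in (iii)). In the paper's proof the Young inequality is applied instead to $\tau_k\langle K^*(v^{k+1}-v^k),\overline u^k-u^{k+1}\rangle$: the resulting $\tfrac12\|\overline u^k-u^{k+1}\|^2$ is cancelled by the matching negative term from the cosine expansion of $\langle\overline u^k-u^k,u^{k+1}-\overline u^k\rangle$, and the resulting $\tfrac{\tau_k^2}{2}\|K^*(v^{k+1}-v^k)\|^2$ is cancelled by the term inserted beforehand via $\tau_k^2\|K\|^2\leq\deltaB/\beta$ in \eqref{eq:desc H}, so that $-\tfrac12\|\overline u^k-u^k\|^2$ and $-\tfrac{1-(\deltaB+\deltaL)}{2\beta}\|v^{k+1}-v^k\|^2$ both remain. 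There is also no room for a cross term ``borrowed at step $k$ and repaid at step $k+1$'': the Lyapunov function \eqref{def:varphik} contains no such term, so all coupling must be absorbed within the same iteration, as above. As written, the plan does not yield the stated inequality without these missing ingredients being supplied.
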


\begin{proof}
In the context of Algorithm~\ref{a:P-D}, Assumption~\ref{assumption-LS}(ii) implies:  \begin{equation} \label{LS-term}
\forall k\in\mathbb{N}, \quad \tau_{k}\left( H(v^{k+1}) - H(v^k) - \langle \nabla H(v^k), v^{k+1} - v^k \rangle \right) \leq \dfrac{\deltaL}{2\beta} \|v^{k+1} - v^k\|^2.     \end{equation}In view of the update rule of $v^{k+1}$ in Algorithm~\ref{a:P-D} and \cite[Proposition~12.26]{bauschke2011convex}, we have
\begin{equation}\label{eq:prox F}
\forall v \in \mathcal{V},\: F(v^{k+1}) + \left\langle \dfrac{1}{\beta \tau_{k}}(v^k-v^{k+1}) + K\overline{u}^k, v - v^{k+1}\right\rangle - \langle \nabla H(v^k), v - v^{k+1} \rangle \leq F(v).
\end{equation} In view of Assumption~\ref{assumption-LS}({i}) {and \eqref{init-tau0}}, $\tau_k^2\|K\|^2\leq \frac{\deltaB}{\beta}$,  and thus
$\tau_k^2\|K^* v^{k+1} - K^* v^k\|^2 \leq \frac{\deltaB}{\beta}\|v^{k+1}-v^k\|^2. $
This estimate together with  condition \eqref{LS-term} and the convexity of $H$ implies
\begin{equation}\label{eq:desc H}
\begin{aligned}
 H(v^{k+1}) 
 &\leq H(v^k) + \langle \nabla H(v^k), v^{k+1} - v^k \rangle +\dfrac{\deltaL}{2\beta\tau_k} \|v^{k+1} - v^k\|^2 \\
 &\leq H(v^k) + \langle \nabla H(v^k), v- v^k \rangle  + \langle \nabla H(v^k), v^{k+1} - v\rangle + \dfrac{\deltaL}{2\beta\tau_k} \|v^{k+1} - v^k\|^2 \\
 &\quad +\dfrac{\deltaB}{2\beta\tau_k} \|v^{k+1} - v^k\|^2  -\frac{\tau_{k}}{2} \|K^* v^{k+1} - K^* v^k\|^2 \\
 &\leq H(v) + \langle \nabla H(v^k), v^{k+1} - v\rangle  + \dfrac{\deltaB+\deltaL}{2\beta\tau_k} \|v^{k+1} - v^k\|^2  - \frac{\tau_{k}}{2} \|K^* v^{k+1} - K^* v^k\|^2.
\end{aligned}
\end{equation}
Thus, combining \eqref{eq:prox F} and \eqref{eq:desc H} yields
\begin{multline} \label{eq:12}
            \tau_k\bigl((F+H)(v^{k+1})-(F+H)(v)\bigr) \\
            \leq \left\langle \dfrac{1}{\beta}(v^{k+1}-v^k) - \tau_k K\overline{u}^k, v - v^{k+1}\right\rangle + \dfrac{\deltaB+\deltaL}{2\beta}\|v^{k+1}-v^k\|^2 - \frac{\tau_{k}^2}{2}\|K^* v^{k+1} - K^* v^k\|^2
\end{multline} for all $v \in \mathcal{V}$. With \eqref{eq:12} established, the remainder of the proof closely follows  \cite[Theorem 3.4]{malitsky2018first}. From the update rule for $u^{k+1}$ in Algorithm~\ref{a:P-D} and \cite[Proposition~12.26]{bauschke2011convex}, we have
\begin{equation} \label{G-ineq}
\forall u \in \mathcal{U}, \: \tau_{k}(G(u^{k+1})-G(u)) \leq \langle u^{k+1}- u^{k}  + \tau_{k}K^*v^{k+1}, u - u^{k+1}\rangle.
 \end{equation}
Using \eqref{G-ineq}, the identity $\tau_k=\theta_k\tau_{k-1}$ and the definition of ${\overline{u}}^k$, we deduce that
\begin{equation}\label{eq:all the Gs}\begin{aligned}
& \tau_k\bigr( (1+\theta_k)G(u^k) - \theta_kG(u^{k-1}) -  G(u)  \big) \\
&= \theta_k \tau_{k-1}\bigr(G(u^k)-G(u^{k+1})\bigr) + \theta_k^2\tau_{k-1}\bigl(G(u^k)-G(u^{k-1})\bigr) + \tau_k\bigl(G(u^{k+1})-G(u)\bigr) \\
&= \theta_k\langle u^{k}- u^{k-1}  + \tau_{k-1}K^*v^{k}, u^{k+1} - u^{k}\rangle
+ \theta_k^2\langle u^{k}- u^{k-1}  + \tau_{k-1}K^*v^{k}, u^{k-1} - u^{k}\rangle\\
&\qquad 
+ \langle u^{k+1}- u^{k}  + \tau_{k}K^*v^{k+1}, u - u^{k+1}\rangle \\
&= \langle {\overline{u}}^{k}- u^k  + \tau_{k}K^*v^{k},u^{k+1} - {\overline{u}}^{k}\rangle + \langle u^{k+1}- u^{k}  + \tau_{k}K^*v^{k+1}, u - u^{k+1}\rangle. 
\end{aligned}\end{equation}
Adding \eqref{eq:12} and \eqref{eq:all the Gs} yields
\begin{multline}\label{eq:pre key}
 \tau_k\big((F+H)(v^{k+1}) - (F+H)(v)   + (1+\theta_k)G(u^k) - \theta_kG(u^{k-1}) -  G(u)  \big) \\
\leq \langle u^{k+1} - u^{k}, u - u^{k+1}\rangle + \dfrac{1}{\beta}\left\langle v^{k+1} - v^k, v - v^{k+1}\right\rangle + \langle \overline{u}^k - u^k, u^{k+1} - \overline{u}^k \rangle 
+ \dfrac{\deltaB+\deltaL}{2\beta}\|v^{k+1}-v^k\|^2\\
- \frac{\tau_{k}^2}{2}\|K^* v^{k+1} - K^* v^k\|^2  +\tau_k\left(\langle   K^*v^{k+1}, u - u^{k+1}\rangle - \left\langle   K\overline{u}^k, {v} - v^{k+1}\right\rangle  + \langle  K^*v^k, u^{k+1} - \overline{u}^k \rangle\right)  ,
\end{multline} for all $(u,v) \in \mathcal{U} \times \mathcal{V}$. Using properties of the adjoint, the inner product terms on last line of \eqref{eq:pre key} can be written as
\begin{equation}\label{eq:adjoints}
\begin{aligned}
&\tau_k \langle  K^*v^k, u^{k+1} - \overline{u}^k \rangle + \tau_k\langle   K^*v^{k+1}, u - u^{k+1}\rangle -\tau_k \left\langle   K\overline{u}^k, v - v^{k+1}\right\rangle  \\
&= \tau_k \langle  K^*v^{k+1}-K^*v^k, \overline{u}^k-u^{k+1} \rangle + \tau_k\langle   K^*v^{k+1}, u - {\overline{u}}^{k}\rangle -\tau_k \left\langle   K\overline{u}^k, v - v^{k+1}\right\rangle \\
&= \tau_k \langle  K^*v^{k+1}-K^*v^k, \overline{u}^k-u^{k+1} \rangle + \tau_k\langle   v^{k+1}-v, Ku  \rangle -\tau_k \left\langle   K\overline{u}^k-Ku, v \right\rangle\\
&= \tau_k \langle  K^*v^{k+1}-K^*v^k, \overline{u}^k-u^{k+1} \rangle + \tau_k\langle   v^{k+1}-v, Ku  \rangle
-\tau_k(1+\theta_k) \left\langle  u^k-u, K^*v \right\rangle\\
&\qquad +\tau_k\theta_k \left\langle   u^{k-1}-u, K^*{v} \right\rangle.
\end{aligned}
\end{equation}

Let $(\hat{u},\hat{v})$ be a saddle point of problem \eqref{MP-problem}, and take $(u,v) = (\hat{u},\hat{v})$. Substituting \eqref{eq:adjoints} into \eqref{eq:pre key}, noting the definitions for $\hat{P}_{(\hat{u},\hat{v})}$ and $\hat{D}_{(\hat{u},\hat{v})}$ from \eqref{primal-dual-gap}, we obtain
\begin{multline*} 
\tau_k \big( \hat{D}_{(\hat{u}, \hat{v})}(v^{k+1}) - \theta_k\hat{P}_{(\hat{u}, \hat{v})}(u^{k-1}) + (1 + \theta_k)\hat{P}_{(\hat{u}, \hat{v})}(u^k) \big)  \\
\leq \langle u^{k+1} - u^{k}, \hat{u} - u^{k+1}\rangle + \dfrac{1}{\beta}\left\langle v^{k+1} - v^k, \hat{v} - v^{k+1}\right\rangle + \langle \overline{u}^k - u^k, u^{k+1} - \overline{u}^k \rangle \\
\quad + \dfrac{\deltaB+\deltaL}{2\beta}\|v^{k+1}-v^k\|^2  + \tau_k \langle  K^*v^{k+1}-K^*v^k, \overline{u}^k-u^{k+1} \rangle- \frac{\tau_{k}^2}{2}\|K^* v^{k+1} - K^* v^k\|^2.
\end{multline*}
Using $\langle a,b \rangle \leq \frac{1}{2}\|a\|^2 + \frac{1}{2}\|b\|^2$ in the last line and the law of cosines in the second line yields \begin{multline*} 
\tau_k \big( \hat{D}_{(\hat{u}, \hat{v})}(v^{k+1}) - \theta_k\hat{P}_{(\hat{u}, \hat{v})}(u^{k-1}) + (1 + \theta_k)\hat{P}_{(\hat{u}, \hat{v})}(u^k) \big)  \\
\leq \dfrac{1}{2}\left( \| u^k - \hat{u}\|^2 - \| u^{k+1} - u^{k} \|^2 - \|u^{k+1} - \hat{u} \|^2  \right) 
+ \dfrac{1}{2\beta}\left( \|v^k - \hat{v} \|^2 - \|v^{k+1} - v^k\|^2 - \| v^{k+1} - \hat{v} \|^2 \right)\\
\quad + \dfrac{1}{2} \left( \| u^{k+1} - u^k \|^2 - 
             \|\overline{u}^k - u^k\|^2 - \|u^{k+1} - \overline{u}^k\|^2 \right) 
   +\dfrac{{\deltaB+\deltaL}}{{2\beta}} \|  v^{k+1} -  v^k\|^2 + \dfrac{1}{2} \| \overline{u}^k - u^{k+1} \|^2 \\
= \dfrac{1}{2}\left( \| u^k - \hat{u}\|^2  - \| u^{k+1} - \hat{u} \|^2  \right) + \dfrac{1}{2\beta}\left( \|v^k - \hat{v} \|^2  - \| v^{k+1} - \hat{v} \|^2 \right) \\ - \dfrac{1}{2}    
            \|\overline{u}^k - u^k\|^2   - \dfrac{1-(\deltaB+\deltaL)}{{2\beta}} \|  v^{k+1} -  v^k\|^2. 
\end{multline*}
From \eqref{init-tau0} and Assumption~\ref{assumption-LS}(i), $\tau_k \leq \tau_{k-1}\alpha_k$, and {$\theta_k = \tau_k \tau_{k-1}^{-1}$ and $\alpha_k \leq \sqrt{1+\gamma \theta_{k-1}}$}, thus $\theta_k \tau_k \leq \tau_{k-1}(1 + \gamma\theta_{k-1})$. Then, $(\varphi_k)$ satisfies the sufficient decrease condition~\eqref{eq:sufficient decrease}.  Hence, $(\varphi_k)$ converges to $\inf_k \varphi_k \geq 0$. The sufficient decrease condition implies  \begin{equation*} \|\overline{u}^k - u^k\|^2 , \: \|  v^{k+1} -  v^k\|^2, \: \tau_k\hat{D}_{(\hat{u}, \hat{v})}(v^{k+1}), \: \tau_{k-1}\theta_{k-1}\hat{P}_{(\hat{u}, \hat{v})}(u^{k-1}) \to 0 \text{ as } k \to +\infty.
\end{equation*}  Furthermore,  since $\hat{P}_{(\hat{u}, \hat{v})}(u^{k-1}) \geq 0$ and $(\varphi_k)$ converges, the sequences $(u^k)$ and $(v^k)$ are bounded, which in turn implies that $(\overline{u}^k)$ is also bounded.
\end{proof}

Global convergence will follow from the previous results, as long as the sequence of stepsizes $(\tau_k)$ does not vanish. This fact determines the next assumption of the abstract linesearch.

\begin{assumption} \label{assumption-LS-iii}
    {Suppose the sequence $(\tau_k)$ generated by Algorithm~\ref{a:P-D} implemented with \texttt{LS} is separated from $0$.}
\end{assumption}

{Assumption~\ref{assumption-LS-iii} is key in ensuring convergence of the iterates. Observe that in the case when the gradient of $H$ is globally Lipschitz continuous, Assumption~\ref{assumption-LS-iii} is satisfied, which can be shown following a similar argument to \cite[Lemma 3.3 (ii)]{malitsky2018first}. Naturally, Assumption~\ref{assumption-LS-iii} may not hold necessarily when $\nabla H$ is locally Lipschitz only. Nevertheless, we will see in Section~\ref{ss:LS-implementation} that Assumption~\ref{assumption-LS-iii} holds in the latter case for the subroutines defined therein, due to boundedness of the iterates. Furthermore,} note that, as a consequence of \eqref{init-tau0} and Assumptions~\ref{assumption-LS}(i) and~\ref{assumption-LS-iii}, since $\theta_k = \tau_k \tau_{k-1}^{-1}$, the sequence $(\theta_k)$ is bounded and separated from zero.


\begin{theorem}
\label{th:abstract-convergence}
Suppose that $F: \mathcal{V} \to \R \cup \{+\infty\}$ and $G: \mathcal{U} \to \R \cup \{+\infty\}$ are proper lsc convex functions, and $H: \mathcal{V} \to \R$ is a differentiable convex function with locally Lipschitz continuous gradient.  Suppose, in addition, Assumptions~\ref{assumption-LS} and~\ref{assumption-LS-iii} hold, and there exists a saddle point of \eqref{MP-problem}.  Then, the sequence $(u^k, v^k)$ generated by Algorithm~\ref{a:P-D} converges to a saddle point $(\hat{u}, \hat{v})$ of problem \eqref{MP-problem}, and  $\left(\hat{\mathcal{G}}_{(\hat{u}, \hat{v})}(u^k,v^{k})\right)$ converges to $0$. 
\end{theorem}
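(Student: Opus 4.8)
The plan is to combine the quantitative estimates from Lemma~\ref{prop:bounded} with a standard Opial-type argument. From Lemma~\ref{prop:bounded}(ii) the sequence $(u^k,v^k)$ is bounded, so it admits a weakly (here, since $\mathcal{U},\mathcal{V}$ are finite-dimensional, strongly) convergent subsequence $(u^{k_j},v^{k_j}) \to (\bar u, \bar v)$. The first task is to show that any such cluster point is a saddle point of \eqref{MP-problem}. For this I would pass to the limit in the subdifferential characterisations \eqref{eq:prox F} and \eqref{G-ineq} of the prox-steps: using Lemma~\ref{prop:bounded}(iii), namely $\|\overline{u}^k - u^k\| \to 0$ and $\|v^{k+1}-v^k\|\to 0$, together with the fact that $(\tau_k)$ and $(\theta_k)$ are bounded and bounded away from $0$ (Assumption~\ref{assumption-LS-iii} and the remark following it), one gets that $\overline{u}^{k_j} \to \bar u$, $u^{k_j+1}\to\bar u$, $v^{k_j+1}\to\bar v$. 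Taking the limit in \eqref{G-ineq} (divided by $\tau_{k_j}$) and in \eqref{eq:prox F}, and using lower semicontinuity of $F$ and $G$ and continuity of $\nabla H$, yields exactly the two inequalities in \eqref{def:saddle-point-2} with $(\hat u,\hat v)$ replaced by $(\bar u,\bar v)$; hence $(\bar u,\bar v)$ is a saddle point. (Alternatively, one can argue directly that $\hat D_{(\hat u,\hat v)}(v^{k_j+1})\to 0$ and $\hat P_{(\hat u,\hat v)}(u^{k_j})\to 0$ for a fixed reference saddle point via Lemma~\ref{prop:bounded}(iii) combined with $\tau_k,\theta_k$ bounded below, and then use convexity to conclude.)

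The second task is to upgrade subsequential convergence to convergence of the whole sequence. Here I would apply Lemma~\ref{prop:bounded}(i) \emph{with the cluster point $(\bar u,\bar v)$ itself as the saddle point} $(\hat u,\hat v)$: this is legitimate precisely because we have just shown $(\bar u,\bar v)$ is a saddle point. Then \eqref{eq:sufficient decrease} shows that the corresponding sequence $(\varphi_k)$ — call it $\varphi_k^{(\bar u,\bar v)}$ — is nonincreasing and bounded below, hence convergent. Since $\hat P_{(\bar u,\bar v)}(u^{k-1})\geq 0$ and $\tau_{k-1}(1+\theta_{k-1})\geq 0$, we have
\[
\tfrac12\|u^k-\bar u\|^2 + \tfrac1{2\beta}\|v^k-\bar v\|^2 \;\leq\; \varphi_k^{(\bar u,\bar v)},
\]
and along the subsequence $k_j$ the right-hand side tends to $\lim_k \varphi_k^{(\bar u,\bar v)}$ while the left-hand side tends to $0$ (using also that $\tau_{k_j-1}(1+\theta_{k_j-1})\hat P_{(\bar u,\bar v)}(u^{k_j-1})\to 0$, which follows from Lemma~\ref{prop:bounded}(iii) since $\tau_{k-1}\theta_{k-1}\hat P_{(\bar u,\bar v)}(u^{k-1})\to 0$ and $\theta_k$ is bounded below, so $\tau_{k-1}\hat P\to 0$, and $\tau_{k-1}(1+\theta_{k-1})\hat P\to0$). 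Therefore $\lim_k\varphi_k^{(\bar u,\bar v)}=0$, which forces $\|u^k-\bar u\|^2+\tfrac1\beta\|v^k-\bar v\|^2\to 0$, i.e. $(u^k,v^k)\to(\bar u,\bar v)$.

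Finally, for the primal-dual gap: since $(u^k,v^k)\to(\hat u,\hat v):=(\bar u,\bar v)$, and $\overline{u}^k-u^k\to0$, $v^{k+1}-v^k\to0$, the gap $\hat{\mathcal G}_{(\hat u,\hat v)}(u^k,v^k)=\hat P_{(\hat u,\hat v)}(u^k)+\hat D_{(\hat u,\hat v)}(v^k)$ can be controlled by passing to the limit: $\hat P_{(\hat u,\hat v)}(u^k)= G(u^k)-G(\hat u)+\langle K^*\hat v,u^k-\hat u\rangle$, and by lower semicontinuity $\liminf_k G(u^k)\geq G(\hat u)$ giving $\liminf \hat P\geq 0$; for the reverse, one uses $\tau_{k-1}\theta_{k-1}\hat P_{(\hat u,\hat v)}(u^{k-1})\to 0$ from Lemma~\ref{prop:bounded}(iii) together with $\tau_{k-1}\theta_{k-1}$ bounded away from $0$ to get $\hat P_{(\hat u,\hat v)}(u^k)\to 0$, and similarly $\tau_k\hat D_{(\hat u,\hat v)}(v^{k+1})\to 0$ with $\tau_k$ bounded below gives $\hat D_{(\hat u,\hat v)}(v^{k+1})\to 0$, and $\hat D_{(\hat u,\hat v)}(v^k)\to0$ follows since $v^{k+1}-v^k\to0$ and $F+H$ is continuous on the relevant set (or again by lsc plus the vanishing-along-shift argument). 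Hence $\hat{\mathcal G}_{(\hat u,\hat v)}(u^k,v^k)\to 0$.

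The main obstacle I anticipate is the first task — showing cluster points are saddle points — and within it, the careful handling of the prox-inequality limits when the stepsizes $\tau_k$ only converge along subsequences and $F$, $G$ are merely lsc (so one only gets the $\liminf$ inequality in one direction, which is nonetheless exactly what the saddle-point inequalities \eqref{def:saddle-point-2} require). One must also make sure the terms $\tau_{k_j}K^*v^{k_j}$ etc.\ converge, which they do since $\|K\|$ is finite and $(\tau_k)$ is bounded; and that the factor $\theta_k$ appearing in $\overline{u}^k = u^k+\theta_k(u^k-u^{k-1})$ does not spoil $\overline{u}^{k_j}\to\bar u$, which is guaranteed by boundedness of $(\theta_k)$ combined with $\|u^k-u^{k-1}\|\to0$ (the latter obtained from $\|\overline u^k-u^k\|=\theta_k\|u^k-u^{k-1}\|\to0$ and $\theta_k$ bounded below). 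A clean way to organise all of this is to prove an intermediate claim that $\|u^{k+1}-u^k\|\to0$ and then invoke a generic Opial lemma; I would state that claim explicitly before the main argument.
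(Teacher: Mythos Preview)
Your proposal is correct and follows essentially the same route as the paper's proof: pass to the limit in the prox-step inequalities \eqref{eq:12} and \eqref{G-ineq} (divided by $\tau_k$) along a convergent subsequence to show cluster points are saddle points, then use Lemma~\ref{prop:bounded}(iii) together with $(\tau_k),(\theta_k)$ bounded away from zero to get $\hat P,\hat D\to 0$, and finally use convergence of $(\varphi_k)$ with the cluster point playing the role of $(\hat u,\hat v)$ to force $\tilde\varphi=0$ and hence full-sequence convergence. The only cosmetic difference is that the paper establishes the gap convergence before the full-sequence convergence, whereas you do it afterwards; both orderings work.
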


\begin{proof}

    We first prove that any cluster point of $(u^k, v^k)$ is a saddle point. Let $(\hat{u}, \hat{v})$ be a cluster point of $(u^k, v^k)$ which is bounded due to Lemma~\ref{prop:bounded}(ii). Then there exists a subsequence such that $(u^{\ell_k}, v^{\ell_k}) \to (\hat{u}, \hat{v})$ as $k \to +\infty$.
    From \eqref{eq:12} and \eqref{G-ineq}, we have \begin{equation*}
    \begin{array}{rcl}
       (F+H)(v^{\ell_k+1})-(F+H)(v)&
            \leq & \left\langle \dfrac{1}{\beta \tau_{\ell_k}}(v^{\ell_k+1}-v^{\ell_k}) -  K\overline{u}^{\ell_k}, v - v^{\ell_k+1}\right\rangle + \dfrac{\deltaL}{\beta\tau_{\ell_k}}\|v^{\ell_k+1}-v^{\ell_k}\|^2  \\
         G(u^{\ell_k+1})-G(u)& \leq &\left\langle \dfrac{1}{\tau_{\ell_k}}(u^{\ell_k+1}- u^{\ell_k})  + K^*v^{\ell_k+1}, u - u^{\ell_k+1}\right\rangle,
    \end{array} 
\end{equation*} for all $(u,v) \in \mathcal{U}\times\mathcal{V}$. In view of Assumptions~\ref{assumption-LS}(i) and~\ref{assumption-LS-iii} and Lemma~\ref{prop:bounded}(iii), taking the limit as $k \to +\infty$, yields $\dfrac{1}{\beta \tau_{\ell_k}}(v^{\ell_k+1}-v^{\ell_k}) \to 0$ and $ \dfrac{1}{\tau_{\ell_k}}(u^{\ell_k+1}- u^{\ell_k})  \to 0$, and thus $\hat{P}_{(\hat{u},\hat{v})}(u) , \hat{D}_{(\hat{u},\hat{v})}(v) \geq 0$ for all $(u,v) \in \mathcal{U}\times\mathcal{V}$. That $(\hat{u}, \hat{v})$ is a saddle point for problem \eqref{MP-problem} follows from \eqref{def:saddle-point-2}. Furthermore, substituting $\ell_k$ with $\ell$ in the above set of inequalities and $(u,v) = (\hat{u}, \hat{v})$, taking the limit as $\ell \to +\infty$, since both $(\tau_k)$ and $(\theta_k)$ are separated from zero, Lemma~\ref{prop:bounded}(iii) implies $\hat{D}_{(\hat{u}, \hat{v})}(v^{k}) \to 0$ and $\hat{P}_{(\hat{u}, \hat{v})}(u^{k}) \to 0$, so that $\hat{\mathcal{G}}_{(\hat{u}, \hat{v})}(u^k,v^{k}) \to 0$ follows from~\eqref{primal-dual-gap}.    
Finally,  we prove that the whole sequence $(u^k, v^k)$ converges to $(\hat{u}, \hat{v})$. Since $(\varphi_k)$ defined in \eqref{def:varphik} (monotonically) converges to some $\tilde{\varphi} \geq 0$, then $$\dfrac{1}{2} \| u^k - \hat{u}\|^2   + \dfrac{1}{2\beta} \|v^k - \hat{v} \|^2 \to \tilde{\varphi} \text{ as } k \to +\infty.$$ Taking the limit along the subsequence $(u^{\ell_k}, v^{\ell_k})$ as $k \to +\infty$ in the previous equation, yields $\tilde\varphi=0$. Thus $u^k \to \hat{u}$ and $v^k \to \hat{v}$ as $k \to +\infty$. 
\end{proof}

\begin{remark} \label{r:3.5}

Some comments regarding the proof of the result above are in order.

\begin{enumerate} 
    \item 

    By using with the argument in \cite[Theorem~3.5]{malitsky2018first}, it is possible to derive the following ergodic rate for the primal-dual gap in Theorem~\ref{th:abstract-convergence}:
 $$ \hat{\mathcal{G}}_{(\hat{u}, \hat{v})}(U^K,V^{K}) \leq \frac{1}{s_K}\left(\tau_1\theta_1\hat{P}_{(\hat{u},\hat{v})}(u^0)+\frac{1}{2}\|u^1-\hat{u}\|^2 + \frac{1}{2\beta}\|v^1-\hat{v}\|^2\right), $$
where $s_K=\sum_{k=1}^K\tau_k$, $U^K=\dfrac{\tau_1\theta_0u^0+\sum_{k=1}^K\tau_k{\overline{u}}^k}{\tau_1\theta_1+s_N}$ and $V^K=\dfrac{\sum_{k=1}^K\tau_kv^k}{s_K}$. {Furthermore, note that in view of Assumption~\ref{assumption-LS-iii}, there exists $\tau_{\min} >0$, such that for all $k \geq 1$, $s_k \geq k \tau_{\min}$, and thus  $\hat{\mathcal{G}}_{(\hat{u}, \hat{v})}(U^k,V^{k}) = O(\frac{1}{k})$.}

{\item Similarly to \cite[Section~4]{malitsky2018first}, it is possible to modify Algorithm~\ref{a:P-D} to improve the convergence results in the strongly convex case. If $F$ is $\nu$-strongly convex for $\nu >0$, given $\beta_0 >0$, then defining $$\beta_k = \dfrac{\beta_{k-1}}{1 + \nu \beta_{k-1}\tau_{k-1}}$$ in iteration $k$ of Algorithm~\ref{a:P-D} Step 1, and substituting $\beta$ with $\beta_k$ in \eqref{not-LS-cond}, yields $\|v^{k} - \hat{v}\| = O(\frac{1}{k})$ and $\hat{\mathcal{G}}_{(\hat{u}, \hat{v})}(U^k,V^{k}) = O(\frac{1}{k^2})$ (cf. Remark~\ref{r:3.5}(i)) as in \cite[Theorem 4.2]{malitsky2018first}. 
As for the modification of Algorithm~\ref{a:P-D} when $G$ is strongly convex, we refer to \cite[Section~4.1]{malitsky2018first}. However, the latter variation is not meaningful in our case, since our main focus in Section~\ref{s:distributed} is the distributed setting and the assumption does not hold, see \eqref{eq:identifications}.
}

\item 
The introduction of the parameter $\gamma \in (0,1)$ in Step 2 of Algorithm~\ref{a:P-D} is crucial to guarantee global convergence of the sequence. Otherwise, if $\gamma =1$, we would only be able to prove subsequential convergence to saddle points. Indeed, setting $\gamma=1$ in \eqref{eq:sufficient decrease} would prevent us from concluding that $(\tau_{k}\theta_{k}\hat{P}_{(\hat{u}, \hat{v})}(u^{k}))$ converges to $0$, and thus we would only be able to guarantee that $\left(\dfrac{1}{2} \| u^k - \hat{u}\|^2   + \dfrac{1}{2\beta} \|v^k - \hat{v} \|^2 \right)$ converges (but not necessarily to $0$). In this case, an alternative is to assume, in addition, that $G$ is continuous on its domain as in \cite[Theorem 3.4]{malitsky2018first}.

\end{enumerate} 
\end{remark}

\subsection{A linesearch procedure for primal-dual splitting methods} \label{ss:LS-implementation}

In this section, we propose a natural implementation in Subroutine~\ref{LS:0} of \texttt{LS} to satisfy Assumptions~\ref{assumption-LS} and~\ref{assumption-LS-iii}. 

 \renewcommand{\algorithmcfname}{Subroutine}
\begin{algorithm}[!htb]
\caption{Linesearch procedure \texttt{LS} for Algorithm~\ref{a:P-D}.\label{LS:0}}

\textbf{Input}: \texttt{param} $=\{\beta >0, \delta_L \in (0,1), \mu \in (0,1)\}$, $\tau_{k(0)}$, $\tau_{k-1}$, $u^{k-1}$, $u^k$, $v^k$.

\For{$j= 0,1, \dots$}{
\step{Primal-dual trial points} Set $\theta_{k(j)} = \tau_{k(j)} \tau_{k-1}^{-1}$, and \begin{align*}
    \overline{u}^{k(j)}  &= u^k + \theta_{k(j)}(u^k - u^{k-1}) \\
    v^{k(j)+1} &= \prox_{\beta \tau_{k(j)} F}\big( v^k + \beta \tau_{k(j)} [K\overline{u}^{k(j)} - \nabla H(v^k)]\big)
\end{align*}

\step{Backtracking linesearch test} If \begin{equation} \label{not-LS-cond}
            \tau_{k(j)}\left( H(v^{k(j)+1}) - H(v^k) - \langle \nabla H(v^k), v^{k(j)+1} - v^k \rangle \right) > \dfrac{\deltaL}{2\beta} \|v^{k(j)+1} - v^k\|^2,
        \end{equation}  set $\tau_{k(j+1)} = \mu \tau_{k(j)}$, update $j \leftarrow j+1$, and go back to Step~1. Otherwise, set $\tau_{k} = \tau_{k(j)}$, $v^{k+1} = v^{k(j)+1} $, and exit the loop. 
}
\textbf{Output}: $\tau_k, v^{k+1}$. 
\end{algorithm}

\begin{remark}

Some comments regarding the parameters in Subroutine~\ref{LS:0} are in order. 
\begin{enumerate}
\item The shrinking parameter $\mu \in (0,1)$ is used in the backtracking linesearch to find a stepsize of adequate length  to satisfy \eqref{LS-term}. As for $\beta >0$, it is used to balance the primal and dual stepsizes. The parameter sequence $(\alpha_k)$ and the parameter $\gamma \in (0, 1)$  give flexibility to the stepsizes. The initialisation in \eqref{init-tau0}   assures that the stepsize sequence remains bounded, but possibly increase it if larger stepsizes are allowed. 

\item  Suppose that, in \eqref{init-tau0},  $\alpha_k = 1$ for all $k \geq 1$, and that the linesearch \texttt{LS} loop terminates in a finite number of steps in each iteration. Then, the sequence of stepsizes $(\tau_k)$  is nonincreasing and bounded above.  In this case, it suffices to initialise the outer loop with $\tau_{0} = \frac{\sqrt{\deltaB}}{\sqrt{\beta}\|K\|}$, and set $\tau_{k(0)} = \tau_{k-1}$ for condition \eqref{init-tau0} to always hold. Furthermore, the stepsize sequence is also eventually constant, and hence our proposed method eventually reduces to \cite[Algorithm 3.2]{condat2013primal}. To see this, suppose, to the contrary, that the stepsize sequence decreases an infinite number of times. By construction, each decrease must be at least by a factor of $\rho\in(0,1)$. This implies that $(\tau_k)$ converges to $0$, a contradiction.

\item Let us assume that after finitely many iterations of Algorithm~\ref{a:P-D}, the stepsizes $
\tau_k$ become constant equal to $\tau$. In view of \eqref{init-tau0}, it holds $ \beta \tau^2 \|K\|^2 \leq \deltaB$. Furthermore, the linesearch condition \eqref{not-LS-cond} suggest that an approximation of the equivalent of a Lipschitz constant is $\frac{\deltaL}{\beta\tau}$. Therefore, for $\sigma = \beta \tau$, the condition on the stepsize of the Condat--V\~u method reads \[ \frac{\frac{\deltaL}{\beta\tau} \sigma}{2} + \beta \tau^2 \|K\|^2 \leq \frac{\deltaL}{2} + \deltaB < 1 - \frac{\deltaL}{2}. \] Hence, if the linesearch condition eventually finds constant stepsizes, it implies the bounds on the stepsizes in~\cite{condat2013primal,vu2013splitting}, as the right-hand side in the above estimate is strictly smaller than $1$. Therefore, we retrieve convergence to saddle points for this special case.

\end{enumerate}
\end{remark}

If $\nabla H$ is globally Lipschitz continuous with constant $L>0$, the inequality \eqref{LS-term} holds whenever $\tau_{k} \leq \frac{\deltaL}{\beta L}$ by virtue of Lemma~\ref{descent-lemma}. This estimate is valid after finitely many steps in the linesearch procedure, since $(\tau_{k(j)})_{j\in \N}$ is a decreasing sequence and hence the linesearch is well-defined for globally Lipschitz continuous gradients.  The analogous result when $\nabla H$ is only locally Lipschitz continuous is presented in the following lemma.

\begin{lemma} \label{lemma-asmp-hold}
Suppose that $F: \mathcal{V} \to \R \cup \{+\infty\} $ and $G: \mathcal{U} \to \R \cup \{+\infty\}$ are proper lsc convex functions, and $H: \mathcal{V} \to \R$ is a differentiable convex function with locally Lipschitz continuous gradient. Then{, for the sequences $(u^k)$, $(v^k)$ and $(\tau_k)$  generated by Algorithm~\ref{a:P-D} using Subroutine~\ref{LS:0}}, the following hold:

\begin{enumerate}
    \item For each $k \geq 1$, there exists $\xi_k >0$ such that \begin{equation} \label{LS-term-t} 
\forall t \in (0, \xi_k],\quad
t \left( H(v^{k+1}(t)) - H(v^k) - \langle \nabla H(v^k), v^{k+1}(t) - v^k \rangle \right) \leq \dfrac{\deltaL}{2\beta} \|v^{k+1}(t) - v^k\|^2,
\end{equation} 
where $\overline{u}^{k}(t) = u^k + t\tau_{k-1}^{-1}(u^k - u^{k-1}) $ and $v^{k+1}(t) = \prox_{\beta t F}( v^k + \beta t [K\overline{u}^{k}(t) - \nabla H(v^k)])$.

 \item The linesearch in Subroutine~\ref{LS:0} terminates in finitely many iterations with $\tau_k \leq \tau_{k(0)}$.

\item The sequence $(\tau_k)$  
is bounded and separated from $0$. 
\end{enumerate}
   In particular, the implementation of \texttt{LS} in Subroutine~\ref{LS:0} is well-defined and satisfies Assumptions~\ref{assumption-LS} and~\ref{assumption-LS-iii}. 
\end{lemma}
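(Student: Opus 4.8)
The plan is to prove (i), (ii) and (iii) in that order; the concluding ``in particular'' statement is then automatic, since (ii) shows that Subroutine~\ref{LS:0} conforms to Assumption~\ref{assumption-LS} (its parameter $\tau_{\text{init}}$ is $\tau_{k(0)}$, and the exit test \eqref{not-LS-cond} is precisely the negation of Assumption~\ref{assumption-LS}(ii)), while (iii) is verbatim Assumption~\ref{assumption-LS-iii}.

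For (i), I would fix $k\geq1$ and note that $t\mapsto\overline{u}^k(t)$ is affine, so $t\mapsto w^k(t):=v^k+\beta t\bigl(K\overline{u}^k(t)-\nabla H(v^k)\bigr)$ is continuous with $w^k(0)=v^k$; Lemma~\ref{lemma:prox-extension} then gives $v^{k+1}(t)=\prox_{\beta t F}(w^k(t))\to\proj_{\overline{\dom F}}(v^k)=:\overline{v}$ as $t\to0^+$. If $\overline{v}=v^k$ — which is automatic for $k\geq2$, where $v^k$ is the output of a prox step and hence lies in $\dom F$ — then $v^{k+1}(t)\to v^k$, so for some $\e_k>0$ the set $\{v^{k+1}(t):t\in(0,\e_k]\}\cup\{v^k\}$ lies in a bounded open convex set on which $\nabla H$ is Lipschitz with some $L_k>0$ (local Lipschitzness of $\nabla H$ plus compactness, as noted after Lemma~\ref{descent-lemma}), and \eqref{LS-term-t} follows from Lemma~\ref{descent-lemma} for every $t\leq\xi_k:=\min\{\e_k,\deltaL/(\beta L_k)\}$. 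If instead $\overline{v}\neq v^k$, then as $t\to0^+$ the right side of \eqref{LS-term-t} tends to the positive number $\tfrac{\deltaL}{2\beta}\|\overline{v}-v^k\|^2$ while its left side tends to $0$ (because $t\to0$ and $H$ is continuous), so \eqref{LS-term-t} holds for all small $t$; in either case a suitable $\xi_k>0$ exists. Part (ii) is then immediate: the trial stepsizes in Subroutine~\ref{LS:0} are $\tau_{k(j)}=\mu^j\tau_{k(0)}$, which decrease to $0$, so by (i) the test \eqref{not-LS-cond} must fail once $\tau_{k(j)}\leq\xi_k$, i.e.\ after finitely many iterations, and the loop returns $\tau_k=\tau_{k(j)}\leq\tau_{k(0)}$.

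For (iii), the upper bound is read off \eqref{init-tau0}: $\tau_k\leq\tau_{k(0)}\leq\bar\tau:=\sqrt{\deltaB}/(\sqrt{\beta}\,\|K\|)$. For the lower bound — the substantive point — I would first use (ii) (so Assumption~\ref{assumption-LS} holds) together with the standing existence of a saddle point of \eqref{MP-problem} to invoke Lemma~\ref{prop:bounded}(ii), obtaining boundedness of $(u^k)$, $(\overline{u}^k)$, $(v^k)$; and I would note $\theta_k=\tau_k\tau_{k-1}^{-1}\leq\alpha_k\leq\sqrt{1+\gamma\theta_{k-1}}$ with $\theta_0=1$, which forces $\theta_k\leq\overline{\theta}$ for all $k$, where $\overline{\theta}$ is the positive root of $x^2=1+\gamma x$. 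Then, for every $k$ and every $t\in(0,\tau_{k(0)}]$: the point $\overline{u}^k(t)=u^k+t\tau_{k-1}^{-1}(u^k-u^{k-1})$ lies in a fixed bounded set (since $t\tau_{k-1}^{-1}\leq\overline{\theta}$ and $(u^k)$ is bounded); hence $w^k(t)$ lies in a fixed bounded set (using also that the continuous map $\nabla H$ is bounded on the bounded set $\{v^k\}$); and hence $v^{k+1}(t)=\prox_{\beta t F}(w^k(t))$ lies in a fixed bounded set (because the continuous map of Lemma~\ref{lemma:prox-extension} is bounded on the compact set $[0,\beta\bar\tau]\times\overline{B}$ containing every pair $(\beta t,w^k(t))$). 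Enclosing all of $\{v^k\}$ and $\{v^{k+1}(t)\}$ in one bounded open convex set and letting $L$ be a Lipschitz constant of $\nabla H$ there, Lemma~\ref{descent-lemma} shows, \emph{uniformly in $k$}, that \eqref{not-LS-cond} fails whenever $t\leq\xi:=\deltaL/(\beta L)$. Running the backtracking at step $k$: if $\tau_{k(0)}\leq\xi$ then $\tau_k=\tau_{k(0)}\geq\min\{\bar\tau,\tau_{k-1}\}$ (as $\alpha_k\geq1$), while if $\tau_{k(0)}>\xi$ then the last rejected trial $\tau_k/\mu$ satisfies $\tau_k/\mu>\xi$; either way $\tau_k\geq\min\{\bar\tau,\tau_{k-1},\mu\xi\}$, and induction from $\tau_0>0$ yields $\tau_k\geq\min\{\bar\tau,\tau_0,\mu\xi\}>0$ for all $k$ (the single step $k=1$ with $v^1\notin\overline{\dom F}$, if it arises, being handled with $\xi_1$ from (i) in place of $\xi$, which only lowers the bound to another positive number). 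Hence $(\tau_k)$ is separated from $0$.

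The step I expect to be the main obstacle is the lower bound in (iii). Unlike the globally Lipschitz situation, the test \eqref{not-LS-cond} has no monotonicity in $t$, so ``bounded away from $0$'' cannot be deduced from the per-iteration thresholds $\xi_k$ of (i): one needs a single threshold $\xi$ valid for all $k$, and obtaining one forces me to confine every trial triple $(\overline{u}^k(t),w^k(t),v^{k+1}(t))$ to one fixed compact region. That confinement rests on the a priori boundedness of the whole trajectory (hence on Lemma~\ref{prop:bounded} and the existence of a saddle point), on the uniform bound $\theta_k\leq\overline{\theta}$, and on Lemma~\ref{lemma:prox-extension} to keep the trial duals bounded for $t$ near $0$. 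A secondary, minor nuisance in (i) and (iii) is the degenerate possibility $v^k\notin\overline{\dom F}$, which can occur only at $k=1$ and is treated separately via Lemma~\ref{lemma:prox-extension}.
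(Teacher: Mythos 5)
Your proposal is correct. Parts (i) and (ii) follow essentially the paper's own route: use Lemma~\ref{lemma:prox-extension} to confine $v^{k+1}(t)$ near $v^k$ (respectively near $\proj_{\overline{\dom F}}(v^k)$) for small $t$, then invoke local Lipschitz continuity and Lemma~\ref{descent-lemma} to get a per-iteration threshold $\xi_k$, after which finite termination of the backtracking with $\tau_k\leq\tau_{k(0)}$ is immediate; your case split $\overline{v}=v^k$ versus $\overline{v}\neq v^k$ is if anything slightly more careful than the paper's single argument on the set $C_k=\proj_{\overline{\dom\partial F}}(v^k)+B(0,1)$.

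For (iii) you take a genuinely different route. The paper argues by contradiction: assuming $\tau_{k_\ell}\to0$, it considers only the \emph{last rejected} trial stepsizes $\hat\tau_{k_\ell}=\mu^{-1}\tau_{k_\ell}$ and the associated trial points $\hat v^{k_\ell+1}$, bounds these via Lemma~\ref{prop:bounded} and Lemma~\ref{lemma:prox-extension} (continuous image of a bounded sequence), and applies Lemma~\ref{descent-lemma} on a bounded convex set containing them to contradict the failed test \eqref{eq:hat tau}. You instead build a single acceptance threshold $\xi=\deltaL/(\beta L)$ valid uniformly in $k$ and in \emph{all} trial stepsizes $t\in(0,\tau_{k(0)}]$, by confining every trial triple to one compact region (using boundedness of $(u^k),(v^k)$ from Lemma~\ref{prop:bounded}, the bound $t\tau_{k-1}^{-1}\leq\alpha_k\leq\overline{\theta}$, and Lemma~\ref{lemma:prox-extension} on a compact parameter set), and then close with the recursion $\tau_k\geq\min\{\bar\tau,\tau_{k-1},\mu\xi\}$. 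The ingredients are identical (so neither argument is circular: Assumption~\ref{assumption-LS} is secured through (i)--(ii) before Lemma~\ref{prop:bounded} is invoked, both relying on the standing saddle-point assumption of Section~\ref{s:splitting-LS}), but the packaging differs: the paper's contradiction is shorter, needing bounds only along one subsequence of rejected trials, whereas your direct argument yields an explicit lower bound $\min\{\bar\tau,\tau_0,\mu\xi\}$ and transparently covers iterations in which the initial trial is accepted without any backtracking, a case the paper's invocation of the rejected trial $\mu^{-1}\tau_{k_\ell}$ tacitly glosses over. Your separate treatment of $k=1$ in (iii) is unnecessary (the uniform descent-lemma bound applies to $v^1$ as well, whether or not $v^1\in\overline{\dom F}$), but it is harmless.
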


\begin{proof}
    In order to prove (i), fix $k\geq 1$. In view of Lemma~\ref{lemma:prox-extension}, there exists $\eta_k >0$ such that, for all $t \in (0, \eta_k]$, we have $v^{k+1}(t) \in C_k :=\proj_{\overline{\dom(\partial F)}}(v^k) + B(0, 1)$, where  $B(0,1)$ denotes the open ball centred at $0$ with radius $1$.   Moreover, as $\nabla H$ is locally Lipschitz continuous and $\mathcal{V}$ is finite dimensional, $\nabla H$ is Lipschitz continuous on the open bounded convex set $C_k$. Thus, according to Lemma~\ref{descent-lemma}, there exists $L_k >0$ such that 
$$ \forall v\in C_k,\quad \left( H(v) - H(v^k) - \langle \nabla H(v^k),v - v^k \rangle \right) \leq \frac{L_k}{2} \|v - v^k\|^2. $$ Taking $\xi_k = \min\{\eta_k, \frac{\deltaL}{\beta L_k}\}$ yields \eqref{LS-term-t}.  Since $\mu \in (0,1)$, there exists $j_k \geq 1$ such that $\tau_{k(j)} = \tau_{k(0)} \mu^{j}\leq \xi_k$ for all $j \geq j_k$. This shows that the linesearch terminates after $j_k$ iterations. Furthermore, since $\tau_k = \tau_{k(j_k)} = \tau_{k(0)}\mu^{j_k} \leq \tau_{k(0)}$, then Assumption~\ref{assumption-LS}(i) holds with $\tau_{\text{init}} = \tau_{k(0)}$, proving (ii). By taking $t=\tau_k$ in \eqref{LS-term-t}, Assumption~\ref{assumption-LS}(ii) follows{, and thus Assumption~\ref{assumption-LS} holds}. Next, as a consequence of \eqref{init-tau0}, $(\tau_k)$ is bounded above by $\sqrt{\deltaB}/(\sqrt{\beta}\|K\|)$. Suppose, by way of a contradiction, that $(\tau_k)$ is not separated from zero. Then there exists a decreasing subsequence $(\tau_{k_\ell})_\ell$ with $\tau_{k_\ell}\to 0$ as $\ell\to+\infty$. Set $\hat{\tau}_{k_\ell}=\mu^{-1}\tau_{k_\ell}$ and set
$$ \hat{u}^{k_\ell}=u^k+\hat{\tau}_{k_\ell}\tau_{k-1}^{-1}(u^k-u^{k-1}),\quad \hat{v}^{k_\ell+1}=\prox_{\beta \hat{\tau}_{k_\ell}F}\big( v^k + \beta \hat{\tau}_{k_\ell} [K\hat{u}^{{k_\ell}} - \nabla H(v^{k_\ell})]\big).$$
Due to the linesearch procedure in Subroutine~\ref{LS:0}, we obtain
\begin{equation}\label{eq:hat tau}
\dfrac{\deltaL}{2\beta}\| \hat{v}^{{k_\ell}+1} - v^{k_\ell} \|^2 < \hat{\tau}_{k_\ell}\left( H(\hat{v}^{{k_\ell}+1}) - H(v^{k_\ell}) - \langle \nabla H(v^{k_\ell}), \hat{v}^{{k_\ell}+1} - v^{k_\ell} \rangle \right). 
\end{equation}
{As Assumption~\ref{assumption-LS} holds, b}y Lemma~\ref{prop:bounded}, the sequences $(u^k)$ and $(v^k)$ are bounded. Thus, appealing to Lemma~\ref{lemma:prox-extension}, $(\hat{v}^{k_\ell+1})_\ell$ is bounded as the continuous image of a bounded sequence. Let $C\subseteq\mathcal{V}$ denote any bounded open convex set containing $(v^{k_\ell})_\ell\cup(\hat{v}^{k_\ell+1})_l$. By applying Lemma~\ref{descent-lemma}, there exists $L_C>0$ such that
\begin{equation}\label{eq:hat tau des}
    H(\hat{v}^{k_\ell+1}) - H(v^{k_\ell}) - \langle \nabla H(v^{k_\ell}), \hat{v}^{k_\ell+1} - v^{k_\ell} \rangle \leq \dfrac{L_C}{2}\| \hat{v}^{k_\ell+1} - v^{k_\ell} \|^2.
\end{equation}
Combining \eqref{eq:hat tau} and \eqref{eq:hat tau des} yields the contradiction $\hat{\tau}_{k_\ell}>\deltaL/(\beta L_C)$, and hence we conclude that $(\tau_k)$ is separated from zero. The fact that $\theta_k=\tau_k/\tau_{k-1}$ is also bounded above and separate from zero follows, showing (iii) holds, from where Assumption~\ref{assumption-LS-iii} follows.
\end{proof}

\begin{corollary}[Primal-dual convergence] 
\label{th:convergence-one-agent}
Suppose that $F: \mathcal{V} \to \R \cup \{+\infty\}$ and $G: \mathcal{U} \to \R \cup \{+\infty\}$ are proper lsc convex functions, and $H: \mathcal{V} \to \R$ is a differentiable convex function with locally Lipschitz continuous gradient. Then, the sequence $(u^k, v^k)$ generated by Algorithm~\ref{a:P-D} with \texttt{LS} implemented with Subroutine~\ref{LS:0}, converges to saddle point $(\hat{u}, \hat{v})$ of problem \eqref{MP-problem}, and  $\left(\hat{\mathcal{G}}_{(\hat{u}, \hat{v})}(u^k,v^{k})\right)$ converges to $0$. 
\end{corollary}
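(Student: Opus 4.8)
The plan is to derive Corollary~\ref{th:convergence-one-agent} as a direct consequence of Theorem~\ref{th:abstract-convergence} together with Lemma~\ref{lemma-asmp-hold}. The observation is that Theorem~\ref{th:abstract-convergence} gives convergence of $(u^k,v^k)$ to a saddle point, and vanishing of the primal-dual gap, for \emph{any} linesearch procedure \texttt{LS} satisfying Assumptions~\ref{assumption-LS} and~\ref{assumption-LS-iii}, under the standing hypotheses that $F,G$ are proper lsc convex and $H$ is convex differentiable with locally Lipschitz gradient, and that a saddle point exists. Thus it suffices to verify that the concrete implementation of \texttt{LS} in Subroutine~\ref{LS:0} meets all the requirements of that theorem.

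First I would note that the hypotheses on $F$, $G$, $H$ are exactly those assumed in the corollary. The existence of a saddle point is part of the standing assumptions on problem~\eqref{MP-problem} stated at the start of Section~\ref{s:splitting-LS} (namely, that $\|K\|$ is known and a saddle point exists), so this is available. Next, Lemma~\ref{lemma-asmp-hold} establishes precisely the two remaining ingredients: part~(ii) shows the linesearch in Subroutine~\ref{LS:0} terminates in finitely many iterations with $\tau_k\leq\tau_{k(0)}$, so that \texttt{LS} is well-defined and Assumption~\ref{assumption-LS}(i) holds with $\tau_{\text{init}}=\tau_{k(0)}$; combining this with part~(i) (via the choice $t=\tau_k$) gives Assumption~\ref{assumption-LS}(ii); and part~(iii) shows $(\tau_k)$ is bounded and separated from zero, which is exactly Assumption~\ref{assumption-LS-iii}. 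The concluding sentence of Lemma~\ref{lemma-asmp-hold} records this as ``the implementation of \texttt{LS} in Subroutine~\ref{LS:0} is well-defined and satisfies Assumptions~\ref{assumption-LS} and~\ref{assumption-LS-iii}.''

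With all hypotheses of Theorem~\ref{th:abstract-convergence} verified for the specific instantiation of Algorithm~\ref{a:P-D} using Subroutine~\ref{LS:0}, I would then simply invoke that theorem to conclude that $(u^k,v^k)$ converges to a saddle point $(\hat u,\hat v)$ of~\eqref{MP-problem} and that $\bigl(\hat{\mathcal{G}}_{(\hat u,\hat v)}(u^k,v^k)\bigr)\to 0$. This is the entire argument; the corollary is a packaging of the abstract theorem with the verification lemma.

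There is essentially no obstacle here, since the real work was already done in Lemma~\ref{prop:bounded}, Theorem~\ref{th:abstract-convergence}, and Lemma~\ref{lemma-asmp-hold}. The only point requiring a line of care is the logical dependence noted inside the proof of Lemma~\ref{lemma-asmp-hold}: boundedness of the iterates (needed in part~(iii) to control $\nabla H$ on a bounded set) itself relies on Lemma~\ref{prop:bounded}, which in turn presupposes Assumption~\ref{assumption-LS}; but Assumption~\ref{assumption-LS} is established first (parts~(i)--(ii)) without invoking boundedness, so the argument is not circular. Hence the proof of the corollary is one sentence: the hypotheses of Theorem~\ref{th:abstract-convergence} hold by Lemma~\ref{lemma-asmp-hold} and the standing assumptions, so the claimed convergence follows.
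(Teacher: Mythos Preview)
Your proposal is correct and follows essentially the same approach as the paper: invoke Lemma~\ref{lemma-asmp-hold} to verify Assumptions~\ref{assumption-LS} and~\ref{assumption-LS-iii}, then apply Theorem~\ref{th:abstract-convergence}. Your additional remark about the non-circularity of the boundedness argument is a nice clarification that the paper leaves implicit.
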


\begin{proof}
    In view of  Lemma~\ref{lemma-asmp-hold}, Assumption~\ref{assumption-LS}  holds, and thus from Lemma~\ref{prop:bounded}(ii), the generated sequences are bounded. Furthermore, from Lemma~\ref{lemma-asmp-hold}, Assumption~\ref{assumption-LS-iii} holds as well.  Therefore, the  result follows from Theorem~\ref{th:abstract-convergence}.
\end{proof}

\begin{remark} \label{alternative-LS}

In the following, we comment on the above convergence result.

\begin{enumerate} 

\item {Algorithm~\ref{a:P-D} implemented with Subroutine~\ref{LS:0} corresponds to \cite[Algorithm 4]{malitsky2018first} when the norm of $K$ is known, as in  Section~\ref{s:distributed}. This case was not originally covered in \cite{malitsky2018first}, and we obtain similar convergence results (cf. \cite[Theorems~3.4 \&~3.5]{malitsky2018first}) under a weaker assumption on the gradient of $H$. }

\item
If the operator norm of $K$ is not available for use in \eqref{init-tau0}, then it is instead possible to replace \eqref{init-tau0} with $\tau_{k(0)}=\tau_{k-1}\alpha_k$ and modify the linesearch conditions in \eqref{not-LS-cond} with\begin{multline} \label{not-LS-cond-2}
             \frac{\tau_{k(j)}^2}{2} \|K^* v^{k(j)+1} - K^* v^k\|^2 + \tau_{k(j)}\left( H(v^{k(j)+1}) - H(v^k) - \langle \nabla H(v^k), v^{k(j)+1} - v^k \rangle \right)\\ > \dfrac{ \deltaB+\deltaL}{2\beta} \|v^{k(j)+1} - v^k\|^2.
\end{multline}This corresponds to the linesearch proposed and analysed in \cite[Algorithm 4]{malitsky2018first} for globally Lipschitz $\nabla H$. In the locally Lipschitz case, it is also possible to prove analogous results to Lemma~\ref{lemma-asmp-hold}(i) and (ii), and~\ref{prop:bounded}. Regarding Lemma~\ref{lemma-asmp-hold}(iii), since we modify the initialisation in \eqref{not-LS-cond}, we can only guarantee that $(\tau_k)$ is separated from zero, which is enough to argue  convergence to saddle points as in \cite[Section 4]{malitsky2018first}.

\item  In the presence of additional structure in $H$, other variants of Subroutine~\ref{LS:0} can be used. This will be discussed further in Section~\ref{s:distributed}.

\end{enumerate}
\end{remark}

In this section, we defined a concrete linesearch procedure for Algorithm~\ref{a:P-D}  that leads to convergence to saddle points of the min-max convex-concave problem \eqref{MP-problem}, by assuming that the norm of the operator $\|K\|$ is available. In the next section, we apply these ideas to obtain implementable distributed algorithms for multi-agent minimisation problems.


\section{A distributed proximal-gradient algorithm with linesearch}
\label{s:distributed}
In this section, we propose a distributed first-order method with linesearch based upon Algorithm~\ref{a:P-D} to solve the multi-agent minimisation problem \eqref{primal-problem}. In doing so, the main question that arises is how to implement the linesearch in distributed settings. We propose two implementations  that address this with differing amounts of communication.

\subsection{Construction of the algorithm}
Let $W\in\mathbb{R}^{n\times n}$ be a mixing matrix, and set $U=\big(\frac{I-W}{2}\bigr)^{1/2}$. As explained in Section~\ref{s:prelim}, problem~\eqref{primal-problem} can be formulated as 
\begin{equation} \label{primal-dual-linear-coupling-problem} 
\displaystyle\min_{\bx \in \R^{n \times d}}\max_{\by \in \R^{n \times d}}  \langle U\by,\bx \rangle + f(\bx) + h(\bx).
\end{equation} 
Hence, problem~\eqref{primal-dual-linear-coupling-problem} is in the form of problem~\eqref{MP-problem} of Section~\ref{s:splitting-LS} with
\begin{equation} \label{eq:identifications}
    u = \by, \:  v = \bx, \: K = -U,  \: G = \iota_{0}^{*}  \equiv 0, \: F = f \text{ and } H = h.
\end{equation}
Applying Algorithm~\ref{a:P-D} to problem \eqref{primal-dual-linear-coupling-problem} gives 
\begin{equation} \label{PGEXTRA-like}
    \left\{\begin{array}{rcl}
        \by^k & = &  \by^{k-1} + \tau_{k-1} U\bx^k \\
        \overline{\by}^k & = &  \by^k + \theta_{k} (\by^k - \by^{k-1}) \\
        \bx^{k+1} & = &  \prox_{\beta\tau_k f}\Big( \bx^k - \beta\tau_k \big(U \overline{\by}^k + \nabla h(\bx^k) \big) \Big),
    \end{array}\right.
\end{equation} 
where $\theta_k = \tau_k \tau_{k-1}^{-1}$ and $\tau_k$ is calculated using a linesearch. However, in this form, \eqref{PGEXTRA-like} is not suitable for a distributed implementation since $U$ need not be a mixing matrix (in particular, it need not satisfy Definition~\ref{def:mixing-matrix}(i)). To address this, denote
\begin{equation} \label{change-of-variables}
     \bu^k= U\by^k \text{ and } \overline{\bu}^k= U\overline{\by}^k.
 \end{equation}
 Premultiplying the first two equations in \eqref{PGEXTRA-like} by $U$ and substituting \eqref{change-of-variables} yield
 \begin{equation} \label{PGEXTRA-like-2}
    \left\{\begin{aligned}
         \bu^k& = \bu^{k-1}+ \frac{\tau_{k-1}}{2}(I - W)\bx^k  \\
         \overline{\bu}^k &= \bu^k + \theta_k(\bu^k - \bu^{k-1})\\
         \bx^{k+1} & = \prox_{\beta\tau_k  f}\big(\bx^k- \beta\tau_k[\overline{\bu}^k + \nabla h(\bx^k)]\big),
    \end{aligned}\right.
\end{equation}
Since this only involves the mixing matrix $W$, \eqref{PGEXTRA-like-2} is suitable for distributed implementation. Due to \eqref{change-of-variables}, we also note that the initialisation for \eqref{PGEXTRA-like-2} requires that $\bu^0\in\range U$ (\emph{e.g.,} $\bu^0=0$).

As a result of the discussion above, we present, in Algorithm~\ref{a:PGE}, a distributed proximal-gradient method for finding solutions to problem \eqref{primal-dual-linear-coupling-problem}, given an abstract distributed linesearch procedure \texttt{LS} that computes stepsizes $\tau_k$ satisfying
\begin{equation*} 
\tau_{k}\left( h(\bx^{k+1}) - h(\bx^k) - \langle \nabla h(\bx^k), \bx^{k+1} - \bx^k \rangle \right) \leq \dfrac{\deltaL}{2\beta} \|\bx^{k+1} - \bx^k\|^2.
\end{equation*}
This method corresponds to Algorithm~\ref{a:P-D} applied to \eqref{primal-dual-linear-coupling-problem}. In Section~\ref{s:LS-distributed}, we discuss two possible distributed implementations of \texttt{LS}. 

 \renewcommand{\algorithmcfname}{Algorithm}

\begin{algorithm}[!htb]
\caption{Distributed proximal-gradient algorithm for problem~\eqref{primal-problem}.\label{a:PGE}}
Choose a mixing matrix $W\in\mathbbm{R}^{n\times n}$\;
Choose a set of parameters \texttt{param} containing $\beta >0$ and $ \deltaL \in (0,1)$\;
Choose parameters $\tau_0 >0$, $\gamma \in (0,1),$ and $\deltaB \in (0,1)$ such that $\deltaB + \deltaL < 1$\;
Set $\theta_0=1$\;
Choose an initial point $\bx^1\in\R^{n \times d}$ and set $\bu^0 = 0$\;
\For{$k=1,2,\dots,$}{
\step{Dual update} Update variables according to
\begin{equation*} 
    \bu^k = \bu^{k-1} + \frac{1}{2}\tau_{k-1}(I-W) \bx^k.
\end{equation*}

\step{Backtracking linesearch and primal update} Initialise the linesearch by choosing  $\alpha_{k} \in[1,  \sqrt{1 + \gamma\theta_{k-1}} ]$, and define
\begin{equation} \label{init-LS}
        \tau_{k(0)} = \min\left\{\frac{\sqrt{2\deltaB}}{\sqrt{\beta(1 - \lambda_{\min}(W))}},\tau_{k-1}\alpha_{k}\right\}.
    \end{equation}  

    Compute \begin{equation*}
        \tau_k, \bx^{k+1} = \texttt{LS}(\texttt{param}, \tau_{k(0)}, \tau_{k-1},\bu^{k-1},\bu^k, \bx^k),
    \end{equation*} 
    
    where
\begin{align*}
    \theta_k &= \tau_{k}\tau_{k-1}^{-1} \\
    \overline{\bu}^{k}  &= \bu^k + \theta_k(\bu^k - \bu^{k-1}) \\
       \bx^{k+1} &= \prox_{\beta \tau_{k} f}\big( \bx^k - \beta \tau_{k} [\overline{\bu}^{k} + \nabla h(\bx^k)]\big).
    \end{align*} 
      }      
\end{algorithm}

Note that the linesearch initialisation~\eqref{init-tau0} for Algorithm~\ref{a:P-D} involves the opearator norm of $K = -U$. In view of the definition of $U$, this can be expressed in terms of the minimum eigenvalue of the mixing matrix $W$:
\begin{equation*} 
    \| K \|^2 = \| U^2 \| =  \dfrac{1}{2}\| I - W \| = \dfrac{1}{2}(1 - \lambda_{\min}(W)).
\end{equation*}
This identity is used to obtain the initialisation rule for $\tau_{k(0)}$ in \eqref{init-LS} from \eqref{init-tau0}.

\begin{remark}
Some further comments regarding Algorithm~\ref{a:PGE} are in order. Although the initialisation $\bu^0=0$ is used in Algorithm~\ref{a:PGE} for convenience,  it is actually possible to use $\bu^0=(u_1^0,\dots,u_n^0)^\top$ such that $\sum_{i=1}^nu_i^0=0$. To see this, note that the derivation~\eqref{change-of-variables} only requires $ \bu^0\in \range U$ and, due to Definition~\ref{def:mixing-matrix}(iii), we have
$$ \range U = \range U^2 = \range (I-W) = \ker(I-W)^\perp =\{\bu=(u_1,\dots,u_n)^\top:\sum_{i=1}^nu_i=0\}. $$
The linesearch procedure \texttt{LS} in Algorithm~\ref{a:PGE} is assumed to perform agent-independent proximal steps to define
\begin{equation*}
        \bx^{k+1} = \prox_{\beta \tau_k f}(\bx^k - \beta \tau_k [\bar{\bu}^k + \nabla h(\bx^k)]).
\end{equation*}
Two possible ways to implement \texttt{LS} are described in Subroutines~\ref{LS:1} and~\ref{LS:2} assuming $\nabla h_i$ are locally Lipschitz continuous for each $i \in \{1, \dots, n\}$ and that (an estimate of) $\lambda_{\min}(W)$ is available.

At the cost of extra communication, a variation of the proposed linesearch based on \cite[Section 5]{malitsky2018first} is discussed in Remark~\ref{remark:comm-in-LS}, which does not require knowledge of $\lambda_{\min}(W)$ in the initialisation~\eqref{init-tau0}.
\end{remark}

\begin{remark}[Equivalent forms of PG-EXTRA]\label{remark:PG-E}
In the following, we explain the relation between PG-EXTRA \eqref{PG-EXTRA} {from \cite{shi2015proximal}} and our proposal  \eqref{PGEXTRA-like-2} (Algorithm~\ref{a:PGE}).

\begin{enumerate}
\item 
Although not immediately obvious, when the stepsize is fixed, \eqref{PGEXTRA-like-2} is equivalent to PG-EXTRA~\eqref{PG-EXTRA} as we explain below. When the stepsize is not fixed, it is convenient to work with \eqref{PGEXTRA-like-2} due to the role of the sequence $(\theta_k)$. 

To see the correspondence between \eqref{PGEXTRA-like-2} with fixed stepsize and \eqref{PG-EXTRA}, first let $\tau_k = \tau$ for some constant $\tau >0$ and all $k \geq 1$. Then $\theta_k = 1$. Next, define $\beta = \tau^{-2}$, and denote the argument of the proximal step in \eqref{PGEXTRA-like-2} by 
$$\bw^k = \bx^k- \beta\tau[\overline{\bu}^k + \nabla h(\bx^k)] \quad\forall k \geq 1.$$
Noting that $\beta\tau=\tau^{-1}$, for all $k\geq 2$, we have
\begin{equation*}\begin{aligned}    
\bw^{k} - \bw^{k-1}     
&=   \bx^{k}-\bx^{k-1} - \beta\tau \bigl(\bar{\bu}^k-\bar{\bu}^{k-1}\bigr) - \beta\tau\bigl(\nabla h(\bx^k)-\nabla h(\bx^{k-1})\bigr) \\     
&=  \bx^{k}-\bx^{k-1} - \tau^{-1}\bigl(2(\bu^k-\bu^{k-1}) - (\bu^{k-1} - \bu^{k-2})\bigr) - \beta\tau\bigl(\nabla h(\bx^k)-\nabla h(\bx^{k-1})\bigr) \\ 
&=  \bx^{k}-\bx^{k-1} - \tau^{-1} \left(\tau(I-W)\bx^k-\frac{\tau}{2}(I-W)\bx^{k-1}\right) - \beta\tau\bigl(\nabla h(\bx^k)-\nabla h(\bx^{k-1})\bigr) \\     
&=  W\bx^k - \dfrac{1}{2}(I+W)\bx^{k-1} - \beta\tau\bigl(\nabla h(\bx^k)-\nabla h(\bx^{k-1})\bigr).
\end{aligned}
\end{equation*}
Altogether, we have
\begin{equation*} 
    \left\{\begin{aligned}
         \bw^k& = \bw^{k-1} + W\bx^k   - \dfrac{1}{2}(W+I)\bx^{k-1} - \sigma\left( \nabla h(\bx^k)-\nabla h(\bx^{k-1})\right)  \\
         \bx^{k+1} & = \prox_{\sigma  f}\big(\bw^k\big),
    \end{aligned}\right.
\end{equation*}
which corresponds to \eqref{PGEXTRA-like-2} with stepsize $\sigma = \beta\tau=\tau^{-1}$.  

\item 
The initialisation of Algorithm~\ref{a:PGE} can recover the initialisation of PG-EXTRA used in \cite{shi2015proximal}. Indeed, taking $\bu^0=0$ and any $\bx^1 \in \R^{n \times d}$ gives $\bu^1=\frac{\tau}{2}(I-W)\bx^1$ which implies
$ \overline{\bu}^1 = \bu^1+(\bu^1-\bu^0) = \tau(I-W)\bx^0$. 
Since $\beta\tau^2=1$, we therefore have
 $$ \bx^2 = \prox_{\beta\tau  f}\big(\bx^1- \beta\tau[\overline{\bu}^1 + \nabla h(\bx^1)]\big) = \prox_{\beta\tau  f}\big(W\bx^1- \beta\tau\nabla h(\bx^1)\big). $$ Alternatively, taking any $\bx^1 \in \R^{n \times d}$ and setting $\bu^0 = - \tau(I-W)\bx^1$ gives $\bu^1 = - \frac{\tau}{2}(I-W)\bx^1$ which implies $\overline{\bu}^1 = 0$. We therefore obtain 
 $$\bx^2 = \prox_{\beta\tau  f}\big(\bx^1- \beta\tau\nabla h(\bx^1)\big),$$
 which corresponds to the initialisation of PG-EXTRA discussed in \cite[Remark~4.5]{malitsky2023first}. 
\end{enumerate}
\end{remark}

\subsection{Linesearch procedures in distributed settings} \label{s:LS-distributed}
In this section, we propose two distributed linesearch techniques that can take the role of \texttt{LS} in Algorithm~\ref{a:PGE}. Although the updates for the variables $\bx^k,\bu^k$ and $\bar{\bu}^k$ in Algorithm~\ref{a:PGE} can be decentralised (when the stepsize is known), the proposed linesearch procedure internally requires global communication of scalar values across all agents. More precisely, \texttt{LS} as defined in Subroutine~\ref{LS:1} requires a method for computing a global (scalar) sum, and \texttt{LS} as defined in Subroutine~\ref{LS:2} requires computing a minimum (scalar) value among agents. Both of these operators can be efficiently implemented, for instance, using \emph{Message Passing Interface (MPI)} \cite{mpi41}.  See \cite[Chapter 10]{boyd2011distributed} for more details. Since the aforementioned approaches only require communication of scalars across the network, their global communication costs are small compared to methods that need global communication of the vectors and matrix variables such as $\bx^k,\bu^k$ and $\bar{\bu}^k$. 

\paragraph{Linesearch with (scalar) summation {over agents}.} 
The first distributed linesearch subroutine  we propose is presented in Subroutine~\ref{LS:1}, and corresponds to a direct application of Subroutine~\ref{LS:0}.  It locally computes independent proximal steps for each agent in \eqref{prox:k(j)}, using one common stepsize. In each outer loop of the linesearch, a round of global communication is then used to compute the sum of the scalars $a_1,\dots,a_n$, and the linesearch condition \eqref{not-LS-i} is checked. Note that this check (for positivity of the sum) can either be performed by all agents after the global value has been returned, or performed by one agent followed by broadcasting the outcome to the other agents. 
Observe also that each computation of the summation~\eqref{not-LS-i} involves performing $n$ proximal steps (one per agent). 
Note that, when \eqref{not-LS-i} holds, the current stepsize is too large and must be shrunk by the same amount for all agents before returning to Step~1. When \eqref{not-LS-i} does not hold, the linesearch procedure terminates with all agents agreeing on a common stepsize. 

  \renewcommand{\algorithmcfname}{Subroutine}
  
\begin{algorithm}[!htb]
\caption{Linesearch \texttt{LS} with sum {over agents} for Algorithm~\ref{a:PGE}.\label{LS:1}}

\textbf{Input}: \texttt{param} $=\{\beta >0, \delta_L \in (0,1), \mu \in (0,1)\}$, $\tau_{k(0)}$, $\tau_{k-1}$, $\bu^{k-1}$, $\bu^k$, $\bx^k$.

\For{$j= 0,1, \dots$}{
\step{Primal-dual agent-wise operations}

\For{$i= 1, \dots, n$}{
 Define $\overline{u}^{k(j)}_i = u^k_i + \tau_{k(j)}\tau_{k-1}^{-1}(u^k_i - u^{k-1}_i)$. Compute \begin{equation} \label{prox:k(j)}
        x^{k(j)+1}_i = \prox_{\beta \tau_{k(j)} f_i}\big( x^k_i - \beta \tau_{k(j)}[\overline{u}^{k(j)}_i + \nabla h_i(x^k_i)]\big),
    \end{equation} and \begin{equation*}
        a_i =  \tau_{k(j)}[h_i(x_i^{k(j)+1}) - h_i(x_i^k) - \langle \nabla h_i(x_i^k), x_i^{k(j)+1} - x_i^k \rangle ] - \dfrac{\deltaL}{2\beta}\displaystyle\| x_i^{k(j)+1} - x_i^k \|^2.
    \end{equation*}
}  
\step{Check the linesearch condition} Compute the sum $\sum_{i=1}^n a_i$, and  if \begin{equation}\label{not-LS-i}
        \begin{array}{c} \displaystyle\sum_{i=1}^n a_i
             >  0,
        \end{array}
    \end{equation}
    set $\tau_{k(j+1)} =\mu\tau_{k(j)}$ and go back to Step 1. Otherwise, set $\tau_k = \tau_{k(j)}$,  $\bx^{k+1} =  \bx^{k(j)+1}$, and exit the linesearch. 
}
\textbf{Output}: $\tau_k, \bx^{k+1}$. 

\end{algorithm}

We also propose a linesearch that instead of requiring the calculation of a sum of local values over a network, it needs the computation of the minimum (scalar) value among agents in a network.

\paragraph{Linesearch with a global minimum.} 

The second distributed linesearch subroutine we propose is presented in Subroutine~\ref{LS:2}. Different from Subroutine~\ref{LS:1}, this is not a direct application of Subroutine~\ref{LS:0} as it locally computes independent proximal steps for each agent in \eqref{prox:k(j)-i} using potentially different stepsizes between agents. In the outer loop (Steps~1~\&~2), the linesearch condition \eqref{LS-i-b} is tested locally for each agent, rather than globally as in Subroutine~\ref{LS:1}, and hence no global communication is need in this part. In exchange, a common stepsize for all agents is computed  by finding a global minimum across the network. This common stepsize is then used to recompute the proximal steps for each agent (if needed). Note that  the global minimum only needs to be computed once in each time \texttt{LS} is run.

\begin{algorithm}[!htb]
\caption{Linesearch \texttt{LS} with a  global minimum for Algorithm~\ref{a:PGE}.\label{LS:2}}

\textbf{Input}: \texttt{param} $=\{\beta >0, \delta_L \in (0,1), \mu \in (0,1)\}$, $\tau_{k(0)}$, $\tau_{k-1}$, $\bu^{k-1}$, $\bu^k$, $\bx^k$.

\step{Primal-dual and linesearch agent-wise operations}

\For{$i= 1, \dots,n$}{

Set $\tau_{k(0),i} = \tau_{k(0)}$.

\For{$j= 0,1, \dots$}{

Step 1.1. Define $\check{u}^{k(j)}_i = u^k_i + \tau_{k(j),i}\tau_{k-1}^{-1}(u^k_i - u^{k-1}_i)$. Compute
\begin{equation} \label{prox:k(j)-i}
        \check{x}^{k(j)+1}_i = \prox_{\beta \tau_{k(j),i} f_i}\big( x^k_i - \beta \tau_{k(j),i}[\overline{u}^{k(j)}_i + \nabla h_i(x^k_i)]\big).
    \end{equation} and \begin{equation*}
        b_i = \tau_{k(j),i}\left[h_i(\check{x}_i^{k(j)+1}) - h_i(x_i^k) - \langle \nabla h_i(x_i^k), \check{x}_i^{k(j)+1} - x_i^k \rangle \right] 
             - \dfrac{\deltaL}{2\beta}\| \check{x}_i^{k(j)+1} - x_i^k \|^2.
    \end{equation*}
    
Step 1.2. Backtracking linesearch test. If \begin{equation}\label{LS-i-b}
        \begin{array}{c} b_i 
             > 0,
        \end{array}
    \end{equation} 
    set $\tau_{k(j+1),i} =\mu\tau_{k(j),i}$ and go back to Step 1.1. Otherwise, set $\tau_{k,i}= \tau_{k(j),i}$, ${\overline{u}}_i^k = \check{u}_i^{k(j)}$, $x_i^{k+1} = \check{x}_i^{k(j)+1}$, and finish the linesearch for agent $i$. 
    }
}
\step{} Compute $\tau_k = \displaystyle\min_{i=1,\dots,n} \tau_{k,i}$ and 

\For{$i = 1, \dots, n$ such that $\tau_{i,k} > \tau_k$}{
Recompute 
\begin{equation*}
    \begin{aligned}
        \overline{u}^{k}_i & =  u^k_i + \tau_{k}\tau_{k-1}^{-1}(u^k_i - u^{k-1}_i)  \\
        x^{k+1}_i &= \prox_{\beta \tau_{k} f_i}\big( x^k_i - \beta \tau_{k}[\overline{u}^{k}_i + \nabla h_i(x^k_i)]\big).
    \end{aligned}
\end{equation*}
}
\textbf{Output}: $ \tau_k, \bx^{k+1}$. 
\end{algorithm}

\begin{remark} \label{remark:comm-in-LS}
Some comments regarding Subroutines~\ref{LS:1} and~\ref{LS:2} are in order.

\begin{enumerate}
\item
The rounds of communication in both algorithms are different in nature. On the one hand, in each outer loop of Subroutine~\ref{LS:1}, we require communication to compute the global sum in \eqref{not-LS-i} and test the inequality therein. On the other hand, in Subroutine~\ref{LS:2}, we require communication to compute the minimum stepsize among agents only once per linesearch call. 

\item 
One advantage of Subroutine~\ref{LS:1} is that it does not require the computation of additional proximal steps after communicating, as all agents share the same stepsize before entering a new inner linesearch iteration. However, this cost is balanced by the global communication needed to check the linesearch condition. In this regard, there is a trade-off between checking a global sum and recomputing proximal steps and computing the global minimum stepsize. The latter is the case of Subroutine~\ref{LS:2}, in which both proximal steps and test of the linesearch condition are performed locally, and may be more suitable whenever the proximal steps are cheap relatively to the cost of computing global scalar sums.  

{\item Step~1 of Subroutine~\ref{LS:1} and Subroutine~\ref{LS:2} have the following operations in common before checking the corresponding linesearch test: $n$ function evaluations, $n$ proximal-gradient evaluations and $2n$ inner products of vectors of dimension $d$. Observe that Subroutine~\ref{LS:2} also performs extra $O(n)$ proximal-gradient evaluations in Step 2, in exchange of globally checking the linesearch test as in Subroutine~\ref{LS:1}.}

\end{enumerate}

\end{remark}

\subsection{Convergence analysis}

In this section, we analyse convergence of Algorithm~\ref{a:PGE} with \texttt{LS} being either Subroutine~\ref{LS:1} or~\ref{LS:2}, which will require the following lemma. 

\begin{lemma} \label{lemma:distrib-LS-terminates}
    Suppose that $f_1,\dots,f_n: \R^{d} \to \R \cup \{+\infty\}$ are proper lsc convex functions and $h_1,\dots,h_n: \R^{d} \to \R $ are convex differentiable functions with locally Lipschitz continuous gradients. Then, Subroutines~\ref{LS:1} and~\ref{LS:2} are well-defined and satisfy Assumptions~\ref{assumption-LS} and~\ref{assumption-LS-iii}. In particular, \begin{equation} \label{LS-i}
        \forall k \in \mathbb{N}, \quad \tau_k \sum_{i=1}^n\left[h_i(x_i^{k+1}) - h_i(x_i^k) - \langle \nabla h_i(x_i^k), x_i^{k+1} - x_i^k \rangle \right] \leq \dfrac{\deltaL}{2\beta}\sum_{i=1}^n\| x_i^{k+1} - x_i^k \|^2.
    \end{equation}
\end{lemma}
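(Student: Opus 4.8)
The plan is to recognise Subroutines~\ref{LS:1} and~\ref{LS:2} as concrete instances of the abstract linesearch of Section~\ref{s:splitting-LS} applied to the saddle-point reformulation~\eqref{primal-dual-linear-coupling-problem} under the identifications~\eqref{eq:identifications}, namely $u=\by$, $v=\bx$, $K=-U$, $G\equiv 0$, $F=f$, $H=h$. Under these, the structural hypotheses transfer verbatim: $f=\sum_i f_i$ and $G\equiv 0$ are proper lsc convex, and $h=\sum_i h_i$ is convex and differentiable with $\nabla h(\bx)=(\nabla h_1(x_1),\dots,\nabla h_n(x_n))^\top$ locally Lipschitz on $\R^{n\times d}$ because each $\nabla h_i$ is locally Lipschitz on $\R^d$. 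Moreover $\|K\|=\|U\|$ is available, since $W=W^\top\preceq I$ gives $\|U\|^2=\|U^2\|=\tfrac12\|I-W\|=\tfrac12(1-\lambda_{\min}(W))$, which is exactly the substitution turning~\eqref{init-tau0} into~\eqref{init-LS}. Finally, since the trace inner product and the Frobenius norm on $\R^{n\times d}$ split agent-wise, Assumption~\ref{assumption-LS}(ii) written with $H=h$, $v=\bx$ is literally the claimed inequality~\eqref{LS-i}; hence it suffices to show each subroutine is well-defined and satisfies Assumptions~\ref{assumption-LS} and~\ref{assumption-LS-iii}.

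For Subroutine~\ref{LS:1} I would observe that it \emph{is} Subroutine~\ref{LS:0} applied to~\eqref{primal-dual-linear-coupling-problem}: by~\eqref{change-of-variables} the agent-wise proximal steps~\eqref{prox:k(j)} assemble into $\prox_{\beta\tau_{k(j)}f}(\bx^k-\beta\tau_{k(j)}[\overline{\bu}^{k(j)}+\nabla h(\bx^k)])$, and the test $\sum_i a_i>0$ in~\eqref{not-LS-i} is exactly the negation of~\eqref{not-LS-cond} after the same agent-wise splitting of the inner product and the norm. Lemma~\ref{lemma-asmp-hold} therefore applies without change and yields that the loop terminates with $\tau_k\le\tau_{k(0)}$, that $(\tau_k)$ is bounded and bounded away from $0$, and that Assumptions~\ref{assumption-LS} and~\ref{assumption-LS-iii} hold; \eqref{LS-i} is then the agent-wise rewriting of Assumption~\ref{assumption-LS}(ii).

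Subroutine~\ref{LS:2} is the substantive case, since the agents run independent inner backtracking loops at possibly different stepsizes $\tau_{k,i}$ and are only synchronised afterwards to $\tau_k=\min_i\tau_{k,i}$. I would argue in four steps. First, per-agent termination: fixing $k$ and $i$, Lemma~\ref{lemma:prox-extension} places the trial point $\check{x}_i^{k(j)+1}$ in a fixed bounded neighbourhood of $\proj_{\overline{\dom\partial f_i}}(x_i^k)$ for all small enough $\tau_{k(j),i}$; on that set $\nabla h_i$ is Lipschitz, so by Lemma~\ref{descent-lemma} the per-agent test~\eqref{LS-i-b} can fail for at most finitely many shrinkages, exactly as in Lemma~\ref{lemma-asmp-hold}(i)--(ii) carried out agent by agent. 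Second, $\tau_k=\min_i\tau_{k,i}\le\tau_{k,i}\le\tau_{k(0)}$, giving Assumption~\ref{assumption-LS}(i). Third, the descent inequality~\eqref{LS-i}: every agent's accepted $\tau_{k,i}$ passes its own test, and for agents with $\tau_{k,i}=\tau_k$ nothing changes in Step~2, so~\eqref{LS-i-b}$\le 0$ holds at the final $x_i^{k+1}$; for agents with $\tau_{k,i}>\tau_k$ one must show that recomputing the primal step at the smaller stepsize $\tau_k$ still satisfies the per-agent inequality, after which summing over $i$ yields~\eqref{LS-i}. Fourth, separation from zero (Assumption~\ref{assumption-LS-iii}): using the boundedness of $(\bx^k)$ and $(\bu^k)$ from Lemma~\ref{prop:bounded} (through~\eqref{eq:identifications}) and Lemma~\ref{lemma:prox-extension}, all trial points across all iterations lie in one fixed bounded set on which every $\nabla h_i$ has a uniform Lipschitz constant; this yields a positive stepsize threshold below which every per-agent test passes, hence a uniform lower bound on each $\tau_{k,i}$ and on $\tau_k$, via the contradiction argument of Lemma~\ref{lemma-asmp-hold}(iii) applied to an agent attaining the minimum infinitely often (there are finitely many agents).

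The main obstacle is the third step for the recomputed agents: the backtracking certifies the per-agent inequality only at the stepsize $\tau_{k,i}$ where it stopped, whereas Step~2 uses the strictly smaller common $\tau_k$, and since the trial point $\check{x}_i^{k+1}(t)$ moves with $t$ the set of stepsizes that pass the test need not be an interval in $t$. I would try to close this by reusing the fixed bounded convex set $C$ from the fourth step, on which $\nabla h_i$ is Lipschitz with some constant $\bar L_i$: the descent lemma then forces the per-agent test to hold for \emph{every} $t\le\deltaL/(\beta\bar L_i)$, uniformly in $k$, so each agent's backtracking cannot stop below $\mu\,\deltaL/(\beta\bar L_i)$ unless it never shrinks, and the crux becomes controlling where the synchronised $\tau_k$ sits relative to these per-agent thresholds so that the recomputed steps still satisfy~\eqref{LS-i-b}; this must be arranged without a circular appeal to boundedness of the iterates, since that boundedness (Lemma~\ref{prop:bounded}) itself presupposes Assumption~\ref{assumption-LS}, i.e.~\eqref{LS-i}.
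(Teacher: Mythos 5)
Your reduction to the saddle-point reformulation via \eqref{eq:identifications}, your treatment of Subroutine~\ref{LS:1} as a verbatim instance of Subroutine~\ref{LS:0} (so that Lemma~\ref{lemma-asmp-hold} applies and \eqref{LS-i} is just the agent-wise splitting of Assumption~\ref{assumption-LS}(ii)), and the overall four-step plan for Subroutine~\ref{LS:2} all match the paper's proof. However, for Subroutine~\ref{LS:2} your argument is not complete: you yourself flag the decisive step --- that the agents with $\tau_{k,i}>\tau_k$, whose primal points are recomputed in Step~2 at the common stepsize $\tau_k$, still satisfy their per-agent descent inequality --- as an unresolved ``crux'', and the repair you sketch (one bounded convex set containing all iterates, with a uniform Lipschitz constant for each $\nabla h_i$) is one you correctly recognise cannot be used at that point, since boundedness of $(\bx^k)$ comes from Lemma~\ref{prop:bounded}, which presupposes Assumption~\ref{assumption-LS}(ii), i.e.\ exactly the inequality \eqref{LS-i} being proved. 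Because Assumption~\ref{assumption-LS}(ii) is left unestablished, neither the boundedness nor the separation-from-zero argument of your fourth step can be launched, so the proposal as written does not prove the lemma.

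The paper closes this step with a purely per-iteration device, which is the ingredient your sketch is missing. Applying Lemma~\ref{lemma-asmp-hold}(i) with $H=h_i$ (which rests only on Lemma~\ref{lemma:prox-extension} and Lemma~\ref{descent-lemma} on a bounded neighbourhood determined by the current iterate alone, hence requires no boundedness of the whole sequence) gives, for each $k$ and each agent $i$, a threshold $\xi_{k,i}>0$ such that the per-agent inequality \eqref{LS-i-t} holds for \emph{every} $t\in(0,\xi_{k,i}]$; one never needs the set of passing stepsizes to be an interval, only this one-sided guarantee. The paper then asserts that each accepted stepsize satisfies $\tau_{k,i}\in(0,\xi_{k,i}]$ --- this is precisely the ``controlling where the synchronised $\tau_k$ sits relative to the per-agent thresholds'' that you identified --- whence $\tau_k=\min_{j}\tau_{k,j}\leq\xi_{k,i}$ for every $i$, so evaluating \eqref{LS-i-t} at $t=\tau_k$ covers exactly the recomputed agents, and summing over $i$ yields \eqref{LS-i} and Assumption~\ref{assumption-LS}. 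Only after this does the paper invoke Lemma~\ref{prop:bounded} for boundedness and run the contradiction argument of Lemma~\ref{lemma-asmp-hold}(iii) on an agent whose stepsize would vanish along a subsequence, obtaining Assumption~\ref{assumption-LS-iii}; this ordering is what dissolves the circularity you were rightly worried about. In short, you located the correct crux, but the missing idea is the interval-of-validity statement \eqref{LS-i-t} evaluated at $t=\tau_k$, not a global bounded set.
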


\begin{proof}
    Subroutine~\ref{LS:1} is a distributed implementation of Subroutine~\ref{LS:0}, therefore the result for this procedure follows from Lemma~\ref{lemma-asmp-hold}, and \eqref{LS-i} follows from \eqref{LS-term}. Regarding Subroutine~\ref{LS:2}, using Lemma~\ref{lemma-asmp-hold}(i) with $H=h_i$, for each $i \in \{1, \dots, n\}$ and $k \geq 1$, there exists $\xi_{k,i} >0$ such that
    \begin{equation} \label{LS-i-t}
        \forall t \in (0,  \xi_{k,i}],\quad t \left(h_i(x_i^{k+1}(t)) - h_i(x_i^k) - \langle \nabla h_i(x_i^k), x_i^{k+1}(t) - x_i^k \rangle \right) \leq \dfrac{\deltaL}{2\beta}\| x_i^{k+1}(t) - x_i^k \|^2,
    \end{equation}
    where $x_i^{k+1}(t) = \prox_{\beta t f_i}(x_i^k - \beta t[\overline{u}_i^k + \nabla h_i(x_i^k)])$. After finitely many backtracking linesearch steps, the  search for agent $i$ terminates with $\tau_{k,i}\in (0,  \xi_{k,i}]$. Since $\tau_k=\min_{i=1,\dots,n}\tau_{k,i}$, this means that the linesearch terminates with $\tau_k \leq \tau_{k(0)}$, so Assumption~\ref{assumption-LS}(i) holds with $\tau_{\text{init}} = \tau_{k(0)}$.  By aggregating \eqref{LS-i-t} over all $i \in \{1 , \dots, n\}$ and taking $ t = \tau_k $, yields Assumption~\ref{assumption-LS}(ii) and \eqref{LS-i}. Furthermore,  in view of Lemma~\ref{prop:bounded}, $(\bx^k)$ is bounded. In order to check that $(\tau_k)$ is separated from  $0$, assume by way of a contradiction that there exists a subsequence $\tau_{k_{\ell}} \to 0$ as $\ell \to +\infty$. From Step~2 of Subroutine~\ref{LS:2}, there exists $i \in \{1, \dots, n\}$ such that $\tau_{k_{\ell},i} \to 0$.  Applying Lemma~\ref{lemma-asmp-hold}(iii) with $\tau_k = \tau_{k_{\ell},i}$ yields a contradiction, a Step~1 in Subroutine~\ref{LS:2} corresponds to Subroutine~\ref{LS:0} applied to agent $i$. Hence, Assumption~\ref{assumption-LS-iii} holds.
\end{proof}

Since we have established that Algorithm~\ref{a:P-D} is well-defined and both Subroutines~\ref{LS:1} and~\ref{LS:2} satisfy Assumptions~\ref{assumption-LS} and~\ref{assumption-LS-iii}, we are ready to state the convergence result: the primal sequence $(\bx^k)$ converges to a solution in consensus, being asymptotically feasible.

\begin{corollary}[Convergence to primal solutions] \label{c:convergence}
Suppose that, for each $i \in \{1, \dots, n\}$, $f_i: \R^d \to \R \cup \{+\infty\}$ is a proper lsc convex function and $h_i: \R^d \to \R$ is a differentiable convex with locally Lipschitz continuous gradient, and that the set of solutions of \eqref{primal-problem} is nonempty. Then, the  sequence $(\bx^k)$ generated by Algorithm~\ref{a:PGE} using the linesearch in Subroutine~\ref{LS:1} or~\ref{LS:2}, converges to a point in consensus $\hat{\bx}$, such that any row of $\hat{\bx}$ defines a solution to  \eqref{primal-problem}. 
\end{corollary}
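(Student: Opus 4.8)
The plan is to deduce this corollary from Corollary~\ref{th:convergence-one-agent} (equivalently Theorem~\ref{th:abstract-convergence}) applied through the identifications in \eqref{eq:identifications}, namely $u=\by$, $v=\bx$, $K=-U$, $G=\iota_{\{0\}}^*\equiv 0$, $F=f$, $H=h$, together with Lemma~\ref{lemma:distrib-LS-terminates} which certifies that the distributed linesearch in Subroutine~\ref{LS:1} or~\ref{LS:2} is a valid implementation of \texttt{LS} satisfying Assumptions~\ref{assumption-LS} and~\ref{assumption-LS-iii} (this is where we need $\|K\|^2=\tfrac12(1-\lambda_{\min}(W))$ so that \eqref{init-LS} matches \eqref{init-tau0}). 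First I would verify that the data satisfy the hypotheses of Theorem~\ref{th:abstract-convergence}: $f=\sum_i f_i(\cdot_i)$ is proper lsc convex, $G\equiv 0$ is trivially proper lsc convex, and $h=\sum_i h_i(\cdot_i)$ is convex, differentiable with locally Lipschitz gradient. The one genuine point to check is existence of a saddle point of \eqref{primal-dual-linear-coupling-problem}: by assumption \eqref{primal-problem} has a solution $x^\star$, so $\hat\bx=e\,(x^\star)^\top$ is a minimiser of $f+h$ over the consensus subspace $\ker U$; standard convex duality (the constraint is the polyhedral linear equation $U\bx=0$, so no constraint qualification issue) yields a multiplier $\hat\by$ with $(\hat\bx,\hat\by)$ a saddle point of $\langle U\by,\bx\rangle+f(\bx)+h(\bx)$ — or, even more directly, one takes $\hat\by$ so that $U\hat\by\in-\partial(f+h)(\hat\bx)$, which exists precisely because $\hat\bx$ solves \eqref{primal-problem-2}.

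Next I would invoke Corollary~\ref{th:convergence-one-agent} (via Lemma~\ref{lemma:distrib-LS-terminates}) to conclude that the sequence $(\bu^k,\bx^k)$ — which, after the change of variables \eqref{change-of-variables} with $\bu^0=0\in\range U$, is exactly the $(u^k,v^k)$ sequence of Algorithm~\ref{a:P-D} applied to \eqref{primal-dual-linear-coupling-problem} — converges to a saddle point $(\hat\bu,\hat\bx)$, and that $\hat{\mathcal G}_{(\hat\bu,\hat\bx)}(\bu^k,\bx^k)\to 0$. In particular $\bx^k\to\hat\bx$. It then remains to translate ``$\hat\bx$ is the $\bx$-part of a saddle point of \eqref{eq:extra saddle}'' into ``$\hat\bx$ is in consensus and each of its rows solves \eqref{primal-problem}.'' From the saddle-point inequalities \eqref{def:saddle-point-2} with $G\equiv0$, $K=-U$, $F+H=f+h$: the primal inequality $\langle -U^*\hat\bx,\,\bu-\hat\bu\rangle\ge0$ for all $\bu$ forces $U^*\hat\bx=U\hat\bx=0$ (a linear functional nonnegative on a whole subspace must vanish), i.e.\ $\hat\bx\in\ker U=\R e$, so $\hat\bx$ is in consensus; writing $\hat\bx=e\,\hat x^\top$, the dual inequality $(f+h)(\bx)-(f+h)(\hat\bx)-\langle -U\hat\bu,\bx-\hat\bx\rangle\ge0$ for all $\bx$, restricted to consensus points $\bx=e\,x^\top$ where $\langle U\hat\bu,\bx\rangle=\langle \hat\bu,U^*\bx\rangle=\langle\hat\bu,U(e x^\top)\rangle=0$, gives $\sum_i(f_i+h_i)(x)\ge\sum_i(f_i+h_i)(\hat x)$ for all $x\in\R^d$, i.e.\ $\hat x$ solves \eqref{primal-problem}. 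Hence every row of $\hat\bx$ equals $\hat x$ and solves \eqref{primal-problem}, which is the claim.

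I would also record the ``asymptotic feasibility'' remark: since $\bx^k\to\hat\bx\in\ker U$ and $\ker U=\{\bx:\;x_1=\dots=x_n\}$, the consensus residual $\|U\bx^k\|=\big\|\big(\tfrac{I-W}{2}\big)^{1/2}\bx^k\big\|\to0$, so the iterates become feasible for \eqref{primal-problem extended} in the limit; equivalently $\|x_i^k-x_j^k\|\to0$ for all $i,j$.

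The main obstacle is essentially bookkeeping rather than depth: one must be careful that the change of variables \eqref{change-of-variables} is a faithful conjugacy between the abstract iteration \eqref{PGEXTRA-like} and the implementable form \eqref{PGEXTRA-like-2}, which hinges on the initialisation lying in $\range U$ (so that $\bu^k=U\by^k$ persists for all $k$ and the update $\bu^k=\bu^{k-1}+\tfrac{\tau_{k-1}}2(I-W)\bx^k$ really is the pushforward of $\by^k=\by^{k-1}+\tau_{k-1}U\bx^k$), and on checking that $\|K\|=\|U\|$ is the quantity appearing in \eqref{init-LS}; both are already supplied by the surrounding text (the computation $\|K\|^2=\tfrac12(1-\lambda_{\min}(W))$ and the remark that $\range U=\{\bu:\sum_i u_i=0\}$). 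The only mildly nontrivial analytic point is the passage from the saddle-point inequalities to primal optimality, which uses the standard fact that a linear inequality holding on an entire subspace forces the functional to annihilate that subspace — nothing more delicate than that.
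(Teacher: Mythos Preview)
Your proposal is correct and follows the same route as the paper: invoke Lemma~\ref{lemma:distrib-LS-terminates} to certify Assumptions~\ref{assumption-LS} and~\ref{assumption-LS-iii}, then apply Theorem~\ref{th:abstract-convergence} under the identifications~\eqref{eq:identifications}. The paper's own proof is a two-line sketch that leaves implicit everything you spell out --- the saddle-point existence, the reading of consensus and primal optimality off the inequalities~\eqref{def:saddle-point-2}, and the change-of-variables bookkeeping. One small slip worth fixing: the $(u^k,v^k)$ sequence of Algorithm~\ref{a:P-D} is $(\by^k,\bx^k)$, not $(\bu^k,\bx^k)$; what converges to a saddle point of~\eqref{primal-dual-linear-coupling-problem} is $(\hat\by,\hat\bx)$, with $\hat\bu=U\hat\by$ merely the limit of $\bu^k$. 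Your dual inequality should therefore carry $-U\hat\by$ rather than $-U\hat\bu$, though this is harmless for your argument since both vanish against consensus directions.
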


\begin{proof}
In view of Lemma~\ref{lemma:distrib-LS-terminates}, the result follows from Theorem~\ref{th:abstract-convergence}:  $(\bx^k)$  converges to a solution to the primal problem associated to \eqref{primal-dual-linear-coupling-problem}, which corresponds to \eqref{primal-problem-2}.  
\end{proof}

\begin{remark} {In the following, we comment on the convergence result above and possible variations of Subroutines~\ref{LS:1} and \ref{LS:2}.}

\begin{enumerate}

\item {If the global Lipschitz constant of some gradients $ \nabla h_i$
are available, Subroutine~\ref{LS:2} can be modified as follows. Let $I_{\text{Lip}}$ be the index set for which the global Lipschitz constant $L_i$ of $\nabla h_i$ is known. We first modify \eqref{init-LS} by setting \begin{equation*}
    \tau_{k(0)}^{\prime} = \min\left\{\frac{\sqrt{2\delta_K}}{\sqrt{\beta(1 - \lambda_{\min}(W))}},\tau_{k-1}\alpha_{k}, \tau_L\right\},
\end{equation*}where $\tau_L = \min_{i \in I_{\text{Lip}}} \frac{\delta_L}{\beta L_i} $. Then, we perform Step 1 of Subroutine~\ref{LS:2} for agents $i\notin I_{\text{Lip}}$ only, since for $i\in I_{\text{Lip}}$ and any stepsize  $t \leq \frac{\delta_L}{\beta L_i}$, it follows from Lemma~\ref{descent-lemma} that \begin{equation*}
    t[h_i(x_i^{k(j)+1}) - h_i(x_i^k) - \langle \nabla h_i(x_i^k), x_i^{k(j)+1} - x_i^k \rangle ] - \dfrac{\delta_L}{2\beta}\displaystyle\| x_i^{k(j)+1} - x_i^k \|^2 \leq 0.
\end{equation*} Hence, $b_i \leq 0$ for $i \in I_{\text{Lip}}$. In this case, Lemma~\ref{lemma:distrib-LS-terminates} follows directly, and thus Corollary~\ref{c:convergence} also holds. Observe that, from construction, 
it is not straightforward to modify  Subroutine~\ref{LS:1} to leverage the additional information provided by $I_{\text{Lip}} \neq \emptyset$ in order to reduce the cost of the linesearch subroutine, as all agents use a common stepsize to perform Steps 1 and 2. When the original condition is satisfied, namely, $\sum_{i \notin I_{\text{Lip}}} a_i + \sum_{i \in I_{\text{Lip}}} a_i \leq  0$, it may allow  $\sum_{i \notin I_{\text{Lip}}} a_i >0$. Hence, testing $\sum_{i \notin I_{\text{Lip}}} a_i > 0$ in place of \eqref{not-LS-i}, results in a stricter linesearch test with no reduction of computational costs.
}

\item 
The initialisation of $\tau_{k(0)}$ in \eqref{init-LS} requires the knowledge of the smallest eigenvalue of $W$.  Instead, we could also perform the linesearch in \eqref{not-LS-cond-2} and initialise $\tau_{k(0)} = \tau_{k-1}\alpha_{k}$. For this alternative linesearch, we need a distributed way to calculate the leftmost term on the left-hand side  to make it implementable. More specifically, since $K = -U$, the extra term in the linesearch can be rewritten as 
\begin{equation} \label{LS-with-W}
    \begin{array}{rcl}
        \|K^* \bx^{k(j)+1} - K^* \bx^k\|^2 & = & \langle \bx^{k(j)+1} -  \bx^k, K^2(\bx^{k(j)+1} -  \bx^k) \rangle \\
         & =&  \dfrac{1}{2}\langle \bx^{k(j)+1} -  \bx^k,  (I-W)(\bx^{k(j)+1} -  \bx^k) \rangle\\
         & =&  \dfrac{1}{2}\left( \| \bx^{k(j)+1} -  \bx^k\|^2 - \langle W(\bx^{k(j)+1} -  \bx^k),  \bx^{k(j)+1} -  \bx^k  \rangle \right).
    \end{array}
\end{equation} Therefore, in each step of the linesearch we need to perform pre-multiplications by $W$, which amounts to rounds of communication. In this way, we check \eqref{not-LS-i} with
\begin{multline*}
    a_i = \dfrac{\tau_{k(j)}^2}{4} \left( \| x_i^{k(j)+1} -  x_i^k\|^2 - \left\langle \sum_{j \in N(i)} W_{ij}(x_j^{k(j)+1} - x_j^k),  x_i^{k(j)+1} -  x_i^k  \right\rangle \right) \\ +\tau_{k(j)}\left[h_i(x_i^{k(j)+1}) - h_i(x_i^k) - \langle \nabla h_i(x_i^k), x_i^{k(j)+1} - x_i^k \rangle \right] 
              -  \dfrac{\deltaB+\deltaL}{2\beta}\| x_i^{k(j)+1} - x_i^k \|^2,
\end{multline*} while \eqref{LS-i-b} requires \begin{multline*}
    b_i = \dfrac{\tau_{k(j),i}^2}{4} \left( \| \check{x}_i^{k(j)+1} -  x_i^k\|^2 - \left\langle \sum_{j \in N(i)} W_{ij}(\check{x}_j^{k(j)+1} - x_j^k),  \check{x}_i^{k(j)+1} -  x_i^k  \right\rangle \right) \\ + \tau_{k(j),i}\left[h_i(\check{x}_i^{k(j)+1}) - h_i(x_i^k) - \langle \nabla h_i(x_i^k), \check{x}_i^{k(j)+1} - x_i^k \rangle \right] 
             - \dfrac{\deltaB+\deltaL}{2\beta}\| \check{x}_i^{k(j)+1} - x_i^k \|^2.
\end{multline*}  Note that when both linesearch procedures terminate, they imply  \begin{multline*} 
       \dfrac{\tau_{k}^2}{4}\displaystyle\sum_{i=1}^n \left( \| x_i^{k+1} -  x_i^k\|^2 - \left\langle \sum_{j \in N(i)} W_{ij}(x_j^{k+1} - x_j^k),  x_i^{k+1} -  x_i^k  \right\rangle \right) \\ + \tau_{k} \sum_{i=1}^n\left[h_i(x_i^{k+1}) - h_i(x_i^k) - \langle \nabla h_i(x_i^k), x_i^{k+1} - x_i^k \rangle \right] \leq \dfrac{\deltaB+\deltaL}{2\beta}\sum_{i=1}^n\| x_i^{k+1} - x_i^k \|^2.
    \end{multline*} These two options could  be as costly as estimating $\lambda_{\min}(W)$ separately, since they  need a number of round of network communications equal to linesearch steps, per iteration. In this sense, we have the choice of reallocating the possibly highly costly network communication rounds per iteration to a separate problem to compute $\lambda_{\min}(W)$. The modifications of the linesearch procedures \eqref{prox:k(j)}--\eqref{not-LS-i} and  \eqref{prox:k(j)-i}--\eqref{LS-i-b} by using \eqref{LS-with-W} are also well-defined, and convergence of the resulting method follows from Remark~\ref{alternative-LS}(i).

    {\item In view of Remark~\ref{r:3.5}~(i)~\&~(ii), the primal-dual gap satisfies $\hat{\mathcal{G}}_{(\hat{\bu}, \hat{\bx})}(U^k,X^{k}) = O(\frac{1}{k})$ in Corollary~\ref{c:convergence}, and $\hat{\mathcal{G}}_{(\hat{\bu}, \hat{\bx})}(U^k,X^{k}) = O(\frac{1}{k^2})$ when $f$ is strongly convex. Meanwhile, the original PG-EXTRA method \cite{shi2015proximal} with constant stepsizes and globally Lipschitz continuous gradients shows $o(\frac{1}{k})$ rate of convergence for the optimality and feasibility residuals (cf. \cite[Theorem 1]{shi2015proximal}). }
\end{enumerate}
\end{remark}

{\section{Numerical experiments} \label{s:numerical}


In this section, we present two applications in which the smooth part of the model has locally Lipschitz gradients: the Poisson regression and the covariance estimation problems. We discuss the numerical results we obtain for the distributed version of the problems for synthetic data.

\subsection{Distributed Poisson regression}

We start by describing a model for the problem of recovering an object from a set of measurements corrupted by Poisson data. Given an object $x\in\R^d$ to be reconstructed, a matrix $A \in \R^{p \times d}$ associated with a measurement procedure, and a vector $b \in \R^p$ representing background noise, for all $j = 1, \dots, d$, the $j$-th measurement  $y_j \in \Z_{+}$ is the realisation of a Poisson random variable with expected value $(Ax+b)_j$. If the underlying Poisson random variables are independently distributed, then by the maximum likelihood principle, by finding a maximizer of the likelihood \begin{equation*}
    x \mapsto \prod_{j=1}^p \dfrac{(Ax+b)_j^{y_j}\exp\big(-(Ax+b)_j\big)}{y_j!},
\end{equation*} subject to $x$ belonging to the nonnegative orthant, we can recover the object $x$ from the measurements $y_j$, $j = 1, \dots, d$ (see, e.g., \cite[Section 3.1]{bertero2009image}). Observe that the negative log-likelihood, up to constant terms, is given by \begin{equation*}
    h(x) = \text{KL}(Ax+b,y),
\end{equation*} where KL is defined in \eqref{e:KL}.

In the context of distributed optimisation, the multiple image deconvolution problem \cite{bertero2000application} is an extension of the problem described above: the goal is to recover an object from different sets of measurements with different reconstruction matrices. More precisely, consider $n$ agents over a network, and for each $i=1,\dots,n$, consider  matrices $A^{(i)} \in \R^{p \times d}$ associated with agent $i$ measurement procedure, and background noise vectors $b^{(i)} \in \R^p$. For each $j = 1, \dots, d$, each $j$-th measurement $y^{(i)}_j \in \Z_{+}$ of agent $i$, is the realisation of a Poisson random variable with expected value $(A^{(i)} x+b^{(i)})_j$. Therefore, if the underlying random processes are independently distributed, the maximum likelihood estimation problem for finding an object $x \in \R^d_+$ is equivalent to solving  
\begin{equation} \label{eq:dist-KL-problem}
    \min_{x\in \R^d} \sum_{i=1}^n i_{\R^d_+}(x) + h_i(x).
\end{equation} where \begin{equation*}
    h_i(x) := \text{KL}(A^{(i)} x+b^{(i)},y^{(i)}).
\end{equation*} Observe that problem \eqref{eq:dist-KL-problem} follows the  structure of problem \eqref{primal-problem}. Recall that we are interested in the setting where each agent $i$  only has access to $(A^{(i)}, b^{(i)}, y^{(i)})$ and can only communicate with its direct neighbors in the network. 
Therefore, an application of Algorithm~\ref{a:PGE} to solve problem \eqref{eq:dist-KL-problem}, barring the specification of the linesearch procedure, has the following as proximal step: in iteration $k \geq 1$, for each agent $i = 1,\dots, n$, \[
x^{k+1}_i = \max\big(0,x^k_i- \beta\tau_k[\overline{u}^k_i + \nabla h_i(x^k_i)]\big),
\] 
where the maximum in the last line is understood element-wise, and $$\nabla h_i(x) = e - (A^{(i)})^\top \big( b^{(i)} \oslash (A^{(i)} x +b^{(i)}) \big).$$

\noindent \textbf{Setting 1}: in the first experiment, we take $\beta = 2.0$ to enhance proximal steps, $\delta_L = 0.5$, $\delta_K = 0.9999 - \delta_L$, $\rho = 0.95$,  $\gamma = 0.99$. The starting point is set to be the empirical approximation given by noisy measurements each agent has locally available.  We also set  $n=4$ and $d= 64 \times 64$ (size of the images). The sample image was generated using Claude AI \cite{claude2025}.

We start showing the results obtained for a ring network configuration. In Figure~\ref{fig:5.1-poisson-1}, we show the reconstructions of a simple image obtained by PG-EXTRA \cite{shi2015proximal} with constant stepsize, Algorithm~\ref{a:PGE} with constant stepsize, and Algorithm~\ref{a:PGE} implemented with Subroutines~\ref{LS:1} and~\ref{LS:2}.  Choosing an appropriate stepsize to run PG-EXTRA in this setting, in which the gradient of the smooth term of the objective function is not globally Lipschitz continuous is not a straightforward task. We show the results of the best tested stepsize. For Algorithm~\ref{a:PGE}, we choose $\tau_0 = 0.1\cdot\frac{\sqrt{2\deltaB}}{\sqrt{\beta(1 - \lambda_{\min}(W))}}$, inspired by the initialisation in \eqref{init-LS}.

Observe that, from Figure~\ref{fig:5.1-poisson-1}, PG-EXTRA shows a similar behaviour to Algorithm~\ref{a:PGE} implemented to any of the two linesearch subroutines, although the resulting reconstruction is more blurry. Algorithm~\ref{a:PGE} with constant stepsize has better defined edges for the square shape, although the quality of the reconstruction of the circle is worse compared to Algorithm~\ref{a:PGE} implemented to any of the two linesearch subroutines. In any case, the shape of the reconstructions given by Algorithm~\ref{a:PGE} resembles the (unique) original image, despite the fact that the noisy measurements get progressively worse from agent $1$ to agent $n$.

\begin{figure}
    \centering
    \includegraphics[width=\linewidth]{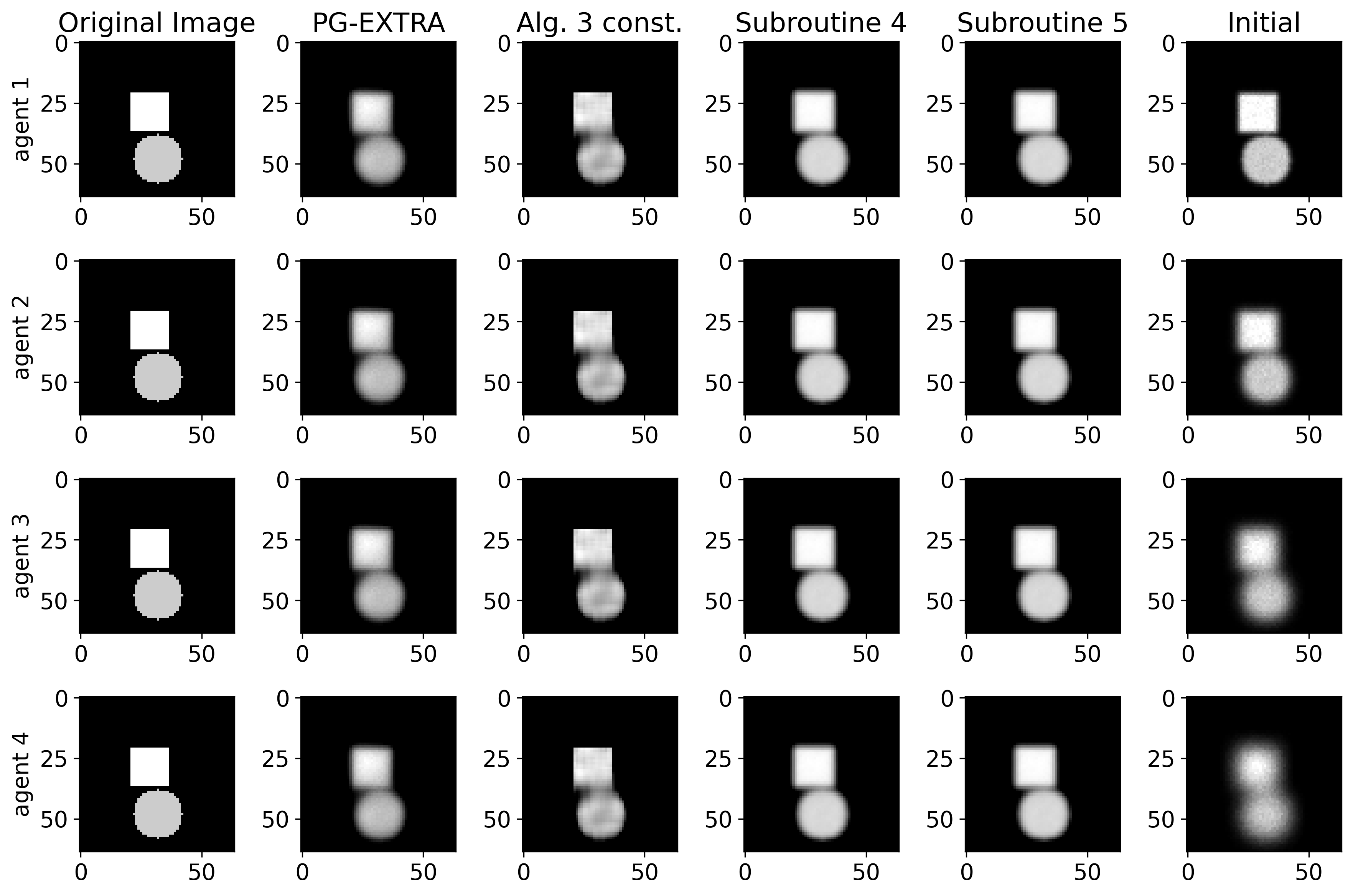}
    \caption{Reconstruction of a simple image from distributed blurry and noisy measurements. Each row corresponds to an agent, and each column to a method. From left to right: the unique original image, PG-EXTRA \cite{shi2015proximal} with constant stepsize $\tau = 0.01$, Algorithm~\ref{a:PGE} with constant stepsize $\tau_0 = 0.1\cdot\frac{\sqrt{2\deltaB}}{\sqrt{\beta(1 - \lambda_{\min}(W))}}$, Algorithm~\ref{a:PGE} implemented with Subroutine~\ref{LS:1} starting from $\tau = \tau_0$, and Algorithm~\ref{a:PGE} implemented with Subroutine~\ref{LS:2} starting from $\tau = \tau_0$.}
    \label{fig:5.1-poisson-1}
\end{figure}

\noindent \textbf{Setting 2}: in order to induce more regularity to the reconstruction, we include the strongly convex $\ell_2$ penalization term. We replace the indicator of the nonnegative orthant with $f_i(x_i) = \iota_{\R^d_+}(x_i) + \frac{\lambda}{2}\|x_i\|_2^2$, changing the $x$-update to \begin{equation*}
    x^{k+1}_i  = \max\left(0,\frac{x^k_i- \beta\tau_k[\overline{u}^k_i + \nabla h_i(x^k_i)]}{1+\lambda\tau_k}\right).
\end{equation*} We keep the same parameters as in Setting 1, changing only the image to reconstruct, also generated using Claude AI \cite{claude2025}. We set the regularisation term   $\lambda = 0.001$.

Figure~\ref{fig:5.1-poisson-2} shows that PG-EXTRA with constant stepsize produces a similar reconstruction to Algorithm~\ref{a:PGE} implemented to any of the two linesearch subroutines, but still more blurry. However, Algorithm~\ref{a:PGE} with constant stepsize reconstruct an image considerably worse than the others. By obverving the columns of Subroutine~\ref{LS:1} and Subroutine~\ref{LS:2}, we see that we stopped the iterations before consensus among agents is achieved, and therefore we proceed to present the progress of the relative error (Figure~\ref{fig:5.1-img-re-2}), the feasibility measure (Figure~\ref{fig:5.1-img-feas-2}) and the stepsize (Figure~\ref{fig:5.1-img-step-2}) along outer iterations. In other words, we show the \emph{true} values of each measure, ignoring for now the corresponding values computed within the linesearch. From Figure~\ref{fig:5.1-img-re-2}, we can see that although our method quickly achieves a small relative error, similarly to the stepsizes in Figure~\ref{fig:5.1-img-step-2}, the values of $\tau_k$ need to increase in order to improve consensus among agents in Figure~\ref{fig:5.1-img-feas-2}. Despite what Figure~\ref{fig:5.1-img-step-2b} might suggest, the stepsize does not converge to $0$, as the smallest order of magnitude it achieves, $10^{-5}$, is still larger than the smallest order of magnitude achieved for the relative error, $10^{-6}$.

\begin{figure}
    \centering
    \includegraphics[width=\linewidth]{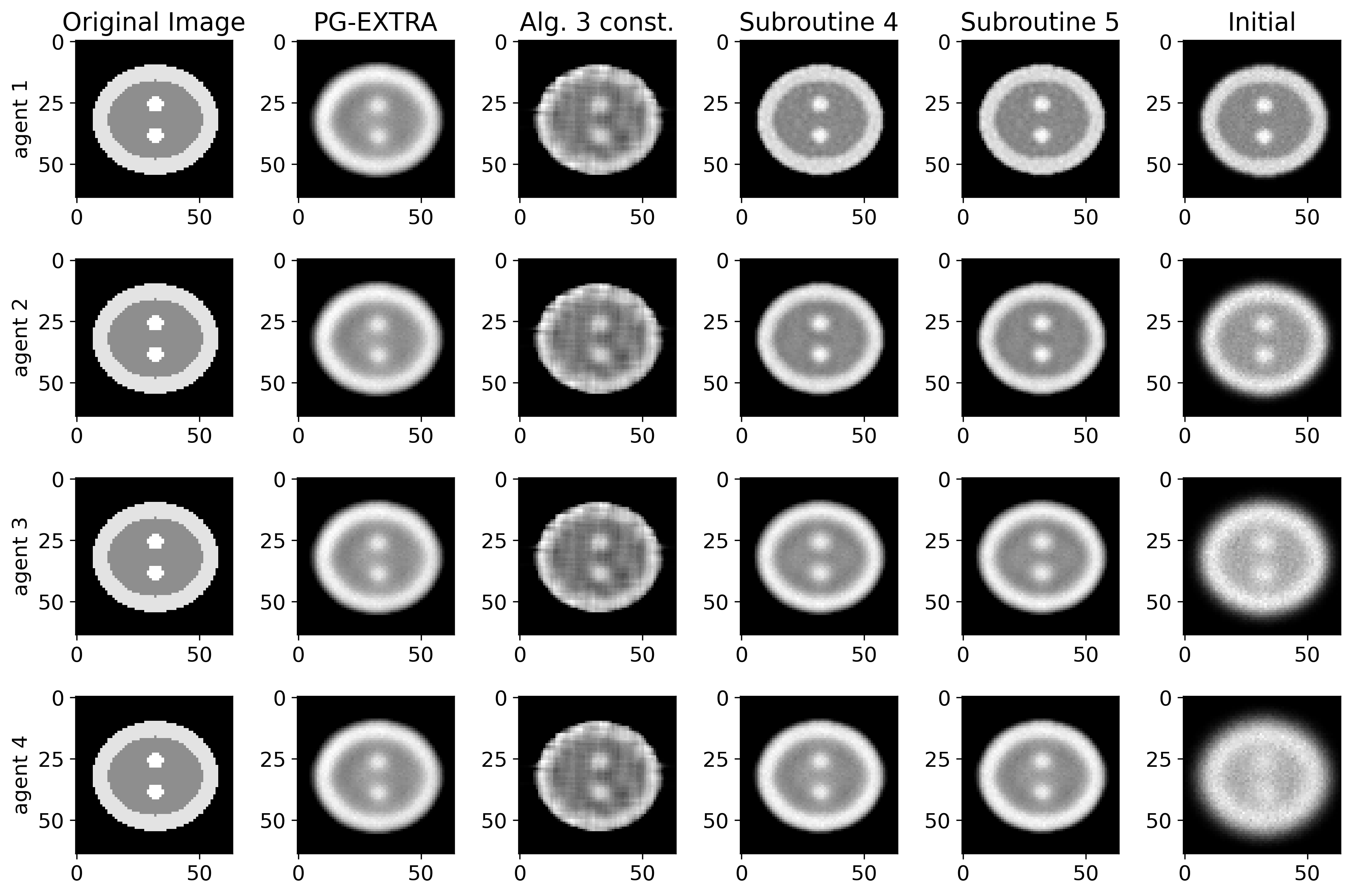}
    \caption{Reconstruction of a synthetic phantom image from distributed blurry and noisy measurements. Each row corresponds to an agent, and each column to a method. From left to right: original image, PG-EXTRA \cite{shi2015proximal} with constant stepsize $\tau = 0.01$, Algorithm~\ref{a:PGE} with constant stepsize $\tau_0 = 0.1\cdot\frac{\sqrt{2\deltaB}}{\sqrt{\beta(1 - \lambda_{\min}(W))}}$, Algorithm~\ref{a:PGE} implemented with Subroutine~\ref{LS:1} starting from $\tau = \tau_0$, and Algorithm~\ref{a:PGE} implemented with Subroutine~\ref{LS:2} starting from $\tau = \tau_0$.}
    \label{fig:5.1-poisson-2}
\end{figure}


\begin{figure}
\centering
\begin{subfigure}{0.4\textwidth}
    \includegraphics[width=\textwidth]{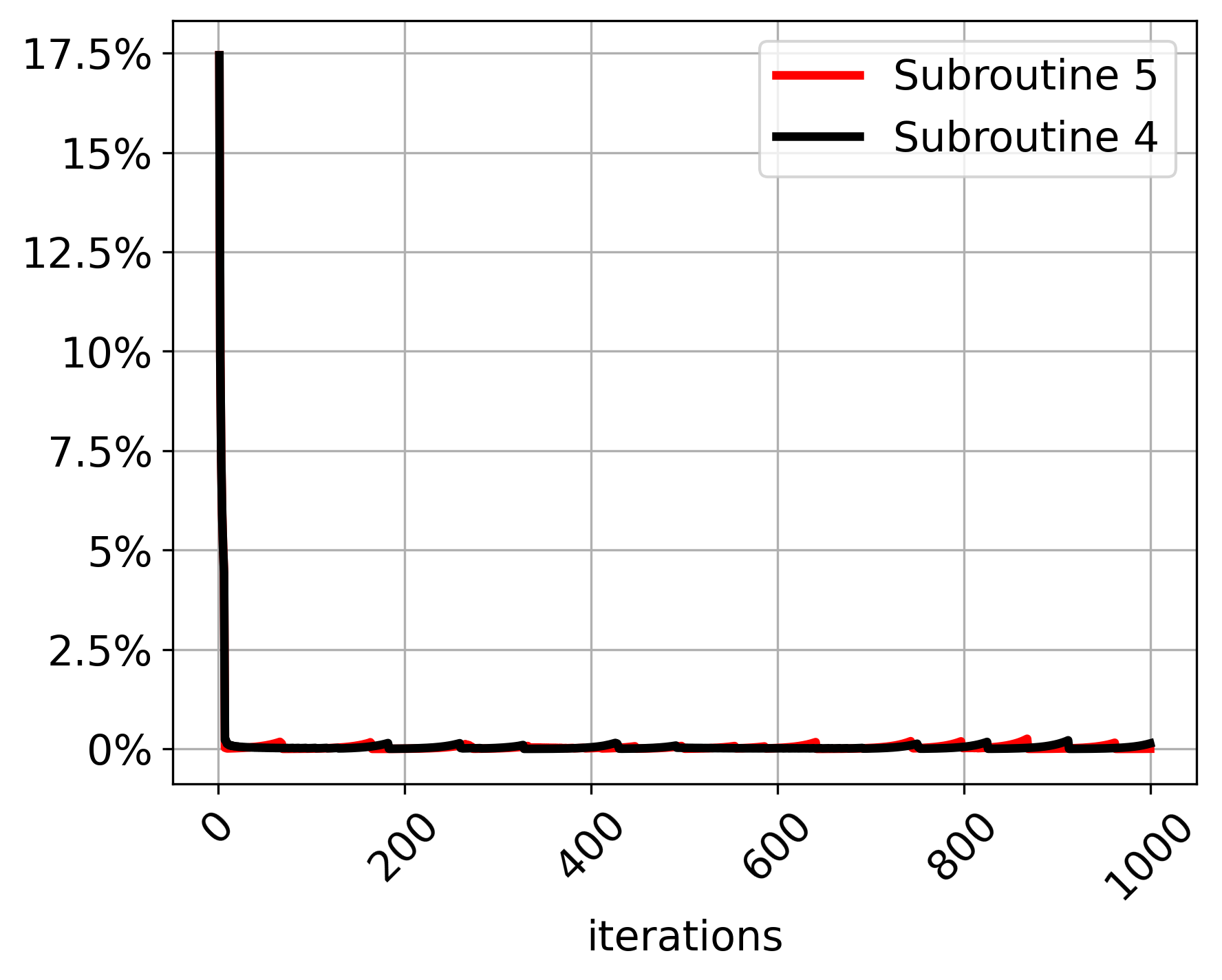}
    \caption{All outer iterations.\newline}
    \label{fig:5.1-img-re-2a}
\end{subfigure}
\hfill
\begin{subfigure}{0.4\textwidth}
    \includegraphics[width=\textwidth]{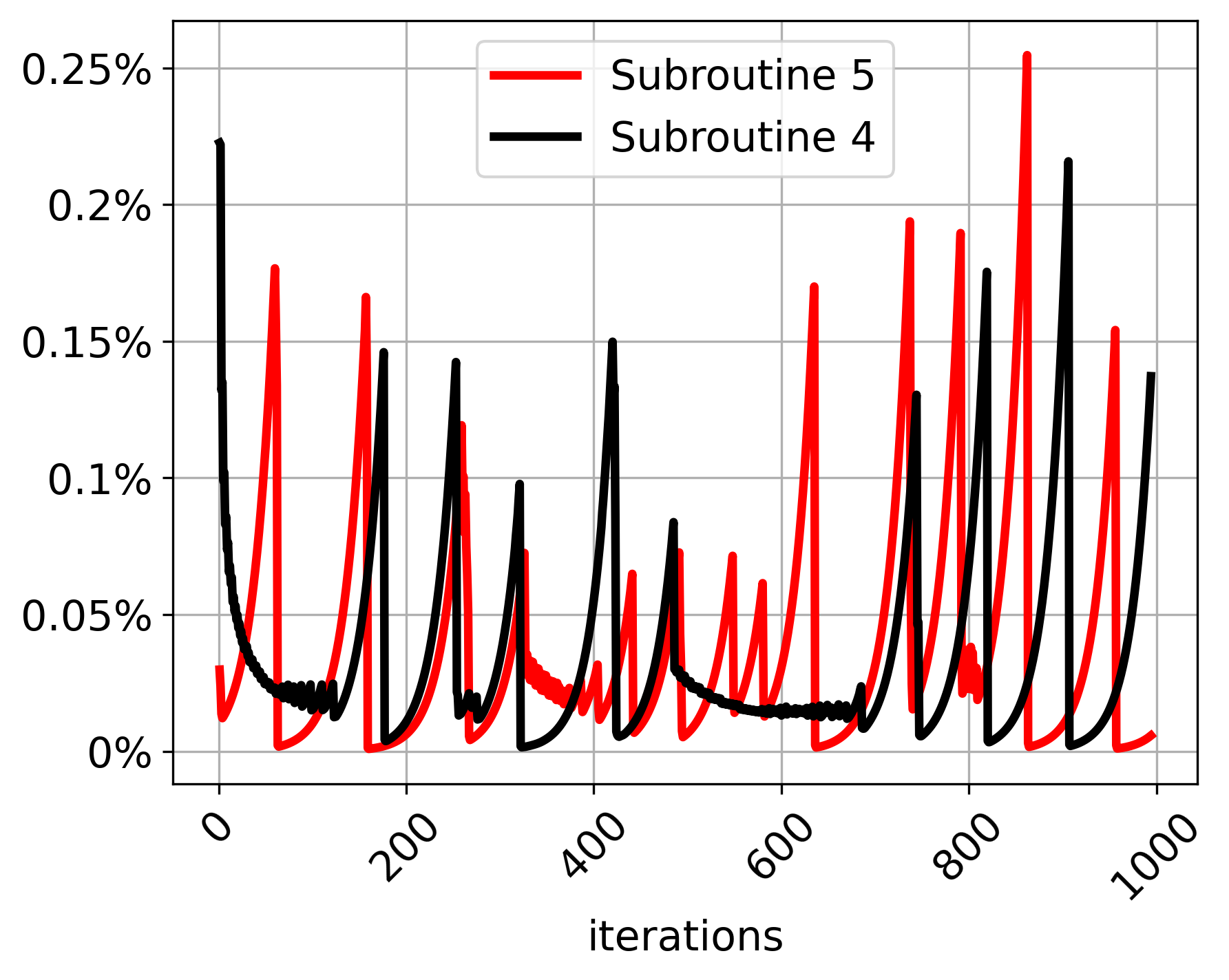}
    \caption{Outer iterations with same order of magnitude for $\Delta \bx^k$.}
    \label{fig:5.1-img-re-2b} 
\end{subfigure} 
\caption{Relative error $ \Delta \bx^k = \frac{\|\bx^{k+1}-\bx^k\|}{\|\bx^0 - \bx_{\text{true}}\|}$, where $\bx_{\text{true}}$ is the original image.}
\label{fig:5.1-img-re-2}
\end{figure}


\begin{figure}
\centering
    \includegraphics[width=0.4\textwidth]{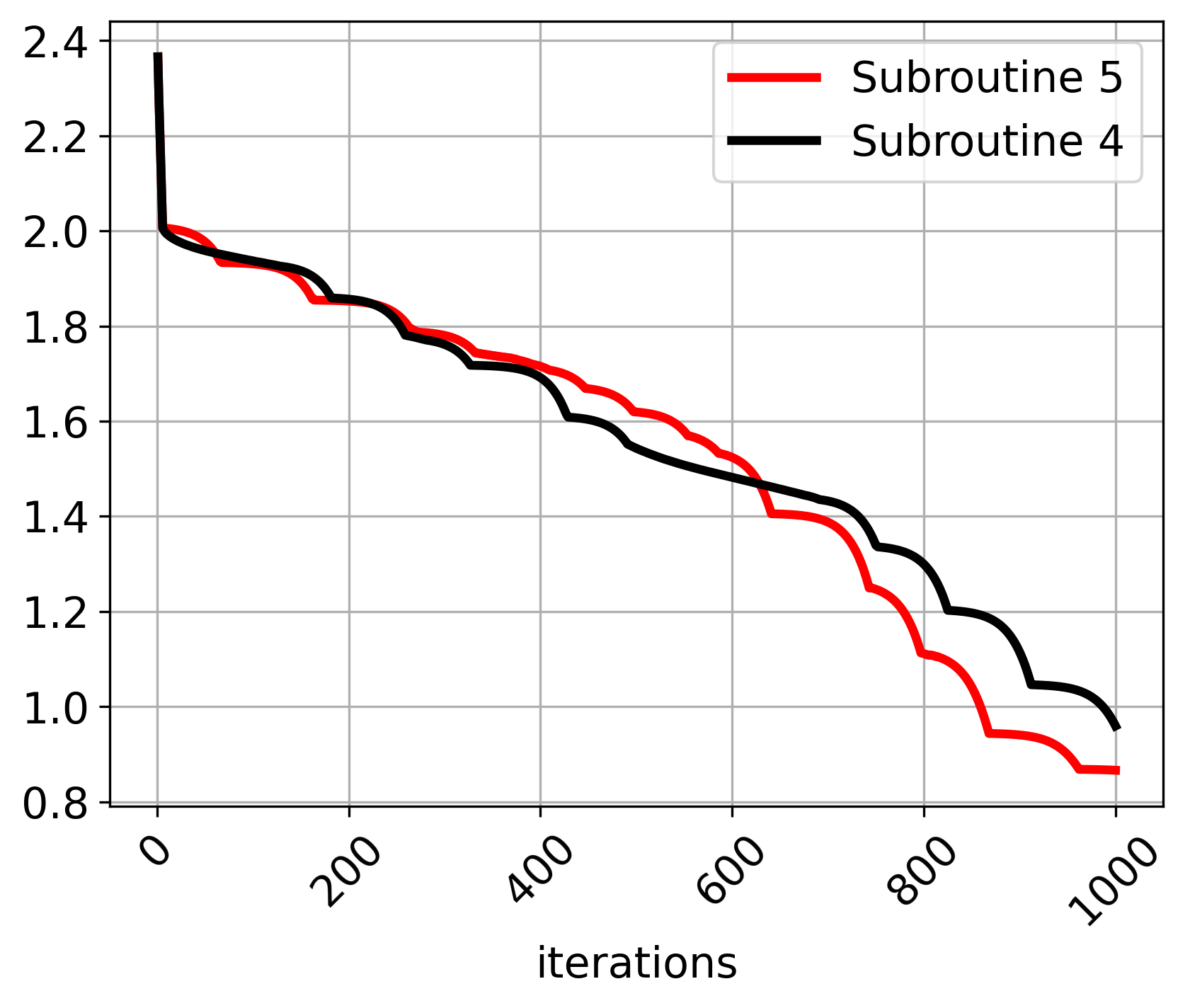}
\caption{Feasibility measure $\|(I-W)\bx^k\|$ along outer iterations.}
\label{fig:5.1-img-feas-2}
\end{figure}


\begin{figure}
\centering
\begin{subfigure}{0.4\textwidth}
    \includegraphics[width=\textwidth]{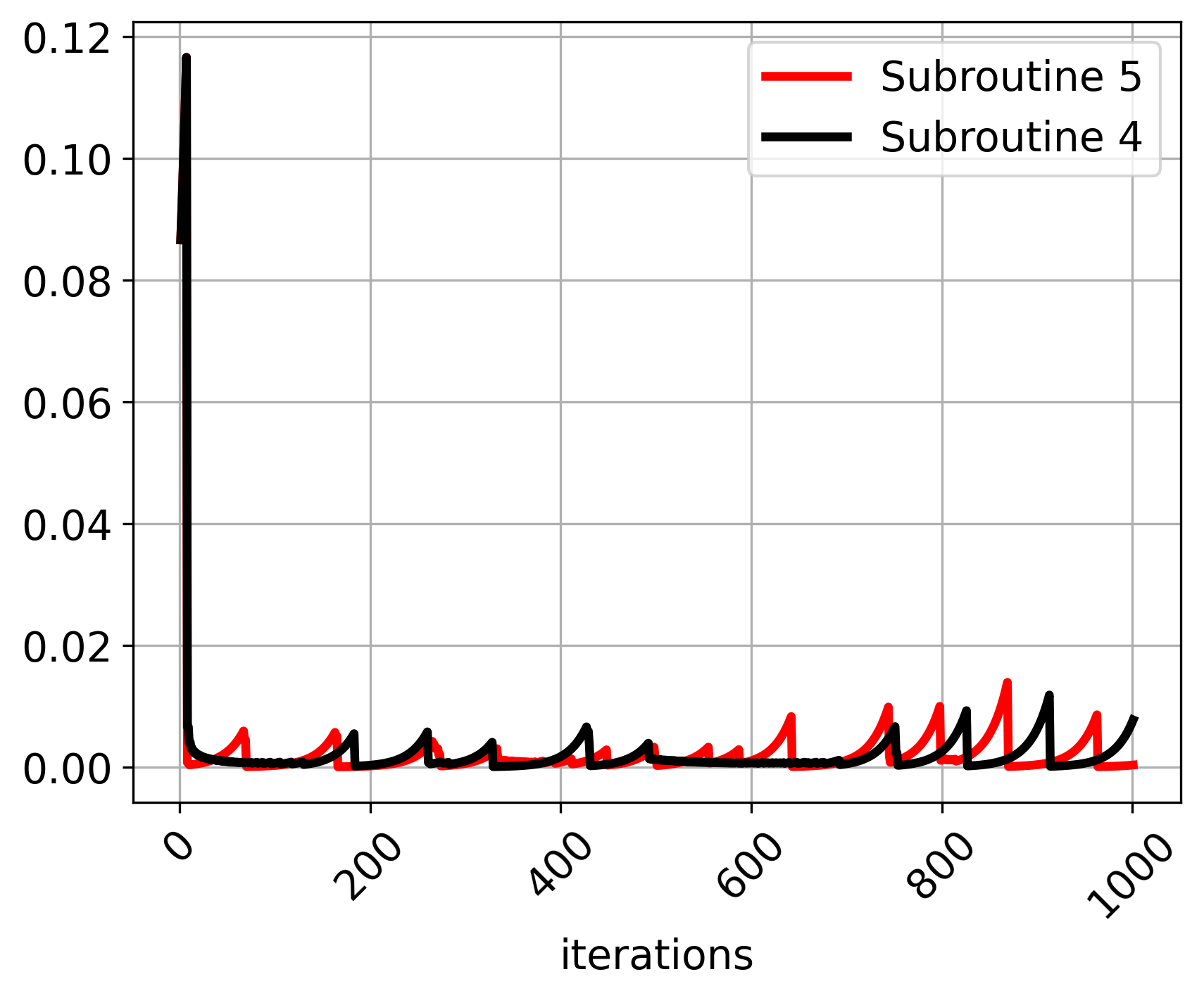}
    \caption{All outer iterations.\newline}
    \label{fig:5.1-img-step-2a}
\end{subfigure}
\hfill
\begin{subfigure}{0.4\textwidth}
    \includegraphics[width=\textwidth]{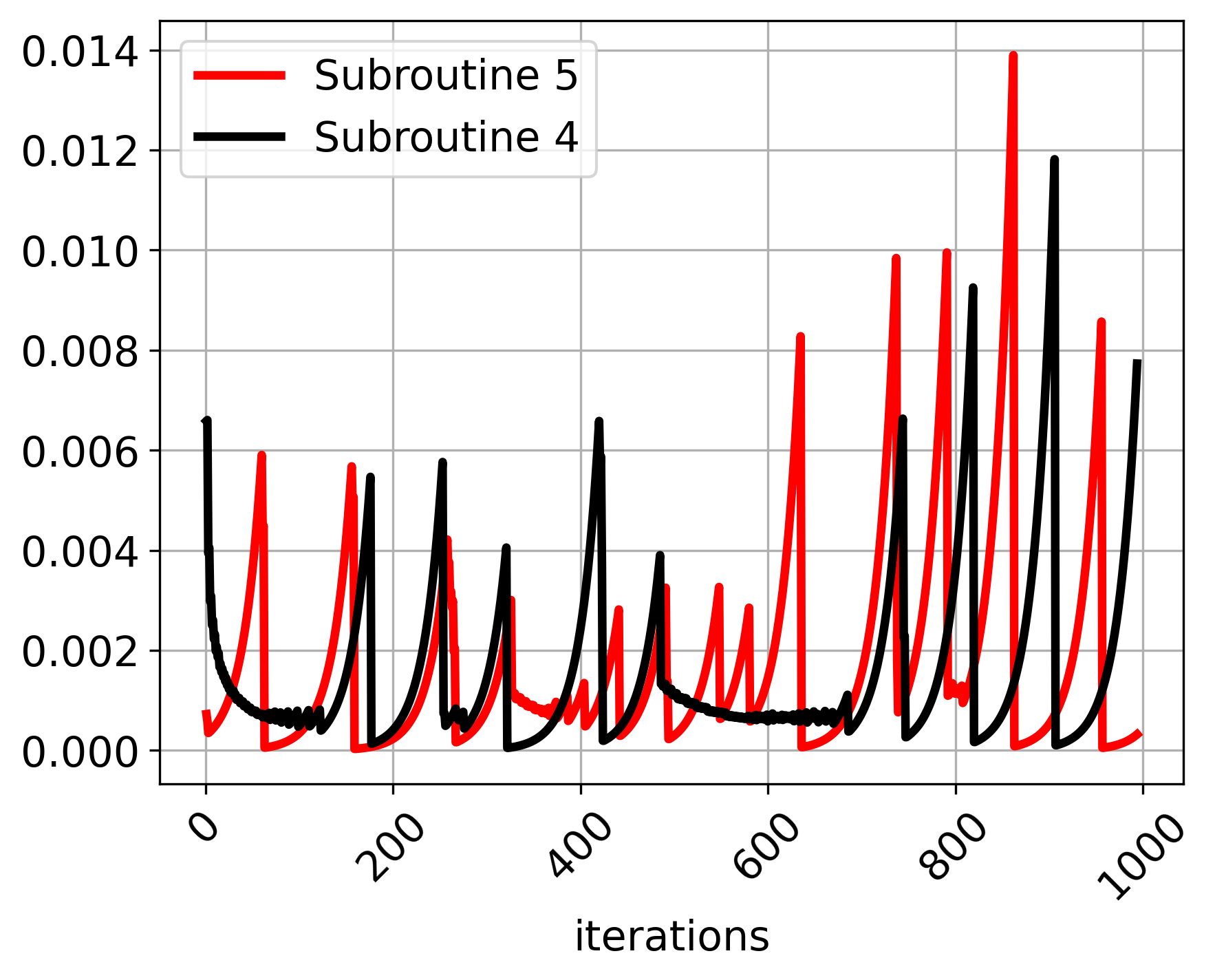}
    \caption{Outer iterations with same order of magnitude for stepsize.}
    \label{fig:5.1-img-step-2b} 
\end{subfigure} 
\caption{Stepsize sequence alongside outer iterations.}
\label{fig:5.1-img-step-2}
\end{figure}

\subsection{Distributed maximum likelihood estimate of the information matrix}

 The second application we study is the estimation of the inverse of a covariance matrix. Given a Gaussian random variable $y \in \R^d$ with mean zero, we consider the problem of estimating its covariance matrix $\Sigma = \mathbb{E}yy^{\top}$ using $M \geq 1$ samples $\{y_j \in \R^d : j = 1, \dots, M \}$ of $y$. In order to achieve this goal, we solve the following convex optimisation problem \cite[Eq. (7.5)]{boyd2004convex} for the information matrix $X$ (the inverse of the covariance matrix): \begin{equation*}
    \max_{X \in \mathbb{S}^d} \log \det(X) - \tr(XY) \text{~~s.t.~~} X \in C,
\end{equation*} where $\mathbb{S}^d$ denotes the space of  symmetric matrices in $\R^{d\times d}$, \begin{equation} \label{5.2:empirical} Y = \frac{1}{M}\sum_{j=1}^M y_j y_j^{\top}\end{equation} is the sample covariance, and $C \subseteq \mathbb{S}^d$ is a constraint set used to encode prior knowledge of the solution. Following \cite[Eq. (45)]{malitsky2024adaptive}, we consider $$C = \{X \in \mathbb{S}^d: l I \preccurlyeq X \preccurlyeq u I \}$$ for some real values $0 < l \leq u$.

In distributed settings, we are interested in the case when the agents in a network do not have access to all the samples, but rather to a subset of them. More specifically, suppose  $\{1, \dots, M\}$ is partitioned into $n$ sets $S_i$ of size $|S_i|$ for agent $i = 1, \dots, n$. Each agent $i$ only has access to the samples $\{y_j: j \in S_i\}$. In this manner, the sample covariance can be written as follows \begin{equation*}
     Y = \frac{1}{M}\sum_{i=1}^n \sum_{j \in S_i} y_j y_j^{\top} = \frac{1}{M}\sum_{i=1}^n |S_i| Y_i, \end{equation*} where $Y_i = \frac{1}{|S_i|}\sum_{j \in S_i} y_j y_j^{\top}$ for $i = 1,\dots, n$. Hence, the distributed maximum likelihood estimate of the information matrix problem can be cast in the form of problem \eqref{primal-problem} as follows: \begin{equation} \label{eq:dist-MLEI}
    \min_{X \in \mathbb{S}^d}\sum_{i=1}^n h_i(X) + \iota_C(X),
\end{equation} with\begin{equation*}
    h_i(X) = -|S_i| \big( \log\det(X) - \tr(XY_i)\big).
\end{equation*} Denote by $X_i$ a copy of the decision variable $X$ owned by agent $i$. An application of Algorithm~\ref{a:PGE} to solve problem \eqref{eq:dist-MLEI}, barring the specification of the linesearch procedure, yields the following iteration scheme: in iteration $k \geq 1$, for each agent $i = 1,\dots, n$, \begin{equation*} 
    \left\{\begin{aligned}
         V^k_i& = V^{k-1}_i+ \frac{\tau_{k-1}}{2}\sum_{t=1}^n (I-W)_{it}X_t  \\
         \overline{V}^k_i &= V^k_i + \theta_k(V^k_i - V^{k-1}_i)\\
         X^{k+1}_i & = \proj_C(X^k_i- \beta\tau_k[\overline{V}^k_i + \nabla h_i(X^k_i)]) 
    \end{aligned}\right.
\end{equation*}  where $$\nabla h_i(X) = -|S_i|\big( X^{-1} - Y_i \big).$$

\noindent \textbf{Setting}: in the following experiment, we take $\beta = 1.0$, $\delta_L = 0.5$, $\delta_K = 0.9999 - \delta_L$, $\rho = 0.95$,  $\gamma = 0.99$, and for all $i$, $x_i^1 = I_{d\times d}$. We also take $l = 0.7$ and $u = 1.8$, defining a box that contains the solution. We also set  $n=10$, $d= 5$, and $|S_i| = 1$ for all $i = 1, \dots, n$. In our simulations, data was generated following the sparse covariance estimation example from the scikit-learn documentation \cite{scikit-learn}: given a sparse positive definite matrix $P$, the target covariance matrix is given by $\Sigma = P^{-1}$. 

We start showing the results obtained for a ring network configuration. Figure~\ref{fig:5.2-info-matrix-1} shows the inverse of the information matrices given by the initial empirical data, and Algorithm~\ref{a:PGE} implemented with constant stepsize, and Subroutines~\ref{LS:1} and~\ref{LS:2}. Note that Algorithm~\ref{a:PGE} with linesearch identifies the entries in the diagonal, different from the case when a constant stepsize is chosen.

\begin{figure}
    \centering
    \includegraphics[width=0.8\linewidth]{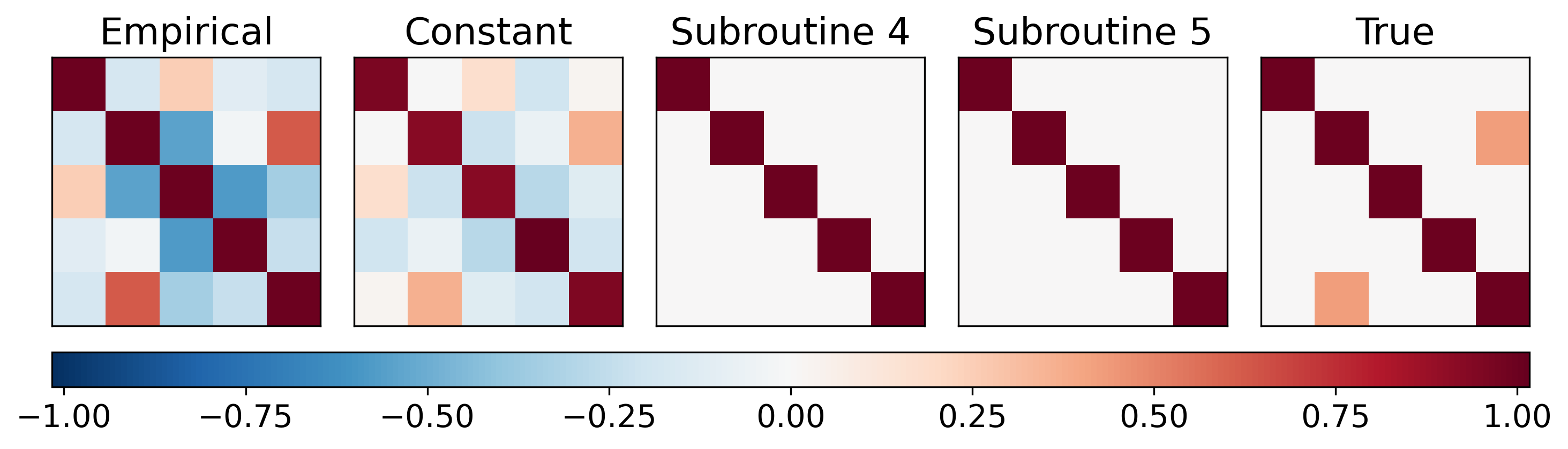}
    \caption{Covariance estimated by empirical data ($Y$ given in \eqref{5.2:empirical}), Algorithm~\ref{a:PGE} implemented with constant stepsize, and Subroutines~\ref{LS:1} and~\ref{LS:2}, compared to the true covariance. Initial stepsize: $\tau_0 = \frac{\sqrt{2\deltaB}}{\sqrt{\beta(1 - \lambda_{\min}(W))}}$.}
    \label{fig:5.2-info-matrix-1}
\end{figure}

In this example, we also include the effect of the inner iterations of the linesearch procedures on the performance measures. We do not take into account the cost of rounds of communications, as our experiments only simulate a distributed network, and therefore they do not represent a real cost. In order to simplify the comparison of the numerical results, suppose without loss of generality that the agents perform proximal-gradient evaluations synchronously. In other words, each round of proximal-gradient evaluations consists of each agent performing its own local proximal-gradient evaluation plus some possible idle time. The performance measures are  thus shown as a function of the proximal-gradient evaluation (synchronous) rounds. From construction, for Subroutine~\ref{LS:2}, some agents may need more inner linesearch steps to find an appropriate stepsize, and thus a round of proximal-gradient evaluations could consist of at least one agent performing one linesearch inner step, while the rest are idle.

Furthermore, in order to make a fair comparison, we show the results of the mean value and standard deviation across agents. More specifically, for Subroutine~\ref{LS:1}, it is possible to measure the progress $(\bx^k)_k$  in each linesearch step, since all the agents synchronously perform one proximal-gradient step, and then the  round of communications used to test the linesearch condition can be used to also evaluate the performance measures. However,  for Subroutine~\ref{LS:2}, the same cannot be done, as the round of communications used to compute the minimum stepsize among agents is only performed after each agent has finished its own linesearch inner loop. Therefore, every time a linesearch step is performed for each agent, an extra round of communication would be needed only to evaluate the performance measures, dramatically increasing the computational costs. Instead, we keep track of the local performance measures of interest for each agent separately,  for both subroutines.

In Figures~\ref{fig:5.2-rel-1} and \ref{fig:5.2-step-1}, we compare the results obtained for the relative error and the stepsizes, respectively, alongside  proximal-gradient evaluation rounds. We use the latter to measure the performance of both Subroutines~\ref{LS:1} and ~\ref{LS:2}, as each outer iteration hides, in principle, an unknown number of linesearch inner iterations. Figure~\ref{fig:5.2-rel-1} shows that both subroutines stabilise at around 17.5\% of relative error, although Subroutine~\ref{LS:1} takes fewer proximal-gradient evaluations to satisfy the stopping test: $\max\{\|\bx^{k+1} - \bx^k\|, \|(I-W)\bx^k\| \} < 10^{-3}$. Figure~\ref{fig:5.2-step-1} shows that both subroutines eventually find stepsizes in a given neighbourhood, and stay bounded away from $0$.

We also observe that both subroutines keep the iterates (numerically) feasible, that is, agents might  deviate from consensus alongside iterations only marginally, and thus we do not report it.


\begin{figure}
\centering
\begin{subfigure}{0.32\textwidth}
    \includegraphics[width=\textwidth]{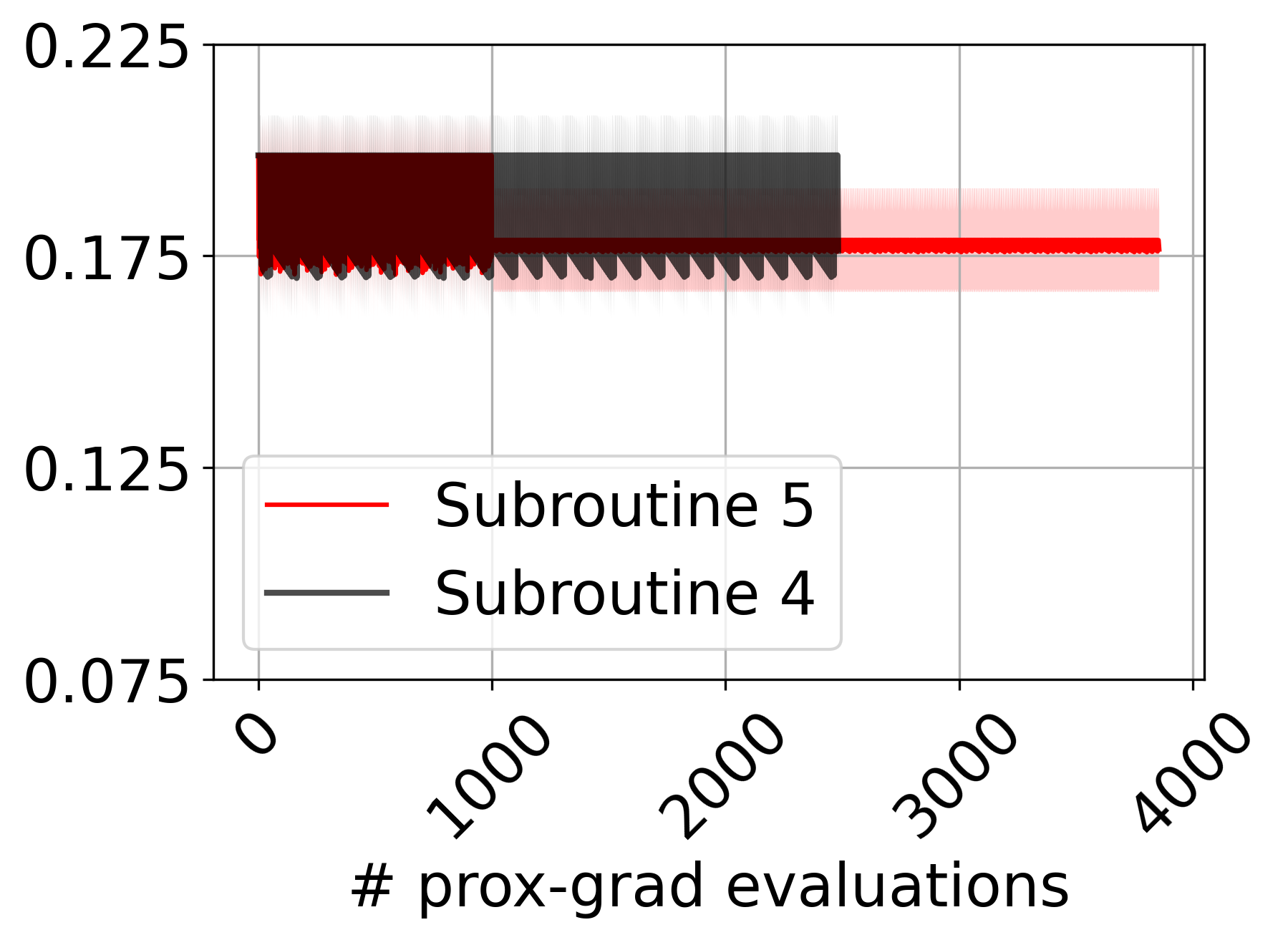}
    \caption{$\tau_0 = \frac{\sqrt{2\deltaB}}{\sqrt{\beta(1 - \lambda_{\min}(W))}}$}
    \label{fig:5.2-info-matrix-rel-t1}
\end{subfigure}
\hfill
\begin{subfigure}{0.32\textwidth}
    \includegraphics[width=\textwidth]{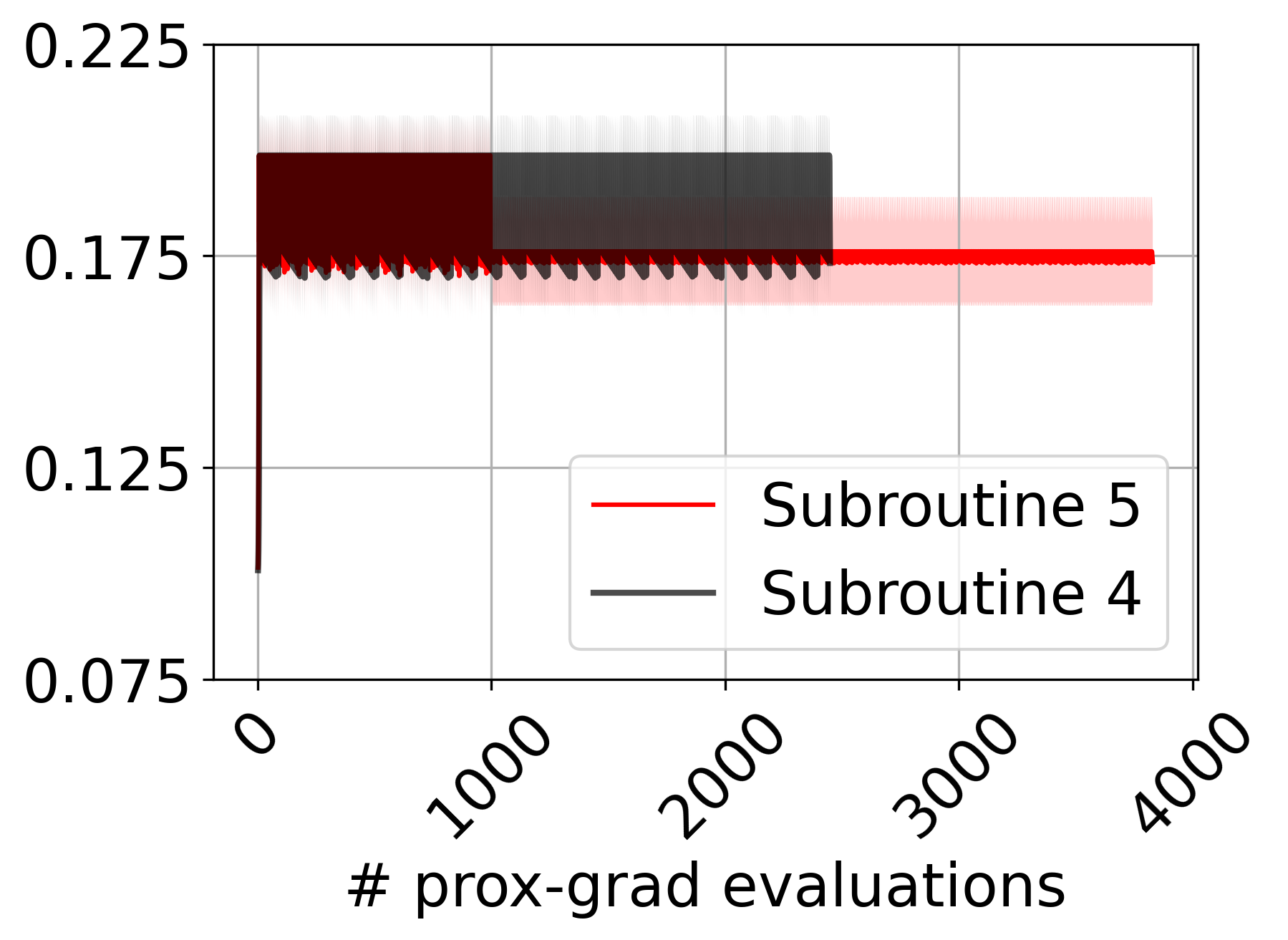}
    \caption{$\tau_0 = 0.1\cdot\frac{\sqrt{2\deltaB}}{\sqrt{\beta(1 - \lambda_{\min}(W))}}$}
    \label{fig:5.2-info-matrix-rel-t2}
\end{subfigure} 
\hfill
\begin{subfigure}{0.32\textwidth}
    \includegraphics[width=\textwidth]{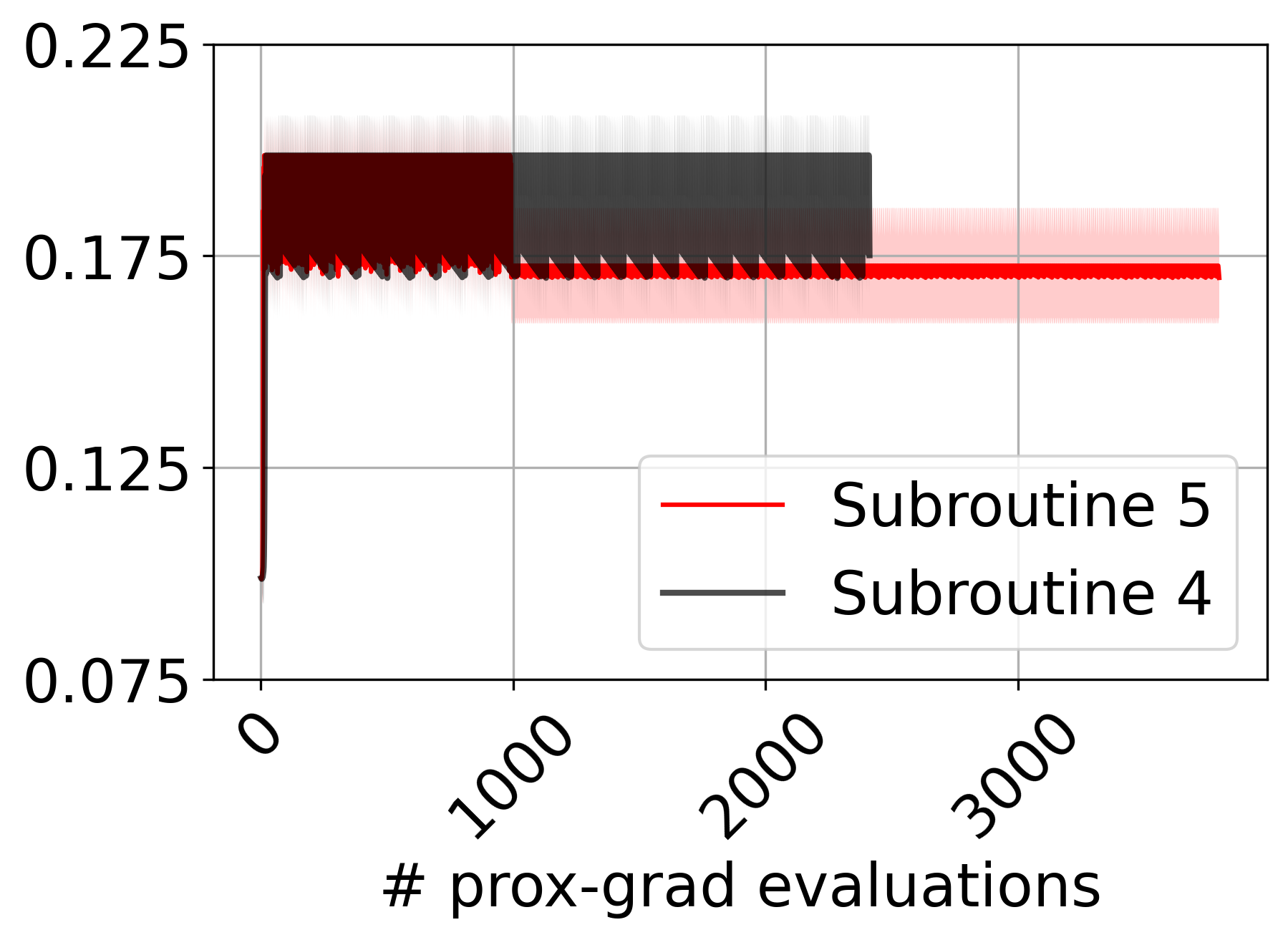}
    \caption{$\tau_0 = 0.01\cdot\frac{\sqrt{2\deltaB}}{\sqrt{\beta(1 - \lambda_{\min}(W))}}$}
    \label{fig:5.2-info-matrix-rel-t3}
\end{subfigure} 
\caption{Mean relative error $\frac{\|x_i^k - x^*\|}{\|x_i^0 - x^*\|}$ and standard deviation across agents as a function of the number of proximal-gradient evaluations, for different initial stepsizes.}
\label{fig:5.2-rel-1}
\end{figure}


\begin{figure}
\centering
\begin{subfigure}{0.32\textwidth}
    \includegraphics[width=\textwidth]{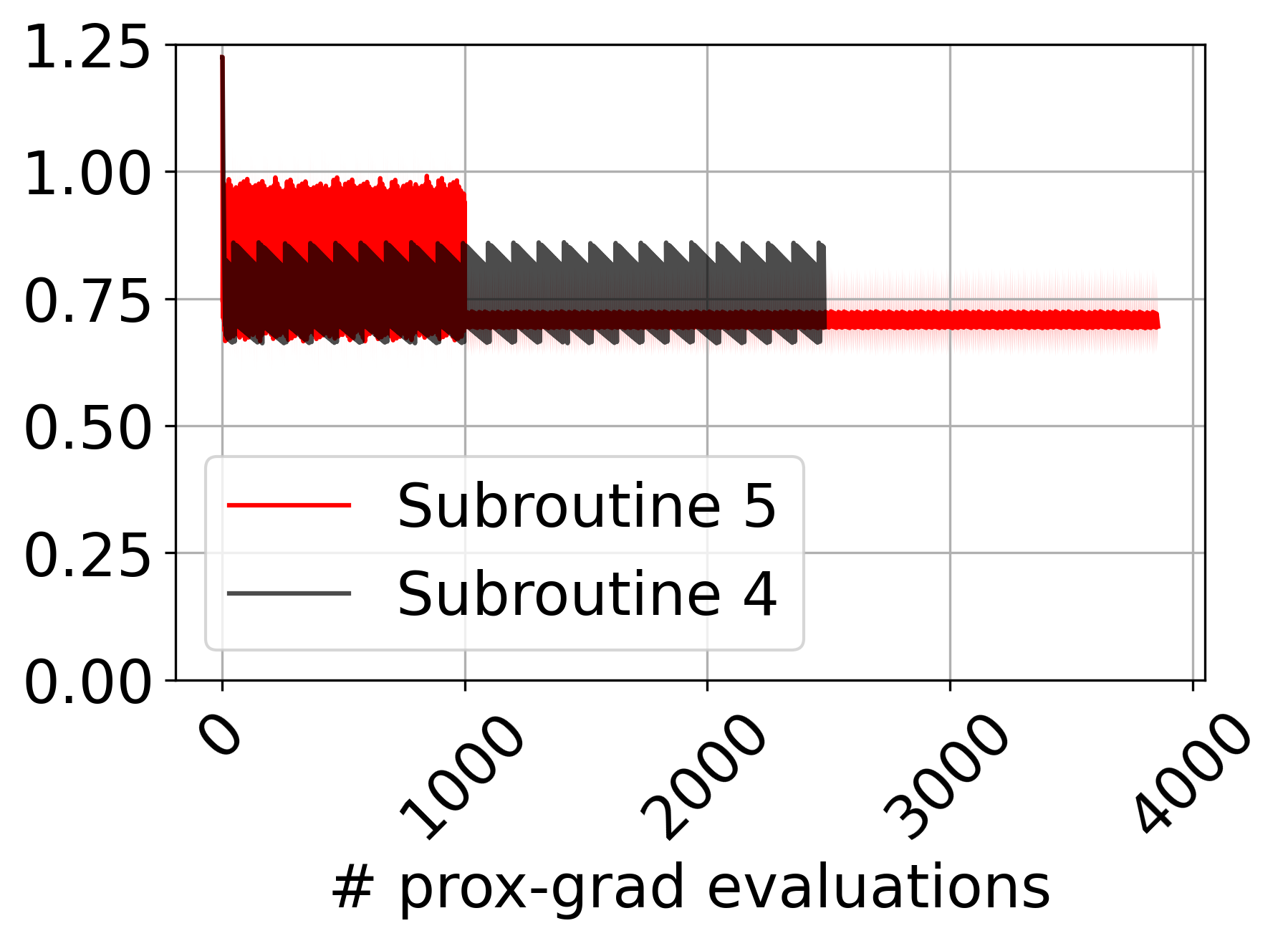}
    \caption{$\tau_0 = \frac{\sqrt{2\deltaB}}{\sqrt{\beta(1 - \lambda_{\min}(W))}}$}
    \label{fig:5.2-info-matrix-step-t1}
\end{subfigure}
\hfill
\begin{subfigure}{0.32\textwidth}
    \includegraphics[width=\textwidth]{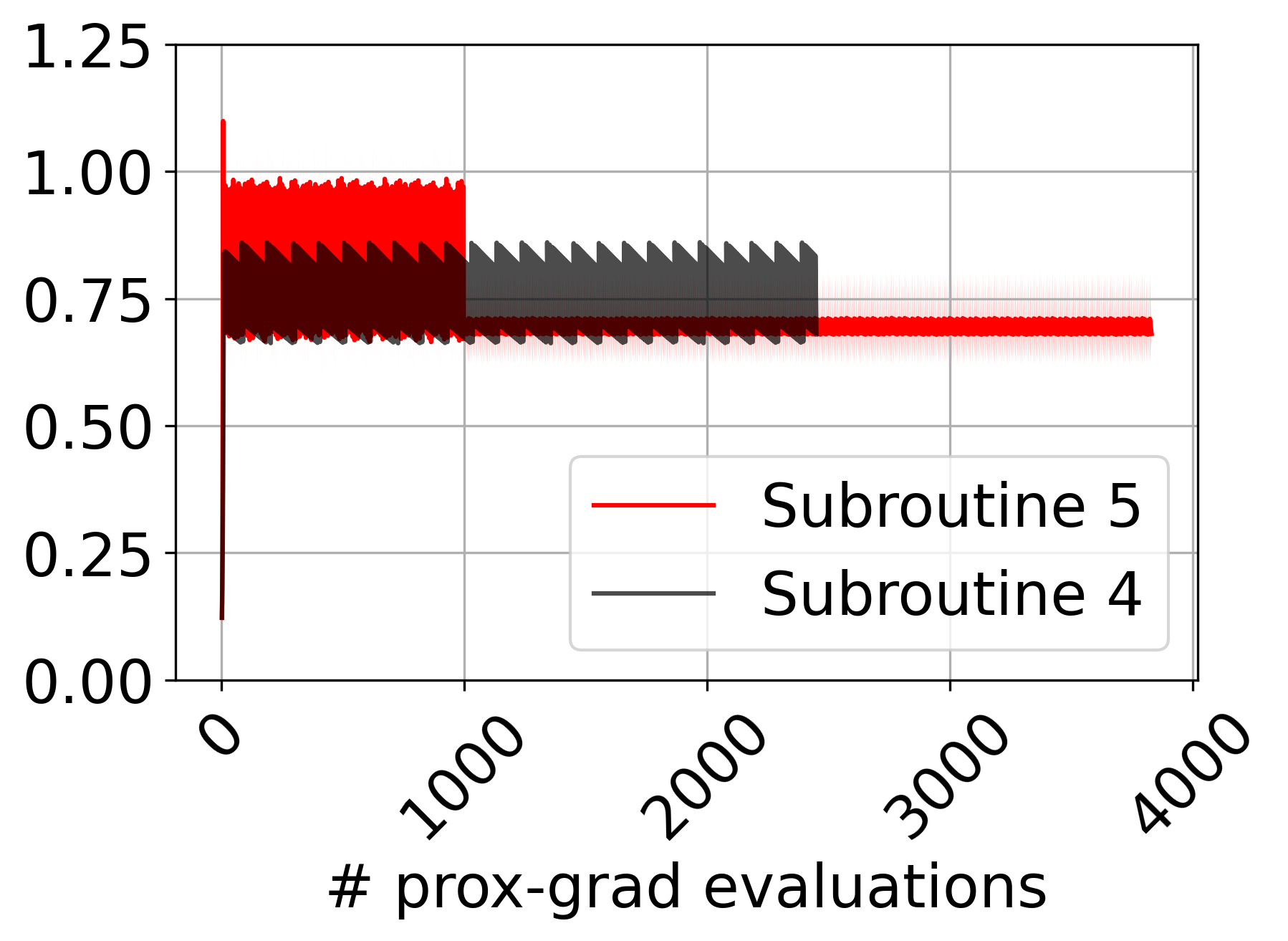}
    \caption{$\tau_0 = 0.1\cdot\frac{\sqrt{2\deltaB}}{\sqrt{\beta(1 - \lambda_{\min}(W))}}$}
    \label{fig:5.2-info-matrix-step-t2}
\end{subfigure} 
\hfill
\begin{subfigure}{0.32\textwidth}
    \includegraphics[width=\textwidth]{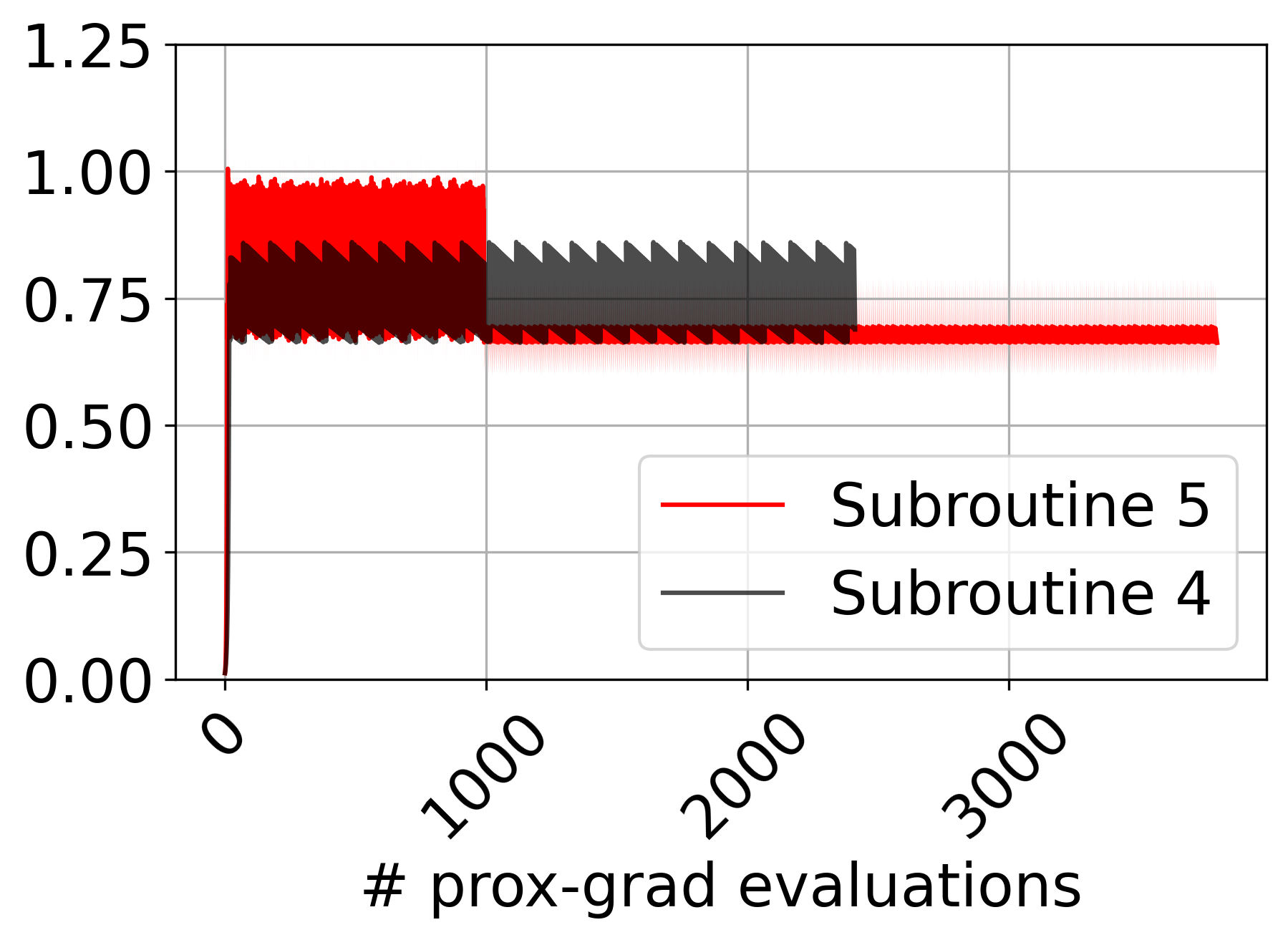}
    \caption{$\tau_0 = 0.01\cdot\frac{\sqrt{2\deltaB}}{\sqrt{\beta(1 - \lambda_{\min}(W))}}$}
    \label{fig:5.2-info-matrix-step-t3}
\end{subfigure} 
\caption{(Mean) stepsize and standard deviation across agents as a function of the number of proximal-gradient evaluations, for different initial stepsizes.}
\label{fig:5.2-step-1}
\end{figure}
}

\section{Concluding remarks}
\label{s:conclusion}

In this work, we propose an abstract linesearch framework for a primal-dual splitting method to solve min-max convex concave problems, that can be viewed as a generalisation of the method proposed in 
 \cite{malitsky2018first}, and we provide an implementation of such a linesearch that naturally satisfies the assumptions. In the distributed optimisation setting, we propose two distributed linesearch procedures, in such a way that the resulting method extends PG-EXTRA defining variable stepsizes. We also allow the gradients $\nabla h_i$ to be locally Lipschitz continuous for each agent $i \in \{1, \dots, n\}$, instead of using the usual assumption of globally Lipschitz gradients. {A natural extension of our approach is allowing the agents to perform proximal-gradient steps with independent stepsizes, without the need of additional rounds of communication in order to reach consensus on the stepsize in each iteration. Such a method would correspond to an extension of NIDS \cite{li2019decentralized}, a decentralised method for distributed optimisation that naturally allows constant mismatched stepsizes across agents. Devising a decentralised method that allows variable independent stepsizes for the agents is one of the planned future research directions of the authors.}



\noindent {\textbf{Acknowledgements.} We are grateful to the anonymous reviewers for their constructive feedback and valuable suggestions, which significantly contributed to the improvement of this manuscript.}

\noindent\textbf{Funding.} FA, MND and MKT were supported in part by Australian Research Council grant DP230101749.

\noindent\textbf{Conflict of interest.} The authors have no competing interests to declare that are relevant to the content of this article.

{\noindent\textbf{Data availability and replication of results.} The datasets and code used in this work to run the experiments are available at \href{https://github.com/fatenasm/distributed-reconstruction-ls}{github.com/fatenasm/distributed-reconstruction-ls}.}

\bibliographystyle{abbrv}
\bibliography{biblio}

\end{document}